\documentclass[a4paper, 11pt, reqno]{amsart}
\usepackage{mathrsfs}
\usepackage{amsfonts}
\usepackage[centertags]{amsmath}
\usepackage{amssymb}
\usepackage{amsthm}
\usepackage{float}
\usepackage{graphicx}
\usepackage{caption}
\usepackage{hyperref}
\usepackage{enumerate}
\usepackage[textwidth=16cm, hmarginratio=1:1]{geometry}
\usepackage{appendix}
\usepackage[all]{xy}
\usepackage{extarrows}
\usepackage{resizegather}

\newtheorem{theorem}{Theorem}[section]

\newtheorem{lemma}[theorem]{Lemma}

\newtheorem{definition}[theorem]{Definition}
\newtheorem{remark}[theorem]{Remark}
\newtheorem{example}[theorem]{Example}

\newtheorem{conjecture}[theorem]{Conjecture}

\numberwithin{equation}{section}

\DeclareMathOperator*{\rep}{Rep}
\DeclareMathOperator*{\diag}{diag}
\DeclareMathOperator*{\res}{Res}

\begin{document}

\title{On the minimal affinizations of type $F_4$}
\author{Bing Duan, Jian-Rong Li, Yan-Feng Luo}
\address{Bing Duan: School of Mathematics and Statistics, Lanzhou University, Lanzhou 730000, P. R. China.}
\email{duan890818@163.com}

\address{Jian-Rong Li: School of Mathematics and Statistics, Lanzhou University, Lanzhou 730000, P. R. China.}
\email{lijr@lzu.edu.cn}

\address{Yan-Feng Luo: School of Mathematics and Statistics, Lanzhou University, Lanzhou 730000, P. R. China.}
\email{luoyf@lzu.edu.cn}
\date{}

\maketitle

\begin{abstract}
In this paper, we apply the theory of cluster algebras to study minimal affinizations over the quantum affine algebra of type $F_4$. We show that the $q$-characters of a large family of minimal affinizations of type $F_4$ satisfy some equations. Moreover, every minimal affinization these equations corresponds to some cluster variable in some cluster algebra $\mathscr{A}$. For the other minimal affinizations of type $F_4$ which are not these equations, we give some conjectural equations which contains these minimal affinizations. Furthermore, we introduce the concept of dominant monomial graphs to study the equations satisfied by $q$-characters of modules of quantum affine algebras.

\hspace{0.15cm}

\noindent
{\bf Key words}: quantum affine algebra of type $F_4$; minimal affinizations; cluster algebras; $q$-characters; Frenkel-Mukhin algorithm

\hspace{0.15cm}

\noindent
{\bf 2010 Mathematics Subject Classification}: 17B37; 13B60
\end{abstract}

\section{Introduction}
The theory of cluster algebras are introduced by Fomin and Zelevinsky in \cite{FZ02}. It has many applications to mathematics and physics including quiver representations, Teichm\"{u}ller theory, tropical geometry, integrable systems, and Poisson geometry.

Let $\mathfrak{g}$ be a simple Lie algebra and $U_q \widehat{\mathfrak{g}}$ the corresponding quantum affine algebra.
In \cite{C95}, V. Chari and A. Pressley introduced minimal affinizations of representations of quantum groups. The family of minimal affinizations is an important family of simple modules which contains the Kirillov-Reshetikhin modules. Minimal affinizations are studied intensively in recent years, see for examples, \cite{CG11}, \cite{CMY13}, \cite{Her07}, \cite{Li15}, \cite{LM13}, \cite{M10}, \cite{MP07}, \cite{MP11}, \cite{MY12a}, \cite{MY12b}, \cite{MY14}, \cite{Nao13}, \cite{Nao14}, \cite{QL14}, \cite{SS14}, \cite{ZDLL15}. However, there is not much work of the minimal affinizations over the quantum affine algebra of type $F_{4}$ in the literature. The aim of this paper is to apply the theory of cluster algebras to study minimal affinizations over the quantum affine algebra of type $F_4$.

M-systems and dual M-systems of types $A_{n}$, $B_{n}$, $G_{2}$ are introduced in \cite{ZDLL15}, \cite{QL14} to study the minimal affinizations of types $A_{n}$, $B_{n}$, $G_{2}$. The equations in these systems are satisfied by the $q$-characters of minimal affinizations of types $A_{n}$, $B_{n}$, $G_{2}$. It is shown that every equation in these systems corresponds to a mutation equation in some cluster algebra.

The minimal affinizations of type $F_4$ are much more complicated than the minimal affinizations of types $A_{n}$, $B_{n}$, $G_{2}$. In types $A_{n}$, $B_{n}$, $G_{2}$, all minimal affinizations are special or anti-special, \cite{Her07}, \cite{LM13}. A $U_q \widehat{\mathfrak{g}}$-module $V$ is called special (resp. anti-special) if there is only one dominant (resp. anti-dominant) monomial in the $q$-character of $V$. In the case of type $F_4$, there are minimal affinizations which are neither special nor anti-special, \cite{Her07}.

In \cite{ZDLL15}, \cite{QL14}, the M-systems and dual M-systems of types $A_{n}$, $B_{n}$, $G_{2}$ contain all minimal affinizations and only contain minimal affinizations. The situation is different in the case of type $F_4$. It is quite possible that a closed system which contains all minimal affinizations and only contains minimal affinizations of type $F_4$ does not exist. However, we are able to find two closed systems which contain a large family of minimal affinizations of type $F_4$, Theorem \ref{M-system of type F4}, Theorem \ref{The dual M-system}. We show that the equations these systems are satisfied by the $q$-characters of the minimal affinizations in the systems. We prove that the modules in one system are special, Theorem \ref{special}, and the modules in the other system are anti-special, Theorem \ref{anti-special}. Moreover, we show that every equation in Theorem \ref{M-system of type F4} (resp. Theorem \ref{The dual M-system}) corresponds to a mutation equation in some cluster algebra $\mathscr{A}$ (resp. $\mathscr{\widetilde{A}}$). The cluster algebra $\mathscr{A}$ is the same as the cluster algebra for the quantum affine algebra of type $F_4$ introduced in \cite{HL13}. Moreover, every minimal affinization in Theorem \ref{M-system of type F4} (resp. Theorem \ref{The dual M-system}) corresponds to a cluster variable in the cluster algebra $\mathscr{A}$ (resp. $\mathscr{\widetilde{A}}$), Theorem \ref{connection with cluster algebra 1} (resp. Theorem \ref{minimal affinizations correspond to cluster variablesII}).

The system of equations in Theorem \ref{The dual M-system} is dual to the system in Theorem \ref{M-system of type F4}. This system contains the modules which are dual to the modules in the system in Theorem \ref{M-system of type F4}.

For the minimal affinizations which are not in Theorem \ref{M-system of type F4} and Theorem \ref{The dual M-system}, we give some conjectural equations which contain these modules, Conjecture \ref{the conjecture 1}.

We introduce the concept of dominant monomial graphs to study the equations satisfied by $q$-characters of modules of quantum affine algebras. We draw dominant monomial graphs for the modules in the equivalence classes of the left hand side of some equations in Conjecture \ref{the conjecture 1}. In these graphs, we find that every graph can be divided into two parts. The vertices in the first (resp. second) part of the graph are dominant monomials in the first (resp. second) summand of the right hand side of the correpsonding equation.

The paper is organized as follows. In Section \ref{Background}, we give some background information about cluster algebras and representation theory of quantum affine algebras. In Section \ref{main results}, we describe a closed system containing a large family of minimal affinizations of type $F_4$. In Section \ref{Relation between the M-systems and cluster algebras}, we study relations between the system in Theorem \ref{M-system of type F4} and cluster algebras. In Section \ref{dual M system}, we study the dual system of Theorem \ref{M-system of type F4}. In Section \ref{proof of special}, Section \ref{proof main1}, and Section \ref{proof irreducible}, we prove Theorem \ref{special}, Theorem \ref{M-system of type F4}, and Theorem \ref{irreducible} given in Section \ref{main results}, respectively. In Section \ref{conjectural equations for F_4}, we give a conjecture about the equations satisfied by the $q$-characters of the other minimal affinizations of type $F_4$ and introduce the concept of dominant monomial graphs to study the conjecture.

\section{Background}\label{Background}
\subsection{Cluster algebras}
We first recall the definition of cluster algebras introduced by Fomin and Zelevinsky in \cite{FZ02}. Let $\mathbb{Q}$ be the rational field and $\mathcal{F}=\mathbb{Q}(x_{1}, x_{2}, \cdots, x_{n})$ the field of rational functions in $n$ indeterminates over $\mathbb{Q}$.
A seed in $\mathcal{F}$ is a pair $\Sigma=({\bf y}, Q)$, where ${\bf y} = (y_{1}, y_{2}, \cdots, y_{n})$ is a free generating set of $\mathcal{F}$, and $Q$ is a quiver with vertices labeled by $\{1, 2, \cdots, n\}$. Assume that $Q$ has neither loops nor $2$-cycles. For $k\in \{1, 2, \cdots, n\}$, one defines a new seed $\mu_k({\bf y}, Q) = ({\bf y}', Q')$ by the mutation of $({\bf y}, Q)$ at $k$. Here ${\bf y}' = (y_1', \ldots, y_n')$, $y_{i}'=y_{i}$, for $i\neq k$, and
\begin{equation}
y_{k}'=\frac{\prod_{i\rightarrow k} y_{i}+\prod_{k\rightarrow j} y_{j}}{y_{k}}, \label{exchange relation}
\end{equation}
where the first (resp. second) product in the right hand side is over all arrows of $Q$ with target (resp. source) $k$, and $Q'$ is obtained from $Q$ by the follow rule:
\begin{enumerate}
\item[(i)] Reverse the orientations of all arrow incident with $k$;

\item[(ii)] Add a new arrow $i\rightarrow j$ for every existing pair of arrow $i\rightarrow k$ and $k\rightarrow j$;

\item[(iii)] Erasing every pair of opposite arrows possible created by (ii).
\end{enumerate}
The mutation class $\mathcal{C}(\Sigma)$ is the set of all seeds obtained from $\Sigma$ by a finite sequence of mutation $\mu_{k}$. If $\Sigma'=((y_{1}', y_{2}', \cdots, y_{n}'), Q')$ is a seed in $\mathcal{C}(\Sigma)$, then the subset $\{y_{1}', y_{2}', \cdots, y_{n}'\}$ is called a $cluster$, and its elements are called \textit{cluster variables}. The \textit{cluster algebra} $\mathscr{A}_{\Sigma}$ as the subring of $\mathcal{F}$ generated by all cluster variables. \textit{Cluster monomials} are monomials in the cluster variables supported on a single cluster.

In this paper, the initial seed in the cluster algebra we use is of the form $\Sigma=({\bf y}, Q)$, where ${\bf y}$ is an infinite set and $Q$ is an infinite quiver.

\begin{definition}[{Definition 3.1, \cite{GG14}}] \label{definition of cluster algebras of infinite rank}
Let $Q$ be a quiver without loops or $2$-cycles and with a countably infinite number of vertices labelled by all integers $i \in \mathbb{Z}$. Furthermore, for each vertex $i$ of $Q$ let the
number of arrows incident with $i$ be finite. Let ${\bf y} = \{y_i \mid i \in \mathbb{Z}\}$. An infinite initial seed is the pair $({\bf y}, Q)$. By finite sequences of mutation
at vertices of $Q$ and simultaneous mutation of the set ${\bf y}$ using the exchange relation (\ref{exchange relation}), one obtains a family of infinite seeds. The sets of variables in these seeds are called
the infinite clusters and their elements are called the cluster variables. The cluster algebra of
infinite rank of type $Q$ is the subalgebra of $\mathbb{Q}({\bf y})$ generated by the cluster variables.
\end{definition}

\subsection{The quantum affine algebra of type $F_4$}
In this paper, we take $\mathfrak{g}$ to be the complex simple Lie algebra of type $F_4$ and $\mathfrak{h}$ a Cartan subalgebra of $\mathfrak{g}$. Let $I=\{1, 2, 3, 4\}$.
We choose simple roots $\alpha_1, \alpha_2, \alpha_3, \alpha_4$ and scalar product $(\cdot, \cdot)$ such that
\begin{align*}
&( \alpha_1, \alpha_1 ) =2, \ ( \alpha_1, \alpha_2 )=-1,    \ ( \alpha_2, \alpha_2 )=2,  \ ( \alpha_2, \alpha_3 )=-2, \\
&( \alpha_3, \alpha_3 ) =4,  \ ( \alpha_3, \alpha_4 )=-2,    \ ( \alpha_4, \alpha_4 )=4.
\end{align*}
Therefore $\alpha_1, \alpha_2$ are the short simple roots and $\alpha_3, \alpha_4$ are the long simple roots.

Let $\{\alpha_1^{\vee}, \alpha_2^{\vee}, \alpha_3^{\vee}, \alpha_4^{\vee} \}$ and $\{\omega_1, \omega_2, \omega_3, \omega_4\}$ be the sets of simple coroots and fundamental weights respectively. Let $C=(C_{ij})_{i, j\in I}$ denote the Cartan matrix, where $C_{ij}=\frac{2 ( \alpha_i, \alpha_j ) }{( \alpha_i, \alpha_i )}$. Let $d_1=1, d_2=1, d_3=2, d_4=2$, $D=\diag (d_1, d_2, d_3, d_4)$ and $B=DC=(b_{ij})_{i,j \in I}$.
Then
\begin{align*}
C=
\left(
\begin{array}{cccc}
2 & -1 & 0 & 0\\
-1 & 2 & -2 & 0\\
0 & -1 & 2 & -1 \\
0 & 0 & -1 & 2
\end{array}
\right), \quad
B=
\left(
\begin{array}{cccc}
2 & -1 & 0 & 0\\
-1 & 2 & -2 & 0\\
0 & -2 & 4 & -2 \\
0 & 0 & -2 & 4
\end{array}
\right).
\end{align*}
Let $q_i=q^{d_i}$, $i\in I$. Let $Q$ (resp. $Q^+$) and $P$ (resp. $P^+$) denote the $\mathbb{Z}$-span (resp. $\mathbb{Z}_{\geq 0}$-span) of the simple roots and fundamental weights respectively. Let $\leq$ be the partial order on $P$ in which $\lambda \leq \lambda'$ if and only if $\lambda' - \lambda \in Q^+$.

Quantum groups are introduced independently by Jimbo \cite{Jim85} and Drinfeld \cite{Dri87}. Quantum affine algebras form a family of infinite-dimensional quantum groups. Let $\widehat{\mathfrak{g}}$ denote the untwisted affine algebra corresponding to $\mathfrak{g}$. In this paper, we fix a $q\in \mathbb{C}^{\times}$, not a root of unity. The quantum affine algebra $U_q\widehat{\mathfrak{g}}$ in Drinfeld's new realization, see \cite{Dri88}, is generated by $x_{i, n}^{\pm}$ ($i\in I, n\in \mathbb{Z}$), $k_i^{\pm 1}$ $(i\in I)$, $h_{i, n}$ ($i\in I, n\in \mathbb{Z}\backslash \{0\}$) and central elements $c^{\pm 1/2}$, subject to certain relations.

The algebra $U_q\mathfrak{g}$ is isomorphic to a subalgebra of $U_q\widehat{\mathfrak{g}}$. Therefore $U_q\widehat{\mathfrak{g}}$-modules restrict to $U_q\mathfrak{g}$-modules.

\subsection{Finite-dimensional $U_q \widehat{\mathfrak{g}}$-modules and their $q$-characters} We recall some known results on finite-dimensional  $U_q\widehat{\mathfrak{g}}$-modules and their $q$-characters, \cite{CP94}, \cite{CP95a}, \cite{FR98}, \cite{MY12a}.

Let $\mathcal{P}$ be the free abelian multiplicative group of monomials in infinitely many formal variables $(Y_{i, a})_{i\in I, a\in \mathbb{C}^{\times}}$. Then $\mathbb{Z}\mathcal{P} = \mathbb{Z}[Y_{i, a}^{\pm 1}]_{i\in I, a\in \mathbb{C}^{\times}}$. For each $j\in I$, a monomial $m=\prod_{i\in I, a\in \mathbb{C}^{\times}} Y_{i, a}^{u_{i, a}}$, where $u_{i, a}$ are some integers, is said to be \textit{$j$-dominant} (resp. \textit{$j$-anti-dominant}) if and only if $u_{j, a} \geq 0$ (resp. $u_{j, a} \leq 0$) for all $a\in \mathbb{C}^{\times}$. A monomial is called \textit{dominant} (resp. \textit{anti-dominant}) if and only if it is $j$-dominant (resp. $j$-anti-dominant) for all $j\in I$.

Every finite-dimensional simple $U_{q}\mathfrak{\widehat{g}}$-module is parametrized by a dominant monomial in $\mathcal{P}^+$, \cite{CP94}, \cite{CP95a}. That is, for a dominant monomial $m=\prod_{i\in I, a \in \mathbb{C}^{\times}}Y_{i,a}^{u_{i,a}}$, there is a corresponding simple $U_{q}\mathfrak{\widehat{g}}$-module $L(m)$. Let $\rep(U_q\widehat{\mathfrak{g}})$ be the Grothendieck ring of finite-dimensional $U_q\widehat{\mathfrak{g}}$-modules and $[V]\in \rep(U_q\widehat{\mathfrak{g}})$ the class of a finite-dimensional $U_q\widehat{\mathfrak{g}}$-module $V$.

The $q$-character of a $U_q\widehat{\mathfrak{g}}$-module $V$ is given by
\begin{align*}
\chi_q(V) = \sum_{m\in \mathcal{P}} \dim(V_{m}) m \in \mathbb{Z}\mathcal{P},
\end{align*}
where $V_m$ is the $l$-weight space with $l$-weight $m$, see \cite{FR98}. We use $\mathscr{M}(V)$ to denote the set of all monomials in $\chi_q(V)$ for a  finite-dimensional $U_q\widehat{\mathfrak{g}}$-module $V$. Let $\mathcal{P}^+ \subset \mathcal{P}$ denote the set of all dominant monomials. For $m_+ \in \mathcal{P}^+$, we use $\chi_q(m_+)$ to denote $\chi_q(L(m_+))$. We also write $m \in \chi_q(m_+)$ if $m \in \mathscr{M}(\chi_q(m_+))$.

The following lemma is well-known.
\begin{lemma}\label{well-known}
Let $m_1, m_2$ be two monomials. Then $L(m_1m_2)$ is a sub-quotient of $L(m_1) \otimes L(m_2)$. In particular, $\mathscr{M}(L(m_1m_2)) \subseteq \mathscr{M}(L(m_1))\mathscr{M}(L(m_2))$.   $\Box$
\end{lemma}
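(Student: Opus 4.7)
The plan is to reduce the assertion to the classical highest $\ell$-weight theory for $U_q\widehat{\mathfrak{g}}$. First, let $v_1 \in L(m_1)$ and $v_2 \in L(m_2)$ be highest $\ell$-weight vectors, so each $v_i$ is annihilated by all $x_{j,n}^+$ and is a joint eigenvector for the Cartan--Drinfeld generators with eigenvalues encoded by the dominant monomial $m_i$. Using the Drinfeld (or equivalently the standard) comultiplication, I would check the two standard facts: (i) $x_{j,n}^+ \cdot (v_1 \otimes v_2) = 0$ for every $j \in I$ and $n \in \mathbb{Z}$, modulo terms lying in strictly lower weight spaces that vanish on $v_1 \otimes v_2$ because $v_1$ is already top; and (ii) the Cartan--Drinfeld generators act on $v_1 \otimes v_2$ by the product of their eigenvalues on $v_1$ and $v_2$, which is precisely the eigenvalue encoded by $m_1 m_2$. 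Thus $v_1 \otimes v_2$ is a highest $\ell$-weight vector of $L(m_1)\otimes L(m_2)$ with $\ell$-weight $m_1 m_2$.

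Next I would invoke the Chari--Pressley classification: the cyclic submodule $W := U_q\widehat{\mathfrak{g}} \cdot (v_1\otimes v_2) \subseteq L(m_1)\otimes L(m_2)$ is a highest $\ell$-weight module with highest $\ell$-weight $m_1 m_2$, and any such module has a unique maximal proper submodule $W'$; the quotient $W/W'$ is therefore isomorphic to $L(m_1 m_2)$. This exhibits $L(m_1 m_2)$ as a subquotient of $L(m_1)\otimes L(m_2)$, establishing the first assertion.

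For the monomial containment, I would use that $\chi_q$ descends to a ring homomorphism on $\rep(U_q\widehat{\mathfrak{g}})$ that is additive on short exact sequences (\cite{FR98}). For any submodule $U$ of a finite-dimensional module $V$ one has $\chi_q(V) = \chi_q(U) + \chi_q(V/U)$, and all coefficients $\dim(V_m)$ are non-negative, so $\mathscr{M}(U) \subseteq \mathscr{M}(V)$ and $\mathscr{M}(V/U) \subseteq \mathscr{M}(V)$; iterating, $\mathscr{M}(S) \subseteq \mathscr{M}(V)$ for any subquotient $S$. Applying this with $V = L(m_1)\otimes L(m_2)$ and $S = L(m_1 m_2)$, and using the multiplicativity $\chi_q(L(m_1)\otimes L(m_2)) = \chi_q(L(m_1))\chi_q(L(m_2))$, yields
\[
\mathscr{M}(L(m_1 m_2)) \subseteq \mathscr{M}(L(m_1)\otimes L(m_2)) = \mathscr{M}(L(m_1))\,\mathscr{M}(L(m_2)),
\]
as required.

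The only step requiring genuine care is the first, verifying that $v_1\otimes v_2$ is a highest $\ell$-weight vector with the correct $\ell$-weight, since this depends on the explicit coproduct formulas in Drinfeld's new realization; everything else is a direct consequence of the classification of simple integrable highest $\ell$-weight modules and the basic formalism of $q$-characters.
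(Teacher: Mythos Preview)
The paper states this lemma as well-known and gives no proof (the $\Box$ at the end of the statement indicates it is asserted without argument). Your outline is the standard one and is correct: the tensor of highest $\ell$-weight vectors is again a highest $\ell$-weight vector with $\ell$-weight $m_1m_2$, the cyclic submodule it generates has $L(m_1m_2)$ as its unique simple quotient, and the $q$-character inclusion follows from additivity on short exact sequences together with multiplicativity on tensor products.

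One small comment on presentation: your phrasing of step~(i) is slightly muddled. The cleanest justification that $x_{j,n}^+\cdot(v_1\otimes v_2)=0$ does not require any coproduct computation at all: each $x_{j,n}^+$ raises the $U_q\mathfrak{g}$-weight by $\alpha_j$, and $v_1\otimes v_2$ already sits in the top $U_q\mathfrak{g}$-weight space of the tensor product, so the image is forced to be zero. The genuine coproduct input is only needed in step~(ii), to see that the $\phi^\pm_{i,r}$ act on $v_1\otimes v_2$ by the product of their eigenvalues (the correction terms in $\Delta(\phi^\pm_{i,r})$ involve raising operators on one tensor factor and hence vanish on $v_1\otimes v_2$); this is the computation carried out in \cite{CP94,CP95a}.
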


A finite-dimensional $U_q\widehat{\mathfrak{g}}$-module $V$ is said to be \textit{special} if and only if $\mathscr{M}(V)$ contains exactly one dominant monomial. It is called \textit{anti-special} if and only if $\mathscr{M}(V)$ contains exactly one anti-dominant monomial. Clearly, if a module is special or anti-special, then it is simple.

Let $a\in \mathbb{C}^{\times}$ and
\begin{align}\label{eqution a}
\begin{split}
&A_{1, a} = Y_{1, aq}Y_{1, aq^{-1}} Y_{2, a}^{-1}, \\
&A_{2, a} = Y_{2, aq^{1}}Y_{2, aq^{-1}} Y_{1, a}^{-1} Y_{3, a}^{-1},\\
&A_{3, a} = Y_{3, aq^{2}}Y_{3, aq^{-2}} Y_{4, a}^{-1} Y_{2, aq^{1}}^{-1}Y_{2, aq^{-1}}^{-1},\\
&A_{4, a} = Y_{4, aq^{2}}Y_{4, aq^{-2}} Y_{3, a}^{-1}.
\end{split}
\end{align}
Let $\mathcal{Q}$ be the subgroup of $\mathcal{P}$ generated by $A_{i, a}, i\in I, a\in \mathbb{C}^{\times}$. Let $\mathcal{Q}^{\pm}$ be the monoids generated by $A_{i, a}^{\pm 1}, i\in I, a\in \mathbb{C}^{\times}$. There is a partial order $\leq$ on $\mathcal{P}$ in which
\begin{align}
m\leq m' \text{ if and only if } m'm^{-1}\in \mathcal{Q}^{+}. \label{partial order of monomials}
\end{align}
For all $m_+ \in \mathcal{P}^+$, $\mathscr{M}(L(m_+)) \subset m_+\mathcal{Q}^{-}$, see \cite{FM01}.

The concept of \textit{right negative} is introduced in Section 6 of \cite{FM01}.
\begin{definition}
A monomial m is called \textit{right negative} if for all $a \in \mathbb{C}^{\times}$, for $L= \max\{l\in \mathbb{Z}\mid u_{i,aq^{l}}(m)\neq 0 \text{ for some i $\in I$}\}$ we have $u_{j,aq^{L}}(m)\leq 0$ for $j\in I$.
\end{definition}
For $i\in I, a\in \mathbb{C}^{\times}$, $A_{i,a}^{-1}$ is right-negative. A product of right-negative monomials is right-negative. If $m$ is right-negative and $m'\leq m$, then $m'$ is right-negative, see \cite{FM01}, \cite{Her06}.

\subsection{$q$-characters of $U_q \widehat{\mathfrak{sl}}_2$-modules and the Frenkel-Mukhin algorithm}
We recall the results of the $q$-characters of $U_q \widehat{\mathfrak{sl}}_2$-modules which are well-understood, see \cite{CP91}, \cite{FR98}.

Let $W_{k}^{(a)}$ be the irreducible representation $U_q \widehat{\mathfrak{sl}}_2$ with
highest weight monomial
\begin{align*}
X_{k}^{(a)}=\prod_{i=0}^{k-1} Y_{aq^{k-2i-1}},
\end{align*}
where $Y_a=Y_{1, a}$. Then the $q$-character of $W_{k}^{(a)}$ is given by
\begin{align*}
\chi_q(W_{k}^{(a)})=X_{k}^{(a)} \sum_{i=0}^{k} \prod_{j=0}^{i-1} A_{aq^{k-2j}}^{-1},
\end{align*}
where $A_a=Y_{aq^{-1}}Y_{aq}$.

For $a\in \mathbb{C}^{\times}, k\in \mathbb{Z}_{\geq 1}$, the set $\Sigma_{k}^{(a)}=\{aq^{k-2i-1}\}_{i=0, \ldots, k-1}$ is called a \textit{string}. Two strings $\Sigma_{k}^{(a)}$ and $\Sigma_{k'}^{(a')}$ are said to be in \textit{general position} if the union $\Sigma_{k}^{(a)} \cup \Sigma_{k'}^{(a')}$ is not a string or $\Sigma_{k}^{(a)} \subset \Sigma_{k'}^{(a')}$ or $\Sigma_{k'}^{(a')} \subset \Sigma_{k}^{(a)}$.

Denote by $L(m_+)$ the irreducible $U_q \widehat{\mathfrak{sl}}_2$-module with
highest weight monomial $m_+$. Let $m_{+} \neq 1$ and $\in \mathbb{Z}[Y_a]_{a\in \mathbb{C}^{\times}}$ be a dominant monomial. Then $m_+$ can be uniquely (up to permutation) written in the form
\begin{align*}
m_+=\prod_{i=1}^{s} \left( \prod_{b\in \Sigma_{k_i}^{(a_i)}} Y_{b} \right),
\end{align*}
where $s$ is an integer, $\Sigma_{k_i}^{(a_i)}, i=1, \ldots, s$, are strings which are pairwise in general position and
\begin{align*}
L(m_+)=\bigotimes_{i=1}^s W_{k_i}^{(a_i)}, \qquad \chi_q(L(m_+))=\prod_{i=1}^s
 \chi_q(W_{k_i}^{(a_i)}).
\end{align*}

For $j\in I$, let
\begin{align*}
\beta_j : \mathbb{Z}[Y_{i, a}^{\pm 1}]_{i\in I; a\in \mathbb{C}^{\times}} \to \mathbb{Z}[Y_{a}^{\pm 1}]_{a\in \mathbb{C}^{\times}}
\end{align*}
be the ring homomorphism such that for all $a\in \mathbb{C}^{\times}$, $Y_{k, a} \mapsto 1$ for $k\neq j$ and $Y_{j, a} \mapsto Y_{a}$.

Let $V$ be a $U_q \widehat{\mathfrak{g}}$-module. Then $\beta_i(\chi_q(V))$, $i\in I$, is the $q$-character of $V$ considered as a $U_{q_i} \widehat{\mathfrak{sl}_2} $-module.

The Frenkel-Mukhin algorithm is introduced in Section 5 in \cite{FM01} to compute the $q$-characters of $U_q \widehat{\mathfrak{g}}$-modules. In Theorem 5.9 of \cite{FM01}, it is shown that the Frenkel-Mukhin algorithm works for modules which are special.

\subsection{Truncated $q$-characters}
In this paper, we need to use the concept truncated $q$-characters, see \cite{HL10}, \cite{MY12a}.
Given a set of monomials $\mathcal{R} \subset \mathcal{P}$, let $\mathbb{Z}\mathcal{R} \subset \mathbb{Z}\mathcal{P}$ denote the $\mathbb{Z}$-module of formal linear combinations of elements of $\mathcal{R}$ with integer coefficients. Define
\begin{align*}
\text{trunc}_{\mathcal{R}}: \mathcal{P} \to \mathcal{R}; \quad m\mapsto \begin{cases} m & \text{if } m\in \mathcal{R}, \\ 0 & \text{if } m \not\in \mathcal{R}, \end{cases}
\end{align*}
and extend $\text{trunc}_{\mathcal{R}}$ as a $\mathbb{Z}$-module map $\mathbb{Z}\mathcal{P} \to \mathbb{Z}\mathcal{R}$.

Given a subset $U\subset I \times \mathbb{C}^{\times}$, let $\mathcal{Q}_U$ be the subgroups of $\mathcal{Q}$ generated by $A_{i,a}$ with $(i,a)\in U$. Let $\mathcal{Q}_U^{\pm}$ be the monoid generated by $A_{i,a}^{\pm 1}$ with $(i,a)\in U$. The polynomial $\text{trunc}_{m_{+}\mathcal{Q}_U^{-}} \: \chi_q(m_{+})$ is called \textit{the $q$-character of $L(m_{+})$ truncated to $U$}.

The following theorem can be used to compute some truncated $q$-characters.

\begin{theorem}[{  \cite{MY12a} }, Theorem 2.1 ]  \label{truncated}
Let $U\subset I \times \mathbb{C}^{\times}$ and $m_{+} \in \mathcal{P}^+$. Suppose that $\mathcal{M} \subset \mathcal{P}$ is a finite set of distinct monomials such that
\begin{enumerate}[(i)]
\item $\mathscr{M} \subset m_+\mathcal{Q}_U^{-}$,
\item $\mathcal{P}^+ \cap \mathscr{M} = \{m_{+}\}$,
\item for all $m\in \mathscr{M}$ and all $(i,a)\in U$, if $mA_{i,a}^{-1} \not\in \mathscr{M}$, then $mA_{i,a}^{-1}A_{j,b} \not\in \mathscr{M}$ unless $(j,b)=(i,a)$,
\item for all $m \in \mathscr{M}$ and all $i \in I$, there exists a unique $i$-dominant monomial $M\in \mathscr{M}$ such that
\begin{align*}
\text{trunc}_{\beta_i(M\mathcal{Q}_U^{-})} \: \chi_q(\beta_i(M)) = \sum_{m'\in m\mathcal{Q}_{\{i\} \times \mathbb{C}^{\times}} \cap \mathscr{M} } \beta_i(m').
\end{align*}
\end{enumerate}
Then
\begin{align*}
\text{trunc}_{m_{+}\mathcal{Q}_U^{-}} \: \chi_q(m_+) = \sum_{m\in \mathscr{M}} m.
\end{align*}
\end{theorem}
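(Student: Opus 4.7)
The plan is to show that $T := \mathrm{trunc}_{m_+\mathcal{Q}_U^-} \chi_q(m_+)$ and $S := \sum_{m\in\mathscr{M}} m$ coincide by comparing them level by level using the $U_{q_i}\widehat{\mathfrak{sl}_2}$-substructure of $U_q\widehat{\mathfrak{g}}$. The underlying principle, due to Frenkel and Reshetikhin, is that for each $i \in I$ the polynomial $\beta_i(\chi_q(L(m_+)))$ is the $q$-character of $L(m_+)$ viewed as a $U_{q_i}\widehat{\mathfrak{sl}_2}$-module, and hence decomposes as a sum of $q$-characters of simple $U_{q_i}\widehat{\mathfrak{sl}_2}$-modules, one for each $i$-dominant monomial of $\chi_q(L(m_+))$ (with multiplicity), each of which is completely determined by the string decomposition of its highest weight.

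I would then induct on the height of $m_+ m^{-1}$ in $\mathcal{Q}^+$, showing that the coefficients $[m]T$ and $[m]S$ agree for every $m \in m_+\mathcal{Q}_U^-$. The base case $m = m_+$ follows from condition (ii) together with the fact that $m_+$ has multiplicity one in $\chi_q(L(m_+))$. For the inductive step, every monomial $m \neq m_+$ appearing in $T$ or in $\mathscr{M}$ is reachable by going down an $i$-string from some strictly higher $i$-dominant ancestor $M$. Applying condition (iv) to such an $M$, its right-hand side captures precisely the $\beta_i$-images of the $i$-slice of $\mathscr{M}$ through $m$, while its left-hand side is a truncated $U_q\widehat{\mathfrak{sl}_2}$-character that, by the inductive hypothesis above $M$ together with the Frenkel--Reshetikhin decomposition, coincides with the corresponding slice of $\beta_i(T)$; this forces $[m]T = [m]S$.

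The main obstacle is ensuring that this $i$-by-$i$ analysis is globally coherent: a given monomial $m$ may be reached by descending from several dominant ancestors through different pairs $(i,a)$, and the various slicings must assign it the same multiplicity. This is exactly what condition (iii) buys. Without it, a monomial of the form $mA_{i,a}^{-1}A_{j,b}$ with $(j,b) \neq (i,a)$ could enter $\mathscr{M}$ while $mA_{i,a}^{-1}$ does not, producing a mismatch between the $\beta_i$- and $\beta_j$-slicings of $S$ that would prevent them from simultaneously agreeing with those of $T$. Once this bookkeeping is verified, the combined $U_{q_i}\widehat{\mathfrak{sl}_2}$-constraints for all $i \in I$, together with the pinned top monomial $m_+$, uniquely determine any polynomial supported on $m_+\mathcal{Q}_U^-$ satisfying them, and the induction closes to give $T = S$.
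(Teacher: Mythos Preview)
The paper does not give its own proof of this statement: Theorem~\ref{truncated} is quoted verbatim from \cite{MY12a} (their Theorem~2.1) and used as a black box, so there is no in-paper argument to compare against. Your sketch is in fact a faithful outline of the original Mukhin--Young proof: one shows by induction on height that the coefficient of each monomial in the truncated $q$-character agrees with its coefficient in $\sum_{m\in\mathscr{M}} m$, using that for every $i\in I$ the restriction $\beta_i(\chi_q(L(m_+)))$ is a sum of simple $U_{q_i}\widehat{\mathfrak{sl}_2}$-characters indexed by the $i$-dominant monomials, and that condition~(iv) forces $\sum_{m\in\mathscr M}m$ to satisfy the same constraints after truncation. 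Condition~(iii) is used exactly as you say, to guarantee that passing from one $i$-slice to another is consistent (no monomial can appear in $\mathscr M$ that would require an absent intermediate monomial in some other direction).

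One point you should tighten if you write this out in full: your inductive step asserts that the truncated $U_{q_i}\widehat{\mathfrak{sl}_2}$-character on the left of condition~(iv) ``coincides with the corresponding slice of $\beta_i(T)$'' by the inductive hypothesis on $M$. This is the heart of the argument, but it needs the observation that the $i$-dominant monomials of $T$ lying in $m_+\mathcal{Q}_U^-$ are exactly the $i$-dominant monomials of $\mathscr M$ (with the same multiplicities), which is what allows you to match up the $\widehat{\mathfrak{sl}_2}$-expansions term by term. This follows from the inductive hypothesis together with condition~(ii), but it deserves an explicit sentence rather than being folded into ``by the inductive hypothesis above $M$''.
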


Here $\chi_q(\beta_i(M))$ is the $q$-character of the irreducible $U_{q_i}(\hat{\mathfrak{sl}_2})$-module with highest weight monomial $\beta_i(M)$ and $\text{trunc}_{\beta_i(M\mathcal{Q}_{U}^{-})}$ is the polynomial obtained from $\chi_q(\beta_i(M))$ by keeping only the monomials of $\chi_q(\beta_i(M))$ in the set $\beta_i(M\mathcal{Q}_U^-)$.

\subsection{Minimal affinizations of $U_q \mathfrak{g}$-modules} \label{definition of minimal affinizations}
In what follows, we fix an $a\in \mathbb{C}^{\times}$ and denote $i_{s}=Y_{i, aq^{s}}$, $i\in I$, $s\in \mathbb{Z}$. Let $\lambda = k\omega_1 + l \omega_2 + m\omega_{3} + n\omega_{4}$, $k,l,m,n \in \mathbb{Z}_{\geq 0}$ and $V(\lambda)$ the simple $U_{q}\mathfrak{g}$-module with highest weight $\lambda$. Without loss of generality, we may assume that a simple $U_q \widehat{\mathfrak{g}} $-module $L(m_+)$ is a \textit{minimal affinization} of $V(\lambda)$ if and only if $m_+$ is one of the following monomials:
\begin{gather}
\begin{align*}
\widetilde{T}_{n, m, l, k}^{(s)}=&\left( \prod_{i=0}^{n-1} Y_{4, aq^{s+4i}} \right)  \left( \prod_{i=0}^{m-1} Y_{3, aq^{s+4n+4i+2}} \right)  \left( \prod_{i=0}^{l-1} Y_{2, aq^{s+4n+4m+2i+3}} \right)     \left( \prod_{i=0}^{k-1} Y_{1, aq^{s+4n+4m+2l+2i+4}} \right),
\end{align*}
\end{gather}
\begin{gather}
\begin{align*}
T_{n,m,l,k}^{(s)}=\left( \prod_{i=0}^{n-1} Y_{4, aq^{-s-4i}} \right)  \left( \prod_{i=0}^{m-1} Y_{3, aq^{-s-4n-4i-2}} \right)  \left( \prod_{i=0}^{l-1} Y_{2, aq^{-s-4n-4m-2i-3}} \right)     \left( \prod_{i=0}^{k-1} Y_{1, aq^{-s-4n-4m-2l-2i-4}} \right),
\end{align*}
\end{gather}
where $\prod_{0\leq j\leq -1}=1$, $s\in \mathbb{Z}$, see \cite{CP96a}. We denote $A^{-1}_{i, aq^{s}}$ by $A^{-1}_{i, s}$. We use $\mathcal T_{k, l, m, n}^{(s)}$ (resp. $\mathcal{\widetilde{T}}_{k, l, m, n}^{(s)}$) to denote the irreducible finite-dimensional $U_{q}\widehat{\mathfrak{g}}$-module with highest $l$-weight $T_{k, l, m, n}^{(s)}$ (resp. $\widetilde{T}_{k, l, m, n}^{(s)}$).

\section{A closed system which contains a large family of minimal affinizations of type $F_4$ } \label{main results}
In this section, we introduce a closed system of type $F_{4}$ that contains a large family of minimal affinizations:
\begin{align*}
&\mathcal{T}_{0,0,l,k}^{(-2)}, \  \mathcal{\widetilde{T}}_{n,0,l,0}^{(s)}, \ \mathcal{\widetilde{T}}_{0,m,l,0}^{(s)},
\ \mathcal{\widetilde{T}}_{n,m,0,0}^{(s)},
\ \mathcal{\widetilde{T}}_{n,m,l,0}^{(s)},
\ \mathcal{\widetilde{T}}_{n,0,0,k}^{(s)},
\ \mathcal{\widetilde{T}}_{0,m,0,k}^{(s)}(k \leq 2),\\
&\mathcal{\widetilde{T}}_{n,m,0,k}^{(s)}(k \leq 2),
\ \mathcal{\widetilde{T}}_{n,m,l,k}^{(s)}(k \leq 2),
\ \mathcal{\widetilde{T}}_{n,0,l,k}^{(s)}(k \leq 2),
\ \mathcal{\widetilde{T}}_{0,m,l,k}^{(s)}(k \leq 2).
\end{align*}
Here $k, l, m, n \in \mathbb{Z}_{\geq 1}$, $s\in \mathbb{Z}$.
\subsection{Special modules}
Let $k, l, m, n \in \mathbb{Z}_{\geq 1}$, $s\in \mathbb{Z}$. For $k\leq 0$, let $T^{(s)}_{0,0,0,k}=1$. We define
\begin{align*}
\widetilde{P}^{(s)}_{0,0,l,k}&=\widetilde{T}^{(s)}_{0, 0, l, k}, \quad k\in \mathbb{Z}_{\leq 2}, \\
\widetilde{P}^{(s)}_{0,m,0,k}&=\widetilde{T}^{(s)}_{0, m, 0, k}, \quad k\in \mathbb{Z}_{\leq 2}, \\
\widetilde{P}^{(s)}_{0,0,0,k}&=\widetilde{T}^{(s)}_{0,0,0,k}, \quad k\in \mathbb{Z}_{\geq 3}, \\
\widetilde{P}^{(s)}_{0,0,l,k}&=\widetilde{T}^{(s+2l+4)}_{0,0,0,k-2} \widetilde{T}^{(s+2l+8)}_{0,0,0,k-4}\widetilde{T}^{(s)}_{0,0,l,k}, \quad k\in \mathbb{Z}_{\geq 3}, \\
\widetilde{P}^{(s)}_{0,m,0,k}&=\widetilde{T}^{(s)}_{0,m,0,k} \widetilde{T}^{(s+4m+4)}_{0,0,0,k-2}, \quad k\in \mathbb{Z}_{\geq 3}.
\end{align*}
We use $\widetilde{\mathcal{P}}^{(s)}_{n,m,l,k}$ to denote the simple $U_q \widehat{\mathfrak{g}}$-module with the highest weight monomial $\widetilde{P}^{(s)}_{n,m,l,k}$.

\begin{theorem}[Theorem 3.9, \cite{Her07}]\label{Her 07}
For $l, m, n\in \mathbb{Z}_{\geq 1}$, $s\in \mathbb{Z}$, the modules $\mathcal{\widetilde{T}}_{n,m,0,0}^{(s)}$, $\mathcal{\widetilde{T}}_{n,0,l,0}^{(s)}$, $\mathcal{\widetilde{T}}_{0,m,l,0}^{(s)}$, $\mathcal{\widetilde{T}}_{n,m,l,0}^{(s)}$ are special.
\end{theorem}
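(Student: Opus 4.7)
My plan is to prove that each of the four modules has exactly one dominant monomial in its $q$-character, namely the highest weight monomial itself. The engine is the right-negativity machinery of \cite{FM01}: every $m \in \mathscr{M}(L(m_+))$ lies in $m_+ \mathcal{Q}^-$, so any $m \neq m_+$ factors as $m = m_+ P$ with $P$ a non-trivial product of $A_{i,b}^{-1}$'s, hence right-negative. Dominance of $m$ then forces every negative exponent of $P$ at the rightmost $q$-shift to be compensated by a positive exponent of $m_+$ at the same shift, producing rigid numerical constraints that I will iterate.

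I would carry out the argument as follows. First, focusing on the full three-colour family $m_+ = \widetilde{T}_{n,m,l,0}^{(s)}$ (the other three being limiting cases), I record the rightmost shifts of each colour appearing in $m_+$: the rightmost $Y_4$ factor is at $aq^{s+4n-4}$, the rightmost $Y_3$ at $aq^{s+4n+4m-2}$, and the rightmost $Y_2$ at $aq^{s+4n+4m+2l+1}$. For a hypothetical dominant $m \neq m_+$, define
\[
L = \max\{ \ell \in \mathbb{Z} \mid u_{i, aq^{\ell}}(P) \neq 0 \text{ for some } i \in I \}.
\]
Right-negativity gives $u_{j, aq^L}(P) \leq 0$ for all $j$, so each negative factor at shift $L$ must be cancelled by a positive factor of $m_+$ at the same shift. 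Using the explicit formulas in~(\ref{eqution a}) and the above shift list, this pins down both $L$ and the identity of the rightmost $A^{-1}$-factor in $P$.

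Next I would iterate: once the rightmost $A^{-1}$-factor is identified and peeled off, the residual product is analysed by the same principle, and by induction on the multiplicity of $A^{-1}$-factors the argument reduces to the Kirillov--Reshetikhin cases $(n,0,0,0)$, $(0,m,0,0)$, $(0,0,l,0)$, which are already known to be special. To make the bookkeeping uniform I would apply Theorem~\ref{truncated} with $U = I \times \{aq^\ell \mid \ell \le L_0\}$ for an appropriate threshold $L_0$ and a candidate set $\mathscr{M}$ supplied by the induction hypothesis, verifying the four conditions via the $U_{q_i}\widehat{\mathfrak{sl}_2}$-reductions $\beta_i$ and the known $\widehat{\mathfrak{sl}_2}$ decompositions.

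The main obstacle is the short/long interface at nodes~$2$ and~$3$: because $d_3 = 2$, $A_{3,a}^{-1}$ carries the factor $Y_{2,aq}^{-1} Y_{2,aq^{-1}}^{-1}$, so propagating $A^{-1}$-factors across this boundary involves delicate $q$-shift bookkeeping. The worry is that cancelling a negative index-$2$ or index-$3$ factor at shift $L$ against the corresponding positive factor of $m_+$ might inadvertently create a dominant monomial at a smaller shift. A cleaner alternative, if the direct analysis becomes unwieldy, is to embed $L(m_+)$ into a tensor product of three Kirillov--Reshetikhin modules (one per colour) and pull back the dominant monomials through Lemma~\ref{well-known}; specialness of each KR tensor factor together with an inequality on the $q$-shifts then forces the unique dominant monomial of the product to equal $m_+$.
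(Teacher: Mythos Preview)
The paper does not prove this statement: Theorem~\ref{Her 07} is quoted verbatim from \cite{Her07} (Hernandez, Theorem~3.9) and is used as a black box. There is therefore no ``paper's own proof'' to compare against; the paper's own specialness arguments in Section~\ref{proof of special} are reserved for the \emph{other} modules in Theorem~\ref{special}, and those arguments already take Theorem~\ref{Her 07} as input.

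That said, your sketch has a genuine gap in the fallback tensor-product argument. You write that embedding $L(m_+)$ into a tensor product of three Kirillov--Reshetikhin modules and invoking Lemma~\ref{well-known} would force the unique dominant monomial of the product to be $m_+$, because each KR factor is special. This is false: a product $\chi_q(m_1')\chi_q(m_2')$ of special $q$-characters typically has \emph{many} dominant monomials, coming from non-highest monomials of one factor cancelling against the highest monomial of the other. Table~\ref{dominant monomials in the M-system of type F_4 1} in this very paper lists such products with $n+1$ or $m+1$ dominant monomials. To make a tensor-product argument work one needs either two \emph{different} factorisations $m_+ = m_1'm_2' = m_1''m_2''$ and an intersection argument (as in Section~\ref{cases 6.1}), or a careful enumeration of all dominant monomials in the single product followed by an exclusion argument---neither of which is what you wrote.

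Your primary ``peeling'' strategy is also too vague to stand as a proof. Identifying the rightmost $A^{-1}$-factor and removing it does not obviously reduce the problem to a smaller instance of the same shape: after removal, the residual monomial $m_+ \cdot (P / A_{i,b}^{-1})$ need not be the highest weight of any module in the family, so there is no clear induction hypothesis to invoke. Hernandez's actual proof in \cite{Her07} proceeds by a more elaborate induction using his elimination theorem and a detailed analysis of how dominant monomials propagate through the $J$-restriction functors; the short/long interface you flag is indeed the crux, and it is not handled by the mechanism you describe.
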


\begin{remark}
In the paper \cite{Her07}, $\alpha_1, \alpha_2$ are simple long roots and $\alpha_3, \alpha_4$ are simple short roots. In this paper, $\alpha_1, \alpha_2$ are simple short roots and $\alpha_3, \alpha_4$ are simple long roots.
\end{remark}

\begin{theorem}\label{special}
The modules
\begin{align*}
&\mathcal{T}_{0,0,l,k}^{(-2)},
\  \mathcal{\widetilde{T}}_{n,0,l,0}^{(s)},
\ \mathcal{\widetilde{T}}_{0,m,l,0}^{(s)},
\ \mathcal{\widetilde{T}}_{n,m,0,0}^{(s)},
\ \mathcal{\widetilde{T}}_{n,m,l,0}^{(s)},
\ \mathcal{\widetilde{T}}_{n,0,0,k}^{(s)},
%\ \mathcal{\widetilde{T}}_{0,m,0,k}^{(s)}(k \leq 2),
\mathcal{\widetilde{T}}_{n,m,0,k}^{(s)} \ (k \leq 2),\\
&\mathcal{\widetilde{T}}_{n,m,l,k}^{(s)} \ (k \leq 2),
\ \mathcal{\widetilde{T}}_{n,0,l,k}^{(s)} \ (k \leq 2),
\ \mathcal{\widetilde{T}}_{0,m,l,k}^{(s)} \ (k \leq 2),
\ \widetilde{P}^{(s)}_{0,0,l,k},
\ \widetilde{P}^{(s)}_{0,m,0,k},
\end{align*}
where $k, l, m, n\in \mathbb{Z}_{\geq 1}$, $s\in \mathbb{Z}$, are special.
\end{theorem}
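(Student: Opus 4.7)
The plan is to establish specialness for each module $L(m_+)$ in the list by showing that every monomial $m \neq m_+$ in $\chi_q(L(m_+))$ has some negative exponent. Since $\mathscr{M}(L(m_+)) \subset m_+\mathcal{Q}^-$, we may write $m = m_+ \prod_r A_{i_r, a_r}^{-1}$ and exploit that each $A_{i,a}^{-1}$ is right-negative and products of right-negative monomials remain right-negative. Hence it suffices to show that every non-trivial descent from $m_+$ is right-negative, since right-negative monomials are never dominant.

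First, the four families $\mathcal{\widetilde{T}}_{n,m,0,0}^{(s)}$, $\mathcal{\widetilde{T}}_{n,0,l,0}^{(s)}$, $\mathcal{\widetilde{T}}_{0,m,l,0}^{(s)}$, $\mathcal{\widetilde{T}}_{n,m,l,0}^{(s)}$ are already special by Theorem \ref{Her 07}. The module $\mathcal{T}_{0,0,l,k}^{(-2)}$ has its highest weight supported only on the type-$A_2$ sub-diagram $\{1,2\}$ of $F_4$, so one transfers the specialness of type-$A_n$ minimal affinizations (cf.\ \cite{ZDLL15}) by restricting the relevant $A^{-1}$-descent to this sub-root-system. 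For the mixed cases with $k\geq 1$ at node $1$, the plan is to extend Hernandez's argument for the $k=0$ skeleton: attach the $k$ factors $Y_{1,\cdot}$ at the prescribed spectral parameters and verify that no sequence of $A^{-1}$-applications yields a new dominant monomial. Because nodes $1$ and $4$ are non-adjacent in the Dynkin diagram, the additional $Y_{1,\cdot}$ factors couple to the skeleton only through node $2$, so only $A_{1,\cdot}^{-1}$ and $A_{2,\cdot}^{-1}$ can interact with them. This in particular handles $\mathcal{\widetilde{T}}_{n,0,0,k}^{(s)}$ for all $k$: the node-$1$ and node-$4$ descents remain essentially independent, so the question reduces to specialness of fundamental-type modules at each of the two ends.

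For the product modules $\widetilde{P}^{(s)}_{0,0,l,k}$ and $\widetilde{P}^{(s)}_{0,m,0,k}$, I would apply Lemma \ref{well-known} to bound $\mathscr{M}(\widetilde{\mathcal{P}}^{(s)}_{\cdot,\cdot,l,k})$ by the product of the monomial sets of the defining tensor factors. Each factor is special by Theorem \ref{Her 07} or the $k\leq 2$ cases above, and the spectral shifts ($2l+4$, $2l+8$, $4m+4$) are arranged so that the right-negative windows of the three factors are disjoint: a non-highest-weight contribution from one factor has its rightmost negative exponent in a spectral range that cannot be cancelled by any monomial of the other factors. Consequently the only dominant monomial in the product is the product of the three highest-weight monomials. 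To make this rigorous one can invoke Theorem \ref{truncated}, which reduces the verification to a finite combinatorial check on an explicit candidate set $\mathscr{M}$ closed under the required $i$-dominant sub-character conditions.

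The main obstacle is the detailed right-negativity bookkeeping in the $k\leq 2$ mixed cases. One must enumerate the sequences of $A^{-1}$-applications that can simultaneously cancel the contributions from the $Y_{1,\cdot}$ factors and those propagating down from the $Y_{2,\cdot},Y_{3,\cdot},Y_{4,\cdot}$ factors, and rule out dominance in each. The bound $k\leq 2$ is essential: beyond two $Y_{1,\cdot}$ factors new genuine dominant monomials appear in the descent — which is precisely the phenomenon motivating the introduction of the product modules $\widetilde{P}^{(s)}_{0,0,l,k}$ and $\widetilde{P}^{(s)}_{0,m,0,k}$ — so the specialness argument genuinely fails for $k\geq 3$ without the product construction to absorb the extra dominant monomials.
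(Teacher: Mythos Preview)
Your central claim — that every non-highest monomial in $\chi_q(L(m_+))$ is right-negative — is false, and this is the main gap. In the paper's own computation for $\widetilde{\mathcal{T}}_{n,0,0,k}^{(0)}$ (Section~\ref{cases 6.1}) one sees explicitly that the descent chain $A_{4}^{-1}A_{3}^{-1}A_{2}^{-1}\cdots$ from a single $Y_{4,\cdot}$ factor produces several monomials ($\bar m_1,\dots,\bar m_6$ there) that are \emph{not} right-negative: e.g.\ $\bar m_3 = 2_{4n-1}2_{4n+3}^{-1}1_{4n+2}1_{4n+4}\cdots 1_{4n+2k+2}$ has its rightmost exponent positive. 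These non-right-negative tails can in principle combine with monomials from other factors to form dominant monomials, so ``right-negativity bookkeeping'' alone cannot close the argument. For the same reason your ``node-$1$ and node-$4$ descents are independent'' heuristic for $\widetilde{\mathcal{T}}_{n,0,0,k}$ fails: the $A_4^{-1}A_3^{-1}A_2^{-1}$ chain produces $Y_{1,\cdot}^{\pm 1}$ factors at spectral parameters overlapping the $Y_{1,\cdot}$ block. Your type-$A_2$ reduction for $\mathcal{T}_{0,0,l,k}^{(-2)}$ is also incorrect as stated, since $A_{2,a}^{-1}$ contains $Y_{3,a}^{-1}$ and the $q$-character immediately leaves the $\{1,2\}$ sub-diagram.

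The paper's actual technique, which your proposal lacks, is a \emph{two-factorization intersection}: one writes $m_+ = m_1'm_2' = m_1''m_2''$ in two genuinely different ways (for $\widetilde{T}_{n,0,0,k}$: peel off one $Y_{4,\cdot}$ versus peel off all $Y_{4,\cdot}$), uses Lemma~\ref{well-known} to get $\mathscr{M}(L(m_+)) \subseteq \mathscr{M}(L(m_1'))\mathscr{M}(L(m_2')) \cap \mathscr{M}(L(m_1''))\mathscr{M}(L(m_2''))$, then classifies the dominant monomials in each product by Frenkel--Mukhin (inductively, since the pieces are smaller and already known to be special) and checks that only $m_+$ lies in both lists. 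The truncated-$q$-character criterion (Theorem~\ref{truncated}) is used only to start the induction (e.g.\ $n=1$), not as the main engine. Your proposal for the $\widetilde{P}$-modules via ``disjoint right-negative windows'' has the right flavor but is again only heuristic without this intersection trick, because the non-right-negative monomials in each factor must be controlled explicitly.
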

We will prove Theorem \ref{special} in Section \ref{proof of special}.

Since the modules are special in Theorem \ref{Her 07} and Theorem \ref{special}, we can use the Frenkel-Mukhin algorithm to compute the $q$-characters of these modules.

\subsection{A closed system of type $F_4$}
\begin{theorem}\label{M-system of type F4}
For $s\in \mathbb{Z}$ and $k, l, m, n\geq 1$, we have
\begin{align}\label{eqn 1}
[\mathcal{T}_{0,0,l-1,k}^{(-2)}][\mathcal{T}_{0,0,l,k}^{(-2)}]=[\mathcal{T}_{0,0,l,k-1}^{(-2)}][\mathcal{T}_{0,0,l-1,k+1}^{(-2)}]
+[\mathcal{T}_{0,0,k+l,0}^{(-2)}][\mathcal{T}_{0,0,l-1,0}^{(-2)}],
\end{align}
\begin{align}\label{eqn 2}
[\mathcal{\widetilde{T}}_{n,m-1,0,0}^{(s+4)}][\mathcal{\widetilde{T}}_{n,m,0,0}^{(s)}]=[\mathcal{\widetilde{T}}_{n-1,m,0,0}^{(s+4)}][\mathcal{\widetilde{T}}_{n+1,m-1,0,0}^{(s)}]
+[\mathcal{\widetilde{T}}_{0,m-1,0,0}^{(s+4n+4)}][\mathcal{\widetilde{T}}_{0,n+m,0,0}^{(s)}],
\end{align}

\begin{align}\label{eqn 3a}
[\mathcal{\widetilde{T}}_{n,0,0,0}^{(s+4)}][\mathcal{\widetilde{T}}_{n,0,1,0}^{(s)}]&=[\mathcal{\widetilde{T}}_{n-1,0,1,0}^{(s+4)}][\mathcal{\widetilde{T}}_{n+1,0,0,0}^{(s)}]
+[\mathcal{\widetilde{T}}_{0,n,1,0}^{(s)}],
\end{align}
\begin{align}\label{eqn 3b}
[\mathcal{\widetilde{T}}_{n,0,l-2,0}^{(s+4)}][\mathcal{\widetilde{T}}_{n,0,l,0}^{(s)}]&=[\mathcal{\widetilde{T}}_{n-1,0,l,0}^{(s+4)}][\mathcal{\widetilde{T}}_{n+1,0,l-2,0}^{(s)}]
+[\mathcal{\widetilde{T}}_{0,0,l-2,0}^{(s+4n+4)}][\mathcal{\widetilde{T}}_{0,n,l,0}^{(s)}], \ l\geq 2,
\end{align}

\begin{align}\label{eqn 4a}
[\mathcal{\widetilde{T}}_{0,m,0,0}^{(s+4)}][\mathcal{\widetilde{T}}_{0,m,1,0}^{(s)}]&=[\mathcal{\widetilde{T}}_{0,m-1,1,0}^{(s+4)}][\mathcal{\widetilde{T}}_{0,m+1,0,0}^{(s)}]
+[\mathcal{\widetilde{T}}_{m,0,0,0}^{(s+4)}][\mathcal{\widetilde{T}}_{0,0,1+2m,0}^{(s)}],
\end{align}

\begin{align}\label{eqn 4b}
[\mathcal{\widetilde{T}}_{0,m,l-2,0}^{(s+4)}][\mathcal{\widetilde{T}}_{0,m,l,0}^{(s)}]&=[\mathcal{\widetilde{T}}_{0,m-1,l,0}^{(s+4)}][\mathcal{\widetilde{T}}_{0,m+1,l-2,0}^{(s)}]
+[\mathcal{\widetilde{T}}_{m,0,l-2,0}^{(s+4)}][\mathcal{\widetilde{T}}_{0,0,l+2m,0}^{(s)}],\ l\geq 2,
\end{align}

\begin{align}\label{eqn 5}
[\mathcal{\widetilde{T}}_{n,m-1,l,0}^{(s+4)}][\mathcal{\widetilde{T}}_{n,m,l,0}^{(s)}]=[\mathcal{\widetilde{T}}_{n-1,m,l,0}^{(s+4)}][\mathcal{\widetilde{T}}_{n+1,m-1,l,0}^{(s)}]
+[\mathcal{\widetilde{T}}_{0,m-1,l,0}^{(s+4n+4)}][\mathcal{\widetilde{T}}_{0,n+m,l,0}^{(s)}],
\end{align}

\begin{align}\label{eqn 6a}
[\mathcal{\widetilde{T}}_{n,0,0,0}^{(s+4)}][\mathcal{\widetilde{T}}_{n,0,0,k}^{(s)}]=[\mathcal{\widetilde{T}}_{n-1,0,0,k}^{(s+4)}][\mathcal{\widetilde{T}}_{n+1,0,0,0}^{(s)}]
+[\mathcal{\widetilde{T}}_{0,n,0,k}^{(s)}],\ k=1, 2,
\end{align}

\begin{align}\label{eqn 7a}
[\mathcal{\widetilde{T}}_{0,m,0,0}^{(s+4)}][\mathcal{\widetilde{T}}_{0,m,0,k}^{(s)}]&=[\mathcal{\widetilde{T}}_{0,m-1,0,k}^{(s+4)}][\mathcal{\widetilde{T}}_{0,m+1,0,0}^{(s)}]
+[\mathcal{\widetilde{T}}_{0,0,2m,k}^{(s)}][\mathcal{\widetilde{T}}_{m,0,0,0}^{(s+4)}],\ k=1, 2,
\end{align}

\begin{align}\label{eqn 8}
[\mathcal{\widetilde{T}}_{n,0,0,k-1}^{(s+4)}][\mathcal{\widetilde{T}}_{n,0,1,k}^{(s)}]=[\mathcal{\widetilde{T}}_{n-1,0,1,k}^{(s+4)}][\mathcal{\widetilde{T}}_{n+1,0,0,k-1}^{(s)}]
+[\mathcal{\widetilde{T}}_{0,n,1,k}^{(s)}],\ k=1,2,
\end{align}

\begin{align}\label{eqn 9}
[\mathcal{\widetilde{T}}_{n,0,l-2,k}^{(s+4)}][\mathcal{\widetilde{T}}_{n,0,l,k}^{(s)}]=[\mathcal{\widetilde{T}}_{n-1,0,l,k}^{(s+4)}][\mathcal{\widetilde{T}}_{n+1,0,l-2,k}^{(s)}]
+[\mathcal{\widetilde{T}}_{0,n,l,k}^{(s)}][\mathcal{\widetilde{T}}_{0,0,l-2,k}^{(s+4n+4)}],\ k=1,2,\ l \geq 2,
\end{align}

\begin{align}\label{eqn 10}
[\mathcal{\widetilde{T}}_{0,m,0,k-1}^{(s+4)}][\mathcal{\widetilde{T}}_{0,m,1,k}^{(s)}]=[\mathcal{\widetilde{T}}_{0,m-1,1,k}^{(s+4)}][\mathcal{\widetilde{T}}_{0,m+1,0,k-1}^{(s)}]
+[\mathcal{\widetilde{T}}_{0,0,1+2m,k}^{(s)}][\mathcal{\widetilde{T}}_{m,0,0,k-1}^{(s+4)}],\ k=1,2,
\end{align}

\begin{align}\label{eqn 11}
&[\mathcal{\widetilde{T}}_{0,m,l-2,k}^{(s+4)}][\mathcal{\widetilde{T}}_{0,m,l,k}^{(s)}]=[\mathcal{\widetilde{T}}_{0,m-1,l,k}^{(s+4)}][\mathcal{\widetilde{T}}_{0,m+1,l-2,k}^{(s)}]
+[\mathcal{\widetilde{T}}_{0,0,l+2m,k}^{(s)}][\mathcal{\widetilde{T}}_{m,0,l-2,k}^{(s+4)}],\ k=1,2,\ l \geq 2,
\end{align}

\begin{align}\label{eqn 12}
[\mathcal{\widetilde{T}}_{n,m-1,0,k}^{(s+4)}][\mathcal{\widetilde{T}}_{n,m,0,k}^{(s)}]=[\mathcal{\widetilde{T}}_{n-1,m,0,k}^{(s+4)}][\mathcal{\widetilde{T}}_{n+1,m-1,0,k}^{(s)}]
+[\mathcal{\widetilde{T}}_{0,n+m,0,k}^{(s)}][\mathcal{\widetilde{T}}_{0,m-1,0,k}^{(s+4)}],\ k=1,2,
\end{align}

\begin{align}\label{eqn 13}
[\mathcal{\widetilde{T}}_{n,m-1,l,k}^{(s+4)}][\mathcal{\widetilde{T}}_{n,m,l,k}^{(s)}]=[\mathcal{\widetilde{T}}_{n-1,m,l,k}^{(s+4)}][\mathcal{\widetilde{T}}_{n+1,m-1,l,k}^{(s)}]
+[\mathcal{\widetilde{T}}_{0,n+m,l,k}^{(s)}][\mathcal{\widetilde{T}}_{0,m-1,l,k}^{(s+4n+4)}],\ k=1,2,
\end{align}

\begin{align} \label{eqn 14}
\begin{split}
[\widetilde{\mathcal{P}}^{(s+2)}_{0,0,1,k-1}][\widetilde{\mathcal{P}}^{(s)}_{0,0,1,k}] & =[\widetilde{\mathcal{T}}^{(s+10)}_{0,0,0,k-4}][\widetilde{\mathcal{T}}^{(s+6)}_{0,0,0,k-2}]
[\widetilde{\mathcal{T}}^{(s+2)}_{0,0,0,k}] [\widetilde{\mathcal{P}}^{(s)}_{0,0,2,k-1}] \\
& \quad + [\widetilde{\mathcal{T}}^{(s+12)}_{0,0,0,k-5}] [\widetilde{\mathcal{T}}^{(s+8)}_{0,0,0,k-3}] [\widetilde{\mathcal{T}}^{(s+4)}_{0,0,0,k-1}] [\widetilde{\mathcal{T}}^{(s)}_{0,0,0,k+1}] [\widetilde{\mathcal{P}}^{(s+2)}_{0,1,0,k-2}],\ k \geq 3,\ l=1,
\end{split}
\end{align}

\begin{align}\label{eqn 15}
[\widetilde{\mathcal{P}}^{(s+2)}_{0,0,l,k-1}][\widetilde{\mathcal{P}}^{(s)}_{0,0,l,k}]=[\widetilde{\mathcal{P}}^{(s+2)}_{0,0,l-1,k}][\widetilde{\mathcal{P}}^{(s)}_{0,0,l+1,k-1}]
+[\widetilde{\mathcal{T}}^{(s+2l+10)}_{0,0,0,k-5}][\widetilde{\mathcal{T}}^{(s)}_{0,0,0,k+l}] [\widetilde{\mathcal{P}}^{(s+2)}_{0,\frac{l+1}{2},0,k-2}]
[\widetilde{\mathcal{P}}^{(s+4)}_{0,\frac{l-1}{2},0,k-1}],
\end{align}
where $k\geq3, \text{ l is odd}, \ l\geq3$,

\begin{align} \label{eqn 16}
[\widetilde{\mathcal{P}}^{(s+2)}_{0,0,l,k-1}][\widetilde{\mathcal{P}}^{(s)}_{0,0,l,k}]=[\widetilde{\mathcal{P}}^{(s+2)}_{0,0,l-1,k}][\widetilde{\mathcal{P}}^{(s)}_{0,0,l+1,k-1}]
+[\widetilde{\mathcal{T}}^{(s+2l+10)}_{0,0,0,k-5}][\widetilde{\mathcal{T}}^{(s)}_{0,0,0,k+l}] [\widetilde{\mathcal{P}}^{(s+4)}_{0,\frac{l}{2},0,k-2}]
[\widetilde{\mathcal{P}}^{(s+2)}_{0,\frac{l}{2},0,k-1}],
\end{align}
where $k\geq3, \text{ l is even}, \ l \geq 2$,

\begin{align} \label{eqn 17}
[\widetilde{\mathcal{P}}^{(s+4)}_{0,1,0,k-2}][\widetilde{\mathcal{P}}^{(s)}_{0,1,0,k}]=[\widetilde{\mathcal{T}}^{(s+8)}_{0,0,0,k-2}][\widetilde{\mathcal{T}}^{(s+4)}_{0,0,0,k}]
[\widetilde{\mathcal{P}}^{(s)}_{0,2,0,k-2}]+[\widetilde{\mathcal{P}}^{(s)}_{0,0,2,k}][\widetilde{\mathcal{T}}^{(s+4)}_{1,0,0,k-2}],\ k \geq 3,
\end{align}

\begin{align}\label{eqn 18}
[\widetilde{\mathcal{P}}^{(s+4)}_{0,m,0,k-2}][\widetilde{\mathcal{P}}^{(s)}_{0,m,0,k}]=[\widetilde{\mathcal{P}}^{(s+4)}_{0,m-1,0,k}][\widetilde{\mathcal{P}}^{(s)}_{0,m+1,0,k-2}]
+[\widetilde{\mathcal{P}}^{(s)}_{0,0,2m,k}][\widetilde{\mathcal{T}}^{(s+4)}_{m,0,0,k-2}],\ k\geq 3, \ m\geq 2,
\end{align}

\begin{align}\label{eqn 19}
[\widetilde{\mathcal{T}}^{(s+4)}_{n,0,0,k-2}][\widetilde{\mathcal{T}}^{(s)}_{n,0,0,k}]&=[\widetilde{\mathcal{T}}^{(s+4)}_{n-1,0,0,k}][\widetilde{\mathcal{T}}^{(s)}_{n+1,0,0,k-2}]+[\widetilde{\mathcal{P}}^{(s)}_{0,n,0,k}],
\ k\geq3.
\end{align}

\end{theorem}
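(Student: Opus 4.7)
The plan is to reduce each of the nineteen equations (\ref{eqn 1})--(\ref{eqn 19}) to a statement about dominant monomials in a product of two $q$-characters, and then verify that statement using the truncated $q$-character machinery of Theorem \ref{truncated} combined with the specialness results of Theorem \ref{special} and Theorem \ref{Her 07}. Schematically every equation has the form
\[
[L(M)]\,[L(M')] \;=\; [L(N_1)]\,[L(N_2)] + [L(P_1)]\,[L(P_2)]
\]
(with the second summand sometimes a single class), where the first summand satisfies $N_1 N_2 = MM'$ and the second satisfies $P_1 P_2 < MM'$ in the partial order (\ref{partial order of monomials}). The first step is to verify the elementary monomial identity $N_1 N_2 = MM'$ by direct computation from the definitions of $\widetilde{T}^{(s)}_{n,m,l,k}$, $T^{(s)}_{n,m,l,k}$, and $\widetilde{P}^{(s)}_{\cdot}$. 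The second step is to show that both right-hand tensor products $L(N_1) \otimes L(N_2)$ and $L(P_1) \otimes L(P_2)$ are simple, so that the right-hand side is unambiguously a sum of classes of simple modules; this is exactly Theorem \ref{irreducible}, stated later in the paper and which I would take as given.

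The main step is to prove that the only dominant monomials occurring in $\chi_q(L(M)) \cdot \chi_q(L(M'))$ are $MM'$ and $P_1 P_2$, each with multiplicity one. Given this, together with the specialness of $L(MM') = L(N_1 N_2)$ and of $L(P_1 P_2)$, the usual argument (as in \cite{ZDLL15} and \cite{QL14}) forces the claimed equality of classes in the Grothendieck ring. To establish the dominant-monomial count, I would apply Theorem \ref{truncated} on a finite set $U \subset I \times \mathbb{C}^{\times}$ chosen to contain all shifts in the range $[s_{\min}, s_{\max}]$ appearing in $MM'$. Outside this range, the Frenkel--Mukhin algorithm (applicable by the specialness of $L(M)$ and $L(M')$) yields only right-negative monomials, so products involving such factors cannot be dominant. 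Inside $U$, every monomial of $\chi_q(L(M))$ (resp.\ $\chi_q(L(M'))$) has the form $M \prod A^{-1}_{i,b}$ (resp.\ $M' \prod A^{-1}_{i,c}$) with factors in $\mathcal{Q}_U^{-}$, so a dominant product corresponds to a factorization of $MM'/\mu$ into $A^{-1}$'s for $\mu \in \{MM', P_1 P_2\}$; a direct combinatorial enumeration, organized by depth in $\mathcal{Q}_U^{-}$, should produce exactly these two factorizations.

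For the $\widetilde{P}$-equations (\ref{eqn 14})--(\ref{eqn 19}), the highest-weight monomials $\widetilde{P}^{(s)}_{0,0,l,k}$ and $\widetilde{P}^{(s)}_{0,m,0,k}$ are products of several $\widetilde{T}$-weights, so before running the dominant-monomial analysis I would prove tensor-product factorizations such as
\[
[\widetilde{\mathcal{P}}^{(s)}_{0,0,l,k}] = [\widetilde{\mathcal{T}}^{(s+2l+4)}_{0,0,0,k-2}]\,[\widetilde{\mathcal{T}}^{(s+2l+8)}_{0,0,0,k-4}]\,[\widetilde{\mathcal{T}}^{(s)}_{0,0,l,k}]
\]
(and the analogous identity for $\widetilde{\mathcal{P}}^{(s)}_{0,m,0,k}$) by checking that, after applying each $\beta_i$, the $\widehat{\mathfrak{sl}}_2$-strings of the factors are pairwise in general position, so each tensor product on the right is simple. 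Once this reduction is in place the analysis of (\ref{eqn 14})--(\ref{eqn 19}) collapses to the same sort of argument used for (\ref{eqn 1})--(\ref{eqn 13}). The main obstacle throughout is organizing the nineteen cases without duplicating work; I would group them by which of $k,l,m,n$ lies in its boundary regime ($=0$ or $\leq 2$) and handle each group by a single uniform truncated-character computation, rather than writing out nineteen essentially identical verifications.
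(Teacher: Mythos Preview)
Your overall plan---reduce to dominant monomials and compare---is the right one, and it is what the paper does.  But one key claim is false and would make the argument fail: you assert that the only dominant monomials in $\chi_q(L(M))\chi_q(L(M'))$ are $MM'$ and $P_1P_2$, each with multiplicity one.  This is not true.  In almost every equation of Theorem~\ref{M-system of type F4}, the left-hand product contains a whole chain of dominant monomials.  For instance in (\ref{eqn 13}) (with $M=\widetilde{T}_{n,m-1,l,k}^{(s+4)}\widetilde{T}_{n,m,l,k}^{(s)}$) the dominant monomials are $M_r=M\prod_{i=0}^{r-1}A^{-1}_{4,aq^{s+4n-2-4i}}$ for $0\le r\le n$, so there are $n+1$ of them, not two.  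The paper's Lemma~\ref{lemma1} records the analogous list for every equation.  The correct statement you need is that the multiset of dominant monomials on the left-hand side equals the union of the multisets of dominant monomials of the two products on the right-hand side; once that is established, injectivity of $\chi_q$ and the fact that an element of the Grothendieck ring is determined by the dominant monomials in its $q$-character give the identity.  With this correction your use of Theorem~\ref{irreducible} becomes unnecessary: the paper proves Theorem~\ref{M-system of type F4} by the dominant-monomial comparison alone, and only afterwards proves Theorem~\ref{irreducible} as a separate result.

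A second issue is your treatment of the $\widetilde{\mathcal{P}}$-modules.  You propose to prove a tensor factorization such as $[\widetilde{\mathcal{P}}^{(s)}_{0,0,l,k}]=[\widetilde{\mathcal{T}}^{(s+2l+4)}_{0,0,0,k-2}][\widetilde{\mathcal{T}}^{(s+2l+8)}_{0,0,0,k-4}][\widetilde{\mathcal{T}}^{(s)}_{0,0,l,k}]$ by checking that the $\beta_i$-images of the factors have strings in general position.  That criterion only guarantees simplicity of tensor products for $U_q\widehat{\mathfrak{sl}}_2$; for general $\mathfrak g$ it says nothing about whether $L(m_1)\otimes L(m_2)$ is simple.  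The paper does not need, and does not attempt, such a factorization: it proves directly (Theorem~\ref{special}) that each $\widetilde{\mathcal{P}}^{(s)}_{0,0,l,k}$ and $\widetilde{\mathcal{P}}^{(s)}_{0,m,0,k}$ is special, so the Frenkel--Mukhin algorithm applies to it as a single simple module, and the dominant-monomial analysis of Lemma~\ref{lemma1} goes through for (\ref{eqn 14})--(\ref{eqn 19}) exactly as for (\ref{eqn 1})--(\ref{eqn 13}).
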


Theorem \ref{M-system of type F4} will be proved in Section \ref{proof main1}. The system in Theorem \ref{M-system of type F4} is a closed system in the sense that all modules in the system can be computed recursively using Kirillov-Reshetikhin modules.

\begin{example}\label{example 1}
The following are some equations in the system in Theorem \ref{M-system of type F4}.
\begin{gather}
\begin{align*}
&[1_{-2}][1_{-4}2_{-1}]=[1_{-4}1_{-2}][2_{-1}]+ [2_{-3}2_{-1}], \\
&[1_{-4}1_{-2}][1_{-6}1_{-4}2_{-1}]=[1_{-4}2_{-1}][1_{-6}1_{-4}1_{-2}]+[2_{-5}2_{-3}2_{-1}], \\
&[1_{-6}1_{-4}1_{-2}][1_{-8}1_{-6}1_{-4}2_{-1}]=[1_{-6}1_{-4}2_{-1}][1_{-8}1_{-6}1_{-4}1_{-2}]+[2_{-7}2_{-5}2_{-3}2_{-1}],\\
%&[1_{-8}1_{-6}1_{-4}1_{-2}][1_{-10}1_{-8}1_{-6}1_{-4}2_{-1}]=[1_{-8}1_{-6}1_{-4}2_{-1}][1_{-10}1_{-8}1_{-6}1_{-4}1_{-2}]+[2_{-9}2_{-7}2_{-5}2_{-3}2_{-1}],\\
&[3_{-4}][1_{0}2_{-3}3_{-8}]=[1_{0}2_{-3}][3_{-8}3_{-4}]+[1_{0}2_{-7}2_{-5}2_{-3}][4_{-6}],\\
&[3_{-8}3_{-4}][1_{0}2_{-3}3_{-12}3_{-8}=[1_{0}2_{-3}3_{-8}][3_{-12}3_{-8}3_{-4}]+[1_{0}2_{-11}2_{-9}2_{-7}2_{-5}2_{-3}][4_{-10}4_{-6}],\\
&[3_{-12}3_{-8}3_{-4}][1_{0}2_{-3}3_{-16}3_{-12}3_{-8}]=[1_{0}2_{-3}3_{-12}3_{-8}][3_{-16}3_{-12}3_{-8}3_{-4}]+[1_{0}2_{-15}2_{-13}2_{-11}2_{-9}2_{-7}2_{-5}2_{-3}][4_{-14}  4_{-10}4_{-6}],\\
&[4_{-6}][1_{0}2_{-3}4_{-10}]=[1_{0}2_{-3}][4_{-10}4_{-6}]+[1_{0}2_{-3}3_{-8}],\\
&[4_{-10}4_{-6}][1_{0}2_{-3}4_{-14}4_{-10}]=[1_{0}2_{-3}4_{-10}][4_{-14}4_{-10}4_{-6}]+[1_{0}2_{-3}3_{-12}3_{-8}],\\
&[4_{-14}4_{-10}4_{-6}][1_{0}2_{-3}4_{-18}4_{-14}4_{-10}]=[1_{0}2_{-3}4_{-14}4_{-10}][4_{-18}4_{-14}4_{-10}4_{-6}]+[1_{0}2_{-3}3_{-16}3_{-12}3_{-8}].\\
\end{align*}
\end{gather}
\end{example}

Moreover, we have the following theorem.
\begin{theorem}\label{irreducible}
For each equation in Theorem $\ref{M-system of type F4}$, all summands on the right hand side are simple.
\end{theorem}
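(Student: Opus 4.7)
The plan is to prove, for each summand $[M_1][M_2]$ on the right-hand side of any equation in Theorem \ref{M-system of type F4}, that the corresponding tensor product $L(m_1)\otimes L(m_2)$ of the simple modules with highest $l$-weights $m_1,m_2$ is itself simple; equivalently $[M_1][M_2]=[L(m_1 m_2)]$ in $\rep(U_q\widehat{\mathfrak{g}})$. By the injectivity of $\chi_q$ and Lemma \ref{well-known}, this reduces to showing that $m_1 m_2$ is the unique dominant monomial in the product $\chi_q(M_1)\chi_q(M_2)$. Indeed, writing $[M_1][M_2]=\sum_i n_i[L(\mu_i)]$ with $n_i\in\mathbb{Z}_{\geq 0}$, each summand contributes at least one dominant monomial (namely $\mu_i$) to the product, so uniqueness forces a single term with $\mu_i=m_1 m_2$ and $n_i=1$.

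The key input is Theorem \ref{special} together with Theorem \ref{Her 07}: every factor $M_j$ appearing on the right-hand sides is special, so $m_j$ is the only dominant monomial in $\chi_q(M_j)$. Hence any other candidate dominant monomial of the product would be of the form $(m_1 p_1)(m_2 p_2)$ with $p_1,p_2\in\mathcal{Q}^-$ and $(p_1,p_2)\neq (1,1)$.

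To rule these out I will use right-negativity: every $A_{i,a}^{-1}$ is right-negative, a product of right-negative monomials is right-negative, and any monomial $\leq$ a right-negative monomial is right-negative. In each summand of Theorem \ref{M-system of type F4}, the highest $l$-weights $m_1,m_2$ are explicit products of the variables $i_s=Y_{i,aq^s}$, and their spectral-parameter supports are arranged so that exactly one of them, say $m_2$, reaches the maximal index $L$. If $p_2\neq 1$, then $m_2 p_2$ is right-negative strictly past position $L$, where $m_1 p_1$ contributes nothing positive, and the product is not dominant. If $p_2=1$ and $p_1\neq 1$, then the rightmost negative factor of $m_1 p_1$ lies outside the support of $m_2$, and again the product is not dominant. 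A case-by-case unpacking of the equations in Theorem \ref{M-system of type F4}, in which the specific spectral-parameter windows of the two factors have to be compared, then closes the argument.

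The main obstacle is the group of equations involving the modules $\widetilde{\mathcal{P}}^{(s)}_{0,0,l,k}$ and $\widetilde{\mathcal{P}}^{(s)}_{0,m,0,k}$. By definition, their highest $l$-weights are themselves products of the highest $l$-weights of several Kirillov--Reshetikhin modules at shifted spectral parameters, so the right-end analysis must be carried out more carefully: one has to check that, after combining all the Kirillov--Reshetikhin pieces, a single factor still carries the extremal spectral parameter, and that the right-negativity cascade propagates through the several $p_i$-components. This ultimately reduces to the same right-negativity argument applied to the Kirillov--Reshetikhin constituents, all of which are special by Theorem \ref{special}.
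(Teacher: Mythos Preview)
Your reduction to ``$m_1 m_2$ is the unique dominant monomial in $\chi_q(M_1)\chi_q(M_2)$'' is valid, and it is precisely how the paper disposes of the \emph{second} summand in every equation: Lemma~\ref{lemma1} (Tables~\ref{dominant monomials in the M-system of type F_4 1} and~\ref{dominant monomials in the M-system of type F_4 2}) records that each second summand has a single dominant monomial, hence is special, hence simple.

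The gap is in the \emph{first} summand. Your right-negativity argument cannot succeed there, because the first summand on each right-hand side has several dominant monomials, not one. For instance, in equation~(\ref{eqn 1}) the product $\chi_q(\mathcal{T}_{0,0,l,k-1}^{(-2)})\chi_q(\mathcal{T}_{0,0,l-1,k+1}^{(-2)})$ contains the dominant monomials $M_r=M_0\prod_{i=0}^{r-1}A_{1,aq^{-2l-1-2i}}^{-1}$ for all $0\le r\le k-1$. Concretely, the non-highest monomial $m_2 A_{1,aq^{-2l-1}}^{-1}\in\chi_q(\mathcal{T}_{0,0,l-1,k+1}^{(-2)})$ is right-negative, but its rightmost negative factor $1_{-2l-2}^{-1}$ is cancelled by the factor $1_{-2l-2}$ present in $m_1=T_{0,0,l,k-1}^{(-2)}$; so your claim that ``the rightmost negative factor\ldots\ lies outside the support of $m_2$'' is false here. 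The same phenomenon occurs in every equation (see the $M_r$ columns in Table~\ref{dominant monomials in the M-system of type F_4 1}).

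The paper's argument for the first summand is therefore different in kind. Write the first summand as $\sum_{r\ge 0} c_r[L(M_r)]$; since the equation in Theorem~\ref{M-system of type F4} expresses the left-hand side as the sum of the two right-hand summands, any composition factor $L(M_r)$ of the first summand satisfies $\chi_q(L(M_r))\le \chi_q(\text{LHS})$. For each $r\ge 1$ the paper exhibits (Table~\ref{irreducible in the system of type F_{4}}) an explicit monomial $n_r\in\chi_q(L(M_r))$, obtained by a short $U_{q_i}\widehat{\mathfrak{sl}}_2$ argument from $M_r$, and then checks via the Frenkel--Mukhin algorithm that $n_r\notin\chi_q(\text{LHS})$. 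This forces $c_r=0$ for $r\ge 1$, so the first summand equals $[L(M_0)]$ and is simple. Your proposal is missing this exclusion step entirely.
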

Theorem \ref{irreducible} will be proved in Section \ref{proof irreducible}.
\subsection{A system corresponding to the system in Theorem \ref{M-system of type F4}}
Given $k, l, m, n \in \mathbb{Z}_{\geq 0}, s\in \mathbb{Z}$, let
\[
\mathfrak{m}_{k, l, m, n} =\res(\mathcal{T}_{k, l, m, n}^{(s)}) \ (\text{resp. } \widetilde{\mathfrak{m}}_{k, l, m, n} =\res(\widetilde{\mathcal{T}}_{k, l, m, n}^{(s)}))
\]
be the restriction of $\mathcal{T}_{k, l, m, n}^{(s)}$\ (resp. $\widetilde{\mathcal{T}}_{k, l, m, n}^{(s)}$) to $U_q \mathfrak{g}$. It is clear that $\res(\mathcal{T}_{k, l, m, n}^{(s)})$ and $\res(\widetilde{\mathcal{T}}_{k, l, m, n}^{(s)})$ do not depend on $s$. Let $\chi(M)$\ (resp. $\chi(\widetilde{M})$) be the character of a $U_q \mathfrak{g}$-module $M$\ (resp. $\widetilde{M}$). By replacing each $[\mathcal{T}_{n, m, l, k}^{(s)}]$ (resp. $[\mathcal{\widetilde{T}}_{n, m, l, k}^{(s)}]$) in the system in Theorem \ref{M-system of type F4} with  $\chi(\mathfrak{m}_{n, m, l, k})$ (resp. $\chi(\widetilde{\mathfrak{m}}_{n, m, l, k})$), we obtain a system of equations consisting of the characters of $U_{q}\mathfrak{g}$-modules. The following are two equations in the system.
\begin{equation*}
\begin{split}
&\chi(\mathfrak{m}_{0,0,l-1,k})\chi(\mathfrak{m}_{0,0,l,k})=\chi(\mathfrak{m}_{0,0,l,k-1})\chi(\mathfrak{m}_{0,0,l-1,k+1})+\chi(\mathfrak{m}_{0,0,k+l,0})\chi(\mathfrak{m}_{0,0,l-1,0}),\\
&\chi(\widetilde{\mathfrak{m}}_{n,m-1,0,0})\chi(\widetilde{\mathfrak{m}}_{n,m,0,0})=\chi(\widetilde{\mathfrak{m}}_{n-1,m,0,0})\chi(\widetilde{\mathfrak{m}}_{n+1,m-1,0,0})+\chi(\widetilde{\mathfrak{m}}_{0,m-1,0,0})\chi(\widetilde{\mathfrak{m}}_{0,n+m,0,0}).
%&\chi_{q}(r_{n,0,l-2,0}^{(s+4)})\chi_{q}(r_{n,0,l,0}^{(s)})=\chi_{q}(r_{n-1,0,l,0}^{(s+4)})\chi_{q}(r_{n+1,0,l-2,0}^{(s)})+\chi_{q}(r_{0,0,l-2,0}^{(s+4n+4)})\chi_{q}(r_{0,n,l,0}^{(s)}),\\
%&\chi_{q}(r_{0,m,l-2,0}^{(s+4)})\chi_{q}(r_{0,m,l,0}^{(s)})=\chi_{q}(r_{0,m-1,l,0}^{(s+4)})\chi_{q}(r_{0,m+1,l-2,0}^{(s)})+\chi_{q}(r_{m,0,0,l-2}^{(s+4)})\chi_{q}(r_{0,0,l+2m,0}^{(s)}),\\
%&\chi_{q}(r_{n,m-1,l,0}^{(s+4)})\chi_{q}(r_{n,m,l,0}^{(s)})=\chi_{q}(r_{n-1,m,l,0}^{(s+4)})\chi_{q}(r_{n+1,m-1,l,0}^{(s)})+\chi_{q}(r_{0,m-1,l,0}^{(s+4n+4)})\chi_{q}(r_{0,n+m,l,0}^{(s)});\\
\end{split}
\end{equation*}

\section{Relation between the system in Theorem \ref{M-system of type F4} and cluster algebras}\label{Relation between the M-systems and cluster algebras}

In this section, we will show that the equations in the system in Theorem \ref{M-system of type F4} correspond to mutations in some cluster algebra $\mathscr{A}$. Moreover, every minimal affinization in the system in Theorem \ref{M-system of type F4}  corresponds to a cluster variable in the cluster algebra $\mathscr{A}$.
\subsection{Definition of a cluster algebra $\mathscr{A}$} \label{definition of cluster algebra A}
Let $I=\{1, 2, 3, 4\}$ and
\begin{align*}
S&=\{-2u \mid u\in \mathbb{Z}_{\geq 0}\}, \\
S'&=\{-2u-1 \mid u\in \mathbb{Z}_{\geq 0}\}.
\end{align*}
Let
\begin{align*}
V=(\{1\}\times S)\cup (\{2\}\times S')\cup (\{3\}\times S)\cup (\{4\}\times S).
\end{align*}
We define $Q$ with vertex set $V$ as follows. The arrows of $Q$ from the vertex $(i,r)$ to the vertex $(j,s)$ if and only if $b_{ij}\neq 0$ and $s=r-b_{ij}+d_{i}-d_{j}$. The quiver $Q$ is the same as the quiver $G^-$ of type $F_4$ defined in
\cite{HL13}.

Let ${\bf t} = {\bf t}_1 \cup {\bf t}_2$, where
\begin{align*}
{\bf t}_1=\{t_{0, 0, l, 0}^{(-2)},\ t_{0, 0, 0, k}^{(-4)} \mid k, l \in \mathbb{Z}_{\geq 1}\},
\end{align*}
\begin{gather}
\begin{align*}
{\bf t}_2=\{\widetilde{t}_{n, 0, 0, 0}^{(-4n+4)},\ \widetilde{t}_{0, m, 0, 0}^{(-4m)}, \ \widetilde{t}_{n, 0, 0, 0}^{(-4n+2)},\ \widetilde{t}_{0, m, 0, 0}^{(-4m+2)},\ \widetilde{t}_{0, 0, l, 0}^{(-2l-2)},\ \widetilde{t}_{0, 0, 0, k}^{(2k-2)} \mid k, l, m, n \in \mathbb{Z}_{\geq 1}\}.
%\bigcup \{\widetilde{t}_{n, 0, 0, 0}^{(-4n+2)},\ \widetilde{t}_{0, m, 0, 0}^{(-4m+2)},\ \widetilde{t}_{n, 0, 0, 0}^{(-4n)},\ \widetilde{t}_{0, m, 0, 0}^{(-4m-2)},\ \widetilde{t}_{n, 0, 0, 0}^{(-4n-2)} \mid m, n \in \mathbb{Z}_{\geq 1}\}.
\end{align*}
\end{gather}
Let $\mathscr{A}$ be the cluster algebra defined by the initial seed $({\bf t}, Q)$. By Definition \ref{definition of cluster algebras of infinite rank}, $\mathscr{A}$ is the $\mathbb{Q}$-subalgebra of the field of rational functions $\mathbb{Q}({\bf t})$ generated by all the elements obtained from some elements of $\bf t$ via a finite sequence of seed mutations.

\subsection{Mutation sequences}
We use $``C_{1}"$, $``C_{2}"$, $``C_{3}"$, $``C_{4}"$, $``C_{5}"$, $``C_{6}"$ to denote the column of vertices $(1, 0)$, $(1, 2)$, $\ldots$, $(1, -2u)$, $\cdots$, the column of vertices $(2, -1)$, $(2, -3)$, $\ldots$, $(2, -2u-1)$, $\cdots$, the column of vertices $(3, 0)$, $(3, -4)$, $\ldots$, $(3, -4u)$, $\cdots$, the column of vertices $(3, -2)$, $(3, -6)$, $\ldots$, $(3, -4u-2)$, $\cdots$, the column of vertices $(4, 0)$, $(4, -4)$, $\ldots$, $(4, -4u)$, $\cdots$, the column of vertices $(4, -2)$, $(4, -6)$, $\ldots$, $(4, -4u-2)$, $\cdots$, respectively in $Q$.

By saying that we mutate at the column $C_i$, $i \in \{1, 2, 3, 4, 5, 6\}$, we mean that we mutate the vertices of $C_i$ as follows. First we mutate at the first vertex in this column, then the second vertex, and so on until the vertex at infinity. By saying that we mutate $(C_{i_0}, C_{i_1}, \ldots, C_{i_u} )$, where $i_j \in \{1, 2, 3, 4, 5, 6\}$, $j=0,1,2,\ldots, u$, we mean that we first mutate the column $C_{i_1}$, then the column $C_{i_2}$, and so on up to the column $C_{i_u}$.

For $k, l, m, n \in \mathbb{Z}_{\geq 1}$, we define some variables
\begin{align} \label{cluster variables}
\begin{split}
& t_{0, 0, l, k}^{(-2)}, \ \widetilde{t}_{n,m,0,0}^{(-4n-4m)}, \ \widetilde{t}_{n,m,0,0}^{(-4n-4m+2)}, \ \widetilde{t}_{n,0,l,0}^{(-4n-2l-2)}, \ \widetilde{t}_{0,m,l,0}^{(-4m-2l-2)}, \  \ \widetilde{t}_{n,m,l,0}^{(-4n-4m-2l-2)},  \\
&\widetilde{t}_{n,0,l,k}^{(-4n-2l-2k-2)} \ (k \leq 2),  \ \widetilde{t}_{0,m,l,k}^{(-4m-2l-2k-2)} \ (k \leq 2),  \ \widetilde{t}_{n,m,0,k}^{(-4n-4m-2k-2)} \ (k \leq 2), \\
&\widetilde{t}_{n,m,l,k}^{(-4n-4m-2l-2k-2)} \ (k \leq 2), \ \widetilde{p}^{(-2l-2k-2)}_{0,0,l,k}, \ \widetilde{p}^{(-4m-2k-2)}_{0,m,0,k},\  \widetilde{t}_{n,0,0,k}^{(-4n-2k-2)},
\end{split}
\end{align}
recursively as follows. The variables in ${\bf t}$ are already defined.

We mutate the first vertex of the first $C_{1}$ in $(C_1, C_1, \ldots, C_1)$ from the initial seed, and define $t_{0,0,0,1}^{(-2)}=t'{_{0,0,0,1}^{(-4)}}$, we obtain a quiver ($Q_{11}$). Therefore
\begin{eqnarray}
t_{0, 0, 0, 1}^{(-2)} = {t'}_{0, 0, 0, 1}^{(-4)} = \frac{t_{0, 0, 0, 2}^{(-4)}+t_{0, 0, 1, 0}^{(-2)}}{t_{0, 0, 0, 1}^{(-4)}}.
\end{eqnarray}
We mutate the second vertex of the first $C_{1}$ in $(C_1, C_1, \ldots, C_1)$ and define $t_{0, 0, 0, 2}^{(-2)} = t'{_{0, 0, 0, 2}^{(-4)}}$, the quiver ($Q_{11}$) becomes a quiver ($Q_{12}$). Therefore
\begin{eqnarray}
t_{0, 0, 0, 2}^{(-2)} = t'{_{0, 0, 0, 2}^{(-4)}} = \frac{t_{0, 0, 0, 3}^{(-4)} t_{0, 0, 0, 1}^{(-4)} + t_{0, 0, 2, 0}^{(-2)}}{t_{0, 0, 0, 2}^{(-4)}}.
\end{eqnarray}
We continue this procedure and mutate the vertices of the first $C_1$ in $(C_1, C_1, \ldots, C_1)$ and define $t_{0, 0, 0, k}^{(-2)} = t'{_{0, 0, 0, k}^{(-4)}}$ ($k=3,4,\ldots$) recursively. Therefore
\begin{eqnarray}
t_{0, 0, 0, k}^{(-2)} = t'{_{0, 0, 0, k}^{(-4)}}= \frac{t_{0, 0, 0, k+1}^{(-4)}t_{0, 0, 0, k-1}^{(-4)}+t_{0, 0, k, 0}^{(-2)}}{t_{0, 0, 0, k}^{(-4)}}, \quad k = 3, 4, \ldots
\end{eqnarray}
Now we finish the mutation of the first $C_1$ in $(C_1, C_1, \ldots, C_1)$.

We start to mutate the second $C_1$ in $(C_1, C_1, \ldots, C_1)$. We mutate the first vertex of the second $C_1$ in $(C_1, C_1, \ldots, C_1)$ and define $t_{0, 0, 1, 1}^{(-2)} = {t'}_{0, 0, 0, 1}^{(-2)}$, we obtain a quiver ($Q_{21}$). Therefore
\begin{eqnarray}\label{eqn13}
t_{0, 0, 1, 1}^{(-2)} = {t'}_{0, 0, 0, 1}^{(-2)} = \frac{t_{0, 0, 0, 2}^{(-2)}t_{0, 0, 1, 0}^{(-2)}+t_{0, 0, 2, 0}^{(-2)}}{t_{0, 0, 0, 1}^{(-2)}}.
\end{eqnarray}
We mutate the second vertex of the second $C_{1}$ in $(C_1, C_1, \ldots, C_1)$ and define $t_{0, 0, 1, 2}^{(-2)} = t'{_{0, 0, 0, 2}^{(-2)}}$, the quiver ($Q_{21}$) becomes a quiver ($Q_{22}$). Therefore
\begin{eqnarray}\label{eqn14}
t_{0, 0, 1, 2}^{(-2)} = t'{_{0, 0, 0, 2}^{(-2)}} = \frac{t_{0, 0, 0, 3}^{(-2)}t_{0, 0, 1, 1}^{(-2)}+t_{0, 0, 3, 0}^{(-2)}}{t_{0, 0, 0, 2}^{(-2)}}.
\end{eqnarray}
We continue this procedure and mutate the vertices of the second $C_1$ in $(C_1, C_1, \ldots, C_1)$ and define $t_{0, 0, 1, k}^{(-2)} = t'{_{0, 0, 0, k}^{(-2)}}$ ($k=3,4,\ldots$) recursively. Therefore
\begin{eqnarray}\label{eqn15}
t_{0, 0, 1, k}^{(-2)} = t'{_{0, 0, 0, k}^{(-2)}}= \frac{t_{0, 0, 0, k+1}^{(-2)}t_{0, 0, 1, k-1}^{(-2)}+t_{0, 0, k+1, 0}^{(-2)}}{t_{0, 0, 0, k}^{(-2)}}, \quad k = 3, 4, \ldots
\end{eqnarray}
Now we finish the mutation of the second $C_1$ in $(C_1, C_1, \ldots, C_1)$.

We start to mutate the third $C_1$ in $(C_1, C_1, \ldots, C_1)$. We mutate the first vertex of the third $C_1$ in $(C_1, C_1, \ldots, C_1)$ and define $t_{0, 0, 2, 1}^{(-2)} = {t'}_{0, 0, 1, 1}^{(-2)}$, we obtain a quiver ($Q_{31}$). Therefore
\begin{eqnarray}\label{eqn16}
t_{0, 0, 2, 1}^{(-2)} = {t'}_{0, 0, 1, 1}^{(-2)}= \frac{t_{0, 0, 1, 2}^{(-2)}t_{0, 0, 2, 0}^{(-2)}+t_{0, 0, 1, 0}^{(-2)}t_{0, 0, 3, 0}^{(-2)}}{t_{0, 0, 1, 1}^{(-2)}}.
\end{eqnarray}
We mutate the second vertex of the third $C_1$ in $(C_1, C_1, \ldots, C_1)$ and define $t_{0, 0, 2, 2}^{(-2)} = {t'}_{0, 0, 1, 2}^{(-2)}$, the quiver ($Q_{31}$) becomes a quiver ($Q_{32}$). Therefore
\begin{eqnarray}\label{eqn17}
t_{0, 0, 2, 2}^{(-2)} = {t'}_{0, 0, 1, 2}^{(-2)}=\frac{t_{0, 0, 1, 3}^{(-2)}t_{0, 0, 2, 1}^{(-2)}+t_{0, 0, 1, 0}^{(-2)}t_{0, 0, 4, 0}^{(-2)}}{t_{0, 0, 1, 2}^{(-2)}}.
\end{eqnarray}
We continue this procedure and mutate vertices of the third $C_1$ in $(C_1, C_1, \ldots, C_1)$ and define $t_{0, 0, 2, k}^{(-2)} = t'{_{0, 0, 1, k}^{(-2)}}$ ($k=3,4,\ldots$) recursively. Therefore
\begin{eqnarray}\label{eqn18}
t_{0, 0, 2, k}^{(-2)} = t'{_{0, 0, 1, k}^{(-2)}}=\frac{t_{0, 0, 1, k+1}^{(-2)} t_{0, 0, 2, k-1}^{(-2)}+ t_{0, 0, 1, 0}^{(-2)}t_{0, 0, k+2, 0}^{(-2)}}{t_{0, 0, 1, k}^{(-2)}}, \quad k = 3,4, \ldots
\end{eqnarray}
Now we finish the mutation of the third $C_1$ in $(C_1, C_1, \ldots, C_1)$. We continue this procedure and mutate the $(d+1)$-th $C_1$ $(d= 3, 5, \ldots, l)$ in order. We define $t_{0, 0, d, k}^{(-2)} = t'{_{0, 0, d-1, k}^{(-2)}}$, where $(0, 0, d, k)$=$\{(0, 0, 3, 1), (0, 0, 3, 2), (0, 0, 3, 3), (0, 0, 3, 4), \ldots; (0, 0, 4, 1), (0, 0, 4, 2), (0, 0, 4, 3)$,\\ $(0, 0, 4, 4) \ldots; (0, 0, 5, 1), (0, 0, 5, 2), (0, 0, 5, 3), (0, 0, 5, 4), \ldots ;  (0, 0, l, 1), (0, 0, l, 2), (0, 0, l, 3), (0, 0, l, \\ 4), \ldots \}$ recursively. Therefore
\begin{eqnarray}\label{eqn19}
t_{0, 0, d, k}^{(-2)} = t'{_{0, 0, d-1, k}^{(-2)}} = \frac{t_{0, 0, d-1, k+1}^{(-2)}t_{0, 0, d,k-1}^{(-2)}+t_{0, 0, d-1, 0}^{(-2)}t_{0, 0, k+d, 0}^{(-2)}}
{t{_{0, 0, d-1, k}^{(-2)}}}.
\end{eqnarray}

We write the definition of the variables in (\ref{cluster variables}) and the corresponding mutation sequences in Table \ref{mutation sequence of type F_{4}}.

In Table \ref{mutation sequence of type F_{4}}, we use
\[
\underset{m }{\underbrace{C_{i_1}, C_{i_2},\cdots, C_{i_u}, C_{i_1}, C_{i_2},\cdots, C_{i_u}, \cdots, C_{i_1}, C_{i_2},\cdots, C_{i_u}}},
\]
where $m \in \mathbb{Z}_{\geq 1}$, $u \in \mathbb{Z}_{\geq 1}$, to denote the mutation sequences
\[
C_{i_1}, C_{i_2},\cdots, C_{i_u}, C_{i_1}, C_{i_2},\cdots, C_{i_u}, \cdots, C_{i_1}, C_{i_2},\cdots, C_{i_u},
\]
where the number of $C_{i_{j}}$ $(j \in \{1, 2, \cdots, u\})$ is $m$.

\begin{table}[H] \resizebox{.6\width}{.6\height}{
\begin{tabular}{|c|c|c|}
\hline %
Mutation sequences & Definition of variables in (\ref{cluster variables}) and mutation equations    \\
\hline %
$\substack{(\underset{l+1}{\underbrace{C_{1}, C_{1}, \ldots,  C_{1}}})}$
&$\substack{ t_{0, 0, l, k}^{(-2)} = t'{_{0, 0, l-1, k}^{(-2)}}=\frac{t_{0, 0, l-1, k+1}^{(-2)}t_{0, 0, l, k-1}^{(-2)}+t_{0, 0, l-1, 0}^{(-2)}t_{0, 0, k+l, 0}^{(-2)}}{t_{0, 0, l-1, k}^{(-2)}}\quad (1) }$\\
\hline%
$\substack{(\underset{m+1}{\underbrace{C_{5}, C_{5}, \ldots, C_{5}}})\\
(\underset{m}{\underbrace{C_{6}, C_{6}, \ldots, C_{6}}})}$
& $\substack{\widetilde{t}_{n, m, 0, 0}^{(-4n-4m)}=\widetilde{t'}_{n, m-1, 0, 0}^{(-4n-4m+4)}= \frac{\widetilde{t}_{n+1, m-1, 0, 0}^{(-4n-4m)} \widetilde{t}_{n-1, m, 0, 0}^{(-4n-4m+4)}+\widetilde{t}_{0, m-1, 0, 0}^{(-4m+4)} \widetilde{t}_{0, n+m, 0, 0}^{(-4n-4m)}}{\widetilde{t}_{n, m-1, 0, 0}^{(-4n-4m+4)}} \quad (2) \\
\widetilde{t}_{n, m, 0, 0}^{(-4n-4m+2)} = \widetilde{t}'{_{n, m-1, 0, 0}^{(-4n-4m+6)}}=
\frac{\widetilde{t}_{n+1, m-1, 0, 0}^{(-4n-4m+2)} \widetilde{t}_{n-1, m, 0, 0}^{(-4n-4m+6)}+\widetilde{t}_{0, m-1, 0, 0}^{(-4m+6)} \widetilde{t}_{0, n+m, 0, 0}^{(-4n-4m+2)}}{\widetilde{t}{_{n, m-1, 0, 0}^{(-4n-4m+6)}}} \quad (3) }$\\
\hline%
$\substack{(C_5, \underset{\frac{l+1}{2},\ l \text{ is odd}}{\underbrace{C_{4}, C_{5}, \ldots, C_{4}, C_{5}}}) \\
(\underset{\frac{l+2}{2},\ l \text{ is even} }{\underbrace{C_{3}, C_{6}, \ldots, C_{3}, C_{6}}})}$
&$\substack{\widetilde{t}_{n, 0, 1, 0}^{(-4n-4)} = \widetilde{t'}{_{n, 0, 0, 0}^{(-4n)}}= \frac{\widetilde{t}_{n+1, 0, 0, 0}^{(-4n-4)} \widetilde{t}_{n-1, 0, 1, 0}^{(-4n)}+\widetilde{t}_{0, n, 1, 0}^{(-4n-4)}}{\widetilde{t}_{n, 0, 0, 0}^{(-4n)}} \quad (4)\\
\widetilde{t}_{n, 0, l, 0}^{(-4n-2l-2)} = \widetilde{t'}_{n, 0, l-2, 0}^{(-4n-2l+2)}= \frac{\widetilde{t}_{n+1, 0, l-2, 0}^{(-4n-2l-2)} \widetilde{t}_{n-1, 0, l, 0}^{(-4n-2l+2)}+\widetilde{t}_{0, n, l, 0}^{(-4n-2l-2)} \widetilde{t}_{0, 0, l-2, 0}^{(-2l+2)}}{\widetilde{t}_{n, 0, l-2, 0}^{(-4l-2l+2)}} \quad (5)\\
\widetilde{t}_{0, m, 1, 0}^{(-4m-4)} = \widetilde{t'}_{0, m, 0, 0}^{(-4m)}= \frac{\widetilde{t}_{0, m+1, 0, 0}^{(-4m-4)} \widetilde{t}_{0, m-1, 1, 0}^{(-4m)}+\widetilde{t}_{0, 0, 2m+1, 0}^{(-4m-4)} \widetilde{t}_{m, 0, 0, 0}^{(-4m)}}{\widetilde{t}_{0, m, 0, 0}^{(-4m)}} \quad (6)\\
\widetilde{t}_{0, m, l, 0}^{(-4m-2l-2)} = \widetilde{t'}_{0, m, l-2, 0}^{(-4m-2l+2)}= \frac{\widetilde{t}_{0, m+1, l-2, 0}^{(-4m-2l-2)} \widetilde{t}_{0, m-1, l, 0}^{(-4m-2l+2)}+\widetilde{t}_{0, 0, 2m+l, 0}^{(-4m-2l-2)} \widetilde{t}_{m, 0, l-2, 0}^{(-4m-2l+2)}}{\widetilde{t}_{0, m, l-2, 0}^{(-4m-2l+2)}} \quad (7)}$\\
\hline%
$\substack{(C_{5}, \underset{\frac{l+1}{2},\ l \text{ is odd}}{\underbrace{C_{4}, C_{5}, \ldots, C_{4}, C_{5}}}, \underset{m}{\underbrace{C_{5}, \ldots, C_{5}}})\\
(\underset{\frac{l+2}{2},\ l \text{ is even}}{\underbrace{C_{3}, C_{6}, \ldots, C_{3}, C_{6}}}, \underset{m}{\underbrace{C_6, \ldots, C_6}})}$
& $\substack{\widetilde{t}_{n, m, l, 0}^{(-4n-4m-2l-2)}=\widetilde{t'}_{n, m-1, l, 0}^{(-4n-4m-2l+2)}
=\frac{\widetilde{t}_{n+1, m-1, l, 0}^{(-4n-4m-2l-2)} \widetilde{t}_{n-1, m, l, 0}^{(-4n-4m-2l+2)}
+\widetilde{t}_{0, n+m, l, 0}^{(-4n-4m-2l-2)} \widetilde{t}_{0, m-1, l, 0}^{(-4m-2l+2)}}{\widetilde{t}_{n, m-1, l, 0}^{(-4n-4m-2l+2)}} \quad (8)}$\\
\hline%
$\substack{ (C_3, C_6, C_2, C_5, C_4, C_5) \\
\\  (C_3, C_6, C_2, C_5, C_4, C_5, C_2, C_3, C_6) }$
&$\substack{\widetilde{t}_{n,0,0,k}^{(-4n-2k-2)}=\widetilde{t'}_{n,0,0,0}^{(-4n-2k+2)}=\frac{\widetilde{t}_{n-1,0,0,k}
^{(-4n-2k+2)}\widetilde{t}_{n+1,0,0,0}^{(-4n-2k-2)}+\widetilde{t}_{0,n,0,k}^{(-4n-2k-2)}}{\widetilde{t}_{n,0,0,0}^{(-4n-2k+2)}},
\quad (9) \\
\widetilde{t}_{0,m,0,k}^{(-4m-2k-2)}=\widetilde{t'}_{0,m,0,0}^{(-4m-2k+2)} =\frac{\widetilde{t}_{0,m-1,0,k}^{(-4m-2k+2)} \widetilde{t}_{0,m+1,0,0}^{(-4m-2k-2)}+\widetilde{t}_{0,0,2m,k}^{(-4m-2k-2)}\widetilde{t}_{m,0,0,0}^{(-4m-2k+2)}}
{\widetilde{t}_{0,m,0,0}^{(-4m-2k+2)}},\ k=1,2, \quad (10)}$\\
\hline%
$\substack{ (C_3, C_6, C_2, \underset{\frac{l+1}{2},\ l \text{ is odd}}{\underbrace{C_3, C_6, \ldots, C_3, C_6}}) \\
(C_3, C_6, C_2, C_5, C_4, C_5, \underset{\frac{l+2}{2},\ l \text{ is even}}{\underbrace{C_4, C_5, \ldots, C_4, C_5}})\\\\
(C_3, C_6, C_2, C_5, C_4, C_5, C_2, \underset{\frac{l+1}{2},\ l \text{ is odd}}{\underbrace{C_4, C_5,\ldots, C_4, C_5}})\\
(C_3, C_6, C_2, C_5, C_4, C_5, C_2, C_3, C_6, \underset{\frac{l}{2},\ l \text{ is even}}{\underbrace{C_3, C_6, \ldots, C_3, C_6}})}$
&$\substack{\widetilde{t}_{n,0,1,k}^{(-4n-2k-4)} = \widetilde{t'}_{n,0,0,k-1}^{(-4n-2k)}
= \frac{\widetilde{t}_{n-1,0,1,k}^{(-4n-2k)}\widetilde{t}_{n+1,0,0,k-1}^{(-4n-2k-4)}+\widetilde{t}_{0,n,1,k}^{(-4n-2k-4)}
\widetilde{t}_{0,0,0,k-1}^{(-2k)}}{\widetilde{t}_{n,0,0,k-1}^{(-4n-2n)}},\ k=1,2,\ l=1; \quad (11)\\
\widetilde{t}_{n,0,l,k}^{(-4n-2l-2k-2)}=\widetilde{t'}_{n,0,l-2,k}^{(-4n-2l-2k+2)}=\frac{\widetilde{t}_{n-1,0,l,k}^{(-4n-2l-2k+2)}
\widetilde{t}_{n+1,0,l-2,k}^{(-4n-2l-2k-2)}+\widetilde{t}_{0,n,l,k}^{(-4n-2l-2k-2)}\widetilde{t}_{0,0,l-2,k}^{(-2l-2k+2)}} {\widetilde{t}_{n,0,l-2,k}^{(-4n-2l-2n+2)}},\ k=1,2,\ l\geq 2,  \quad (12)\\
\widetilde{t}_{0,m,1,k}^{(-4m-2k-4)}=\widetilde{t'}_{0,m,0,k-1}^{(-4m-2k)}=\frac{\widetilde{t}_{0,m-1,1,k}^{(-4m-2k)}
\widetilde{t}_{0,m+1,0,k-1}^{(-4m-2k-4)}+\widetilde{t}_{0,0,2m+1,k}^{(-4m-2k-4)}\widetilde{t}_{m,0,0,k-1}^{(-4m-2k)}}
{\widetilde{t}_{0,m,0,k-1}^{(-4m-2k)}},\ k=1,2,\ l=1, \quad (13)\\
\widetilde{t}_{0,m,l,k}^{(-4m-2l-2k-2)}=\widetilde{t'}_{0,m,l-2,k}^{(-4m-2l-2k+2)}=\frac{\widetilde{t}_{0,m-1,l,k}^{(-4m-2l-2k+2)}
\widetilde{t}_{0,m+1,l-2,k}^{(-4m-2l-2k-2)}+\widetilde{t}_{0,0,2m+l,k}^{(-4m-2l-2k-2)}\widetilde{t}_{m,0,l-2,k}^{(-4m-2l-2k+2)}}
{\widetilde{t}_{0,m,l-2,k}^{(-4m-2l-2k+2)}},\  k=1,2,\ l\geq 2, \quad (14)}$\\
\hline%
$\substack{(C_3, C_6, C_2, C_5, C_4, C_5, \underset{\frac{m+1}{2},\ m \text{ is odd} }{\underbrace{C_5, \ldots, C_5}}) \\
(C_3, C_6, C_2, C_3, C_6, \underset{\frac{m}{2},\ m \text{ is even}}{\underbrace{C_6, \ldots, C_6}})\\
(C_3, C_6, C_2, C_5, C_4, C_5, C_2, C_3, C_6, \underset{\frac{m+1}{2},\ m \text{ is odd}}{\underbrace{C_6, \ldots, C_6}})\\
(C_3, C_6, C_2, C_5, C_4, C_5, C_2, C_4, C_5, \underset{\frac{m}{2},\ m \text{ is even}}{\underbrace{C_5, \ldots, C_5}})}$
&$\substack{\widetilde{t}_{n,m,0,k}^{(-4n-4m-2k-2)}=\widetilde{t'}_{n,m-1,0,k}^{(-4n-4m-2k+2)}
=\frac{\widetilde{t}_{n-1,m,0,k}^{(-4n-4m-2k+2)}\widetilde{t}_{n+1,m-1,0,k}^{(-4n-4m-2k-2)}+\widetilde{t}_{0,n+m,0,k}^{(-4n-4m-2k-2)}\widetilde{t}_{0,m-1,0,k}^{(-4m-2k+2)}}
{\widetilde{t}_{n,m-1,0,k}^{(-4n-4m-2k+2)}},\ k=1,2, \quad (15)}$\\
\hline%
$\substack{(C_3, C_6, C_2, \underset{\frac{l+1}{2},\ l \text{ is odd}}{\underbrace{C_3, C_6, \ldots, C_3, C_6}}, \underset{m}{\underbrace{C_6, \ldots, C_6}})\\
(C_3, C_6, C_2, C_5, C_4, C_5, \underset{\frac{l}{2},\ l \text{ is even}}{\underbrace{C_4, C_5, \ldots, C_4, C_5}}, \underset{m}{\underbrace{C_5, \ldots, C_5}})\\
(C_3, C_6, C_2, C_5, C_4, C_5, C_2, C_4, C_5, \underset{\frac{l+1}{2},\ l \text{ is odd}}{\underbrace{C_4, C_5, \ldots, C_4, C_5}},  \underset{m}{\underbrace{C_5, \ldots, C_5}})\\
(C_3, C_6, C_2, C_5, C_4, C_5, C_2, C_3, C_6, \underset{\frac{l}{2},\ l \text{ is even}}{\underbrace{C_3, C_6, \ldots, C_3, C_6}}, \underset{m}{\underbrace{C_6, \ldots, C_6}})}$
& $\substack{\widetilde{t}_{n,m,l,k}^{(-4n-4m-2l-2k-2)}=\widetilde{t'}_{n,m-1,l,k}^{(-4n-4m-2l-2k+2)}
=\frac{\widetilde{t}_{n+1,m-1,l,k}^{(-4n-4m-2l-2k-2)}\widetilde{t}_{n-1,m,l,k}^{(-4n-4m-2l-2k+2)}+
\widetilde{t}_{0,n+m,l,k}^{(-4n-4m-2l-2k-2)}\widetilde{t}_{0,m-1,l,k}^{(-4m-2l-2k+2)}}
{\widetilde{t}_{n,m-1,l,k}^{(-4n-4m-2l-2k+2)}},\ k=1,2, \quad (16)}$\\
\hline%
$\substack{(C_{3}, C_{6}, C_{2}, C_{5}, C_{4}, C_{5}, C_{2}, \\
\underset{ k-2 }{\underbrace{C_{3}, C_{6}, C_{2}, C_{4}, C_{5}, C_{2}, \cdots, C_{3}, C_{6}, C_{2}, C_{4}, C_{5}, C_{2}}})}$
&$\substack{\widetilde{p}^{(-2k-4)}_{0,0,1,k}=\widetilde{p'}^{(-2k-2)}_{0,0,1,k-1}=\frac{\widetilde{t}^{(-2k+6)}_{0,0,0,k-4}
\widetilde{t}^{(-2k+2)}_{0,0,0,k-2}\widetilde{t}^{(-2k-2)}_{0,0,0,k}\widetilde{p}^{(-2k-4)}_{0,0,2,k-1}+\widetilde{t}^{(-2k+8)}_{0,0,0,k-5}
\widetilde{t}^{(-2k+4)}_{0,0,0,k-3} \widetilde{t}^{(-2k)}_{0,0,0,k-1} \widetilde{t}^{(-2k-4)}_{0,0,0,k+1}\widetilde{p}^{(-2k-2)}_{0,1,0,k-2}}
{\widetilde{p}^{(-2k-2)}_{0,0,1,k-1}}, \ k\geq3,\ l=1, \quad (17)\\
\widetilde{p}^{(-2k-2l-2)}_{0,0,l,k}=\widetilde{p'}^{(-2k-2l)}_{0,0,l,k-1}=\frac{\widetilde{p}^{(-2k-2l)}_{0,0,l-1,k}
\widetilde{p}^{(-2k-2l-2)}_{0,0,l+1,k-1} + \widetilde{t}^{(-2k+8)}_{0,0,0,k-5} \widetilde{t}^{(-2k-2l-2)}_{0,0,0,k+l} \widetilde{p}^{(-2k-2l)}_{0,\frac{l+1}{2},0,k-2}\widetilde{p}^{(-2k-2l+2)}_{0,\frac{l-1}{2},0,k-1}}
{\widetilde{p}^{(-2k-2l)}_{0,0,l,k-1}}, \  k\geq3,\ l \text{ is odd}, \ l\geq3, \quad (18)\\
\widetilde{p}^{(-2k-2l-2)}_{0,0,l,k}=\widetilde{p'}^{(-2k-2l)}_{0,0,l,k-1}=\frac{\widetilde{p}^{(-2k-2l)}_{0,0,l-1,k}
\widetilde{p}^{(-2k-2l-2)}_{0,0,l+1,k-1}+ \widetilde{t}^{(-2k+8)}_{0,0,0,k-5} \widetilde{t}^{(-2k-2l-2)}_{0,0,0,k+l} \widetilde{p}^{(-2k-2l+2)}_{0, \frac{l}{2},0,k-2} \widetilde{p}^{(-2k-2l)}_{0,\frac{l}{2},0,k-1}} {\widetilde{p}^{(-2k-2l)}_{0,0,l,k-1}}, \ k\geq3,\ l \text{ is even}, \ l \geq 2, \quad (19)}$\\
\hline%
$\substack{(C_{3}, C_{6}, C_{2}, C_{5}, C_{4}, C_{5}, \\
\underset{\frac{k-1}{2},\ k \text{ is odd}}{\underbrace{ C_{2}, C_{3}, C_{6}, C_{2}, C_{4}, C_{5}, \cdots, C_{2}, C_{3}, C_{6}, C_{2}, C_{4}, C_{5} }}) \\
(C_{3}, C_{6}, C_{2}, C_{5}, C_{4}, C_{5}, C_{2}, C_{3}, C_{6}, \\
\underset{\frac{k-2}{2},\ k \text{ is even }}{\underbrace{C_{2}, C_{4}, C_{5}, C_{2}, C_{3}, C_{6}, \cdots, C_{2}, C_{4}, C_{5}, C_{2}, C_{3}, C_{6}}})}$
&$\substack{\widetilde{p}^{(-2k-6)}_{0,1,0,k} = \widetilde{p'}^{(-2k-2)}_{0,1,0,k-2} = \frac{\widetilde{t}^{(-2k+2)}_{0,0,0,k-2} \widetilde{t}^{(s+4)}_{0,0,0,k} \widetilde{p}^{(-2k-6)}_{0,2,0,k-2}+\widetilde{p}^{(-2k-6)}_{0,0,2,k} \widetilde{t}^{(-2k-2)}_{1,0,0,k-2}}
{\widetilde{p}^{(-2k-2)}_{0,1,0,k-2}}, \ k\geq 3,\ m=1, \quad (20)\\
\widetilde{p}^{(-4m-2k-2)}_{0,m,0,k} =\widetilde{p'}^{(-4m-2k+2)}_{0,m,0,k-2}
=\frac{\widetilde{p}^{(-4m-2k+2)}_{0,m-1,0,k}\widetilde{p}^{(-4m-2k-2)}_{0,m+1,0,k-2} +
\widetilde{p}^{(-4m-2k-2)}_{0,0,2m,k} \widetilde{t}^{(-4m-2k+2)}_{m,0,0,k-2}}
{\widetilde{p}^{(-4m-2k+2)}_{0,m,0,k-2}}, \ k\geq 3, \ m\geq 2, \quad (21)\\
\widetilde{t}^{(-4n-2k-2)}_{n,0,0,k} = \widetilde{t'}^{(-4n-2k+2)}_{n,0,0,k-2}= \frac{\widetilde{t}^{(-4n-2k+2)}_{n-1,0,0,k} \widetilde{t}^{(-4n-2k-2)}_{n+1,0,0,k-2} + \widetilde{p}^{(-4n-2k-2)}_{0,n,0,k}}{\widetilde{t}^{(-2k-2n+2)}_{n,0,0,k-2}}, \ k\geq3. \quad (22)}$\\
\hline%
\end{tabular}}
\caption{Mutation sequences.}
\label{mutation sequence of type F_{4}}
\end{table}

\subsection{The equations in the system of type $F_4$ correspond to mutations in the cluster algebra $\mathscr{A}$}

Equation (1) corresponds to Equation (\ref{eqn 1}) in Theorem \ref{M-system of type F4}, Equations (2) and (3) correspond to Equation (\ref{eqn 2}) in Theorem \ref{M-system of type F4}. Equations (4)--(22) correspond to Equations (\ref{eqn 3a})--(\ref{eqn 19}) in Theorem \ref{M-system of type F4} respectively. Therefore we have the following theorem.

\begin{theorem}\label{connection with cluster algebra 1}
Each minimal affinizations in Theorem \ref{M-system of type F4} corresponds to a cluster variable in $\mathscr{A}$ defined in Section \ref{definition of cluster algebra A}.
\end{theorem}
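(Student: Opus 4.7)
The plan is to proceed by induction on the length of the mutation sequences listed in Table \ref{mutation sequence of type F_{4}}, identifying each cluster variable produced along the way with the class in $\rep(U_q\widehat{\mathfrak{g}})$ of the similarly labeled minimal affinization (and, for the auxiliary variables $\widetilde{p}^{(s)}_{\bullet}$, with the class $[\widetilde{\mathcal{P}}^{(s)}_{\bullet}]$ of the corresponding simple module). For the base case, I would observe that every element of the initial cluster ${\bf t}$ is labeled by the same indices as a Kirillov--Reshetikhin module, so the declarations $t^{\bullet}_{\bullet}\leftrightarrow[\mathcal{T}^{\bullet}_{\bullet}]$ and $\widetilde{t}^{\bullet}_{\bullet}\leftrightarrow[\widetilde{\mathcal{T}}^{\bullet}_{\bullet}]$ provide an unambiguous initialization.

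The inductive step is the heart of the argument. For each row of Table \ref{mutation sequence of type F_{4}}, I would carry out the prescribed column-by-column mutation sequence from $({\bf t},Q)$ and verify that at every vertex mutation the exchange relation (\ref{exchange relation}) reproduces, on the nose, the corresponding equation among (1)--(22) displayed in the same row. Since, as noted in the paragraph just before the theorem, (1) matches (\ref{eqn 1}), (2)--(3) match (\ref{eqn 2}), and (4)--(22) match (\ref{eqn 3a})--(\ref{eqn 19}) respectively, substituting the inductive identifications of all variables appearing on the right-hand side of the mutation turns the exchange relation into the matching equation of Theorem \ref{M-system of type F4}, and therefore equates the newly created cluster variable with the class of the correctly labeled minimal affinization. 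Because the list of minimal affinizations appearing in Theorem \ref{M-system of type F4} coincides precisely with the list of output labels of the mutation sequences in Table \ref{mutation sequence of type F_{4}}, the identification is exhaustive.

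The main obstacle is verifying that the quiver arrows incident to the vertex being mutated are always exactly those required by the right-hand side of the matching exchange equation. Each mutation sequence is long and acts on an infinite quiver, so one needs careful bookkeeping of how the arrows in the columns $C_1,\ldots,C_6$ evolve under the mutation rules (i)--(iii) of Section \ref{Background}. A workable strategy is to show that after each completed block in the sequence --- for example one full pass of $(C_{1},C_{1},\ldots,C_{1})$ --- the quiver stabilizes into a predictable pattern, reducing the per-step verification to the check at a single generic vertex of the block. The remaining checks are routine but lengthy diagrammatic manipulations. Throughout the induction I would rely on Theorem \ref{irreducible} to guarantee that the tensor products appearing as right-hand sides of the equations of Theorem \ref{M-system of type F4} are simple, so that the identification is genuinely one of classes of simple modules rather than merely of Laurent polynomials in the initial variables.
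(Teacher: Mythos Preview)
Your approach is correct and is essentially the same as the paper's, only spelled out more carefully: the paper's entire argument is the one-line observation preceding the theorem that equations (1)--(22) in Table~\ref{mutation sequence of type F_{4}} coincide with equations (\ref{eqn 1})--(\ref{eqn 19}) of Theorem~\ref{M-system of type F4}, from which the correspondence follows by the obvious induction you describe. Your write-up is in fact more honest than the paper's, since you flag the quiver-bookkeeping verification (that the arrows incident to each mutated vertex are precisely those producing the claimed exchange relation) as the genuine work; the paper simply asserts the content of Table~\ref{mutation sequence of type F_{4}} without further justification.

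One small remark: your appeal to Theorem~\ref{irreducible} is not actually needed for the statement as phrased. The identification of cluster variables with Grothendieck-ring classes proceeds purely at the level of the equations in Theorem~\ref{M-system of type F4}, which are already identities among classes $[\mathcal{T}]$; whether the summands on the right-hand side happen to be simple is irrelevant to whether the mutated variable equals the class of the new minimal affinization. Simplicity matters for interpreting the right-hand sides as honest tensor products of irreducibles, but not for the correspondence itself.
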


\section{The dual system of Theorem \ref{M-system of type F4}} \label{dual M system}
In this section, we study the dual system of Theorem \ref{M-system of type F4}.

\begin{theorem}[Theorem 3.9, \cite{Her07}] \label{her 07}
For $l, m, n\in \mathbb{Z}_{\geq 1}$, $s\in \mathbb{Z}$, the modules $\mathcal{T}_{n,m,0,0}^{(s)}$, $\mathcal{T}_{n,0,l,0}^{(s)}$, $\mathcal{T}_{0,m,l,0}^{(s)}$, $\mathcal{T}_{n,m,l,0}^{(s)}$ are anti-special.
\end{theorem}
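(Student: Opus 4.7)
The plan is to deduce this theorem from Theorem~\ref{Her 07} via a duality argument within the category of finite-dimensional $U_q\widehat{\mathfrak{g}}$-modules. For any finite-dimensional simple module $L(m)$, its left dual $L(m)^*$ is again finite-dimensional and simple, and its $q$-character is obtained from $\chi_q(L(m))$ by an involution $\sigma$ of $\mathbb{Z}\mathcal{P}$ of the form $Y_{i,a}^{\pm 1} \mapsto Y_{i^*, aq^{r_\mathfrak{g}}}^{\mp 1}$, where $i^* = -w_0(i)$ is the Dynkin involution and $r_\mathfrak{g}$ is an explicit spectral shift depending only on $\mathfrak{g}$. For $\mathfrak{g}$ of type $F_4$ we have $-w_0 = \mathrm{id}$, so $i^* = i$ and the only nontrivial effect of $\sigma$ on indices is the shift in the spectral parameter. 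The crucial property is that $\sigma$ exchanges the set of dominant monomials with the set of anti-dominant monomials, since it flips the sign of every exponent; consequently, a module $V$ is special if and only if $V^*$ is anti-special.

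Next I would verify that the duals of the four families in Theorem~\ref{Her 07} are exactly the four families in the present statement. Concretely, applying $\sigma$ to the unique lowest $l$-weight of $\widetilde{\mathcal{T}}_{n,m,l,0}^{(s)}$ (which exists since the module is special, and can in principle be extracted from the Frenkel--Mukhin algorithm) should yield a monomial of the form $T_{n,m,l,0}^{(s')}$ for an explicitly computable shift $s' = s'(s,n,m,l)$. The easiest case is the Kirillov--Reshetikhin family $\widetilde{\mathcal{T}}_{n,0,0,0}^{(s)}$, whose lowest $l$-weight is given by a classical closed formula; the remaining three families $\widetilde{\mathcal{T}}_{n,0,l,0}^{(s)}$, $\widetilde{\mathcal{T}}_{0,m,l,0}^{(s)}$, $\widetilde{\mathcal{T}}_{n,m,l,0}^{(s)}$ are handled by the same direct bookkeeping, using only the definition of $\widetilde{T}$ and $T$ recalled in Section~\ref{definition of minimal affinizations}.

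Once the identification $(\widetilde{\mathcal{T}}_{n,m,l,0}^{(s)})^* \cong \mathcal{T}_{n,m,l,0}^{(s')}$ is established (and likewise for the three boundary cases), the theorem is immediate: Theorem~\ref{Her 07} guarantees that the four $\widetilde{\mathcal{T}}$ modules are special, so by the duality principle above, the four $\mathcal{T}$ modules in the statement are anti-special. The main obstacle is the careful verification of the shift $s \mapsto s'$ and the confirmation that the duality identification is uniform across the four families; this is essentially a computation in the formal group $\mathcal{P}$ and does not require any new $q$-character calculation beyond what already underlies Theorem~\ref{Her 07}.
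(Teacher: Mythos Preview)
Your proposal is correct. The paper does not give its own proof of this statement---both the special version (Theorem~\ref{Her 07}) and the anti-special version (Theorem~\ref{her 07}) are cited directly from \cite{Her07}---but your duality argument is exactly the involution $\iota$ that the paper introduces in Lemma~\ref{lemma2} (with $r_{\mathfrak{g}}=18$ and $i^*=i$ for $F_4$), and which it uses to pass between the $\widetilde{\mathcal{T}}$ and $\mathcal{T}$ families when proving the adjacent Theorems~\ref{anti-special} and~\ref{The dual M-system}. So your approach is precisely the one the paper itself employs for the surrounding results, just applied one theorem earlier.
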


We have the following theorem.

\begin{theorem}\label{anti-special}
The modules
\begin{align*}
&\mathcal{\widetilde{T}}_{0,0,l,k}^{(s)}, \  \mathcal{T}_{n,0,l,0}^{(s)}, \  \mathcal{T}_{0,m,l,0}^{(s)},
\ \mathcal{T}_{n,m,0,0}^{(s)},
\ \mathcal{T}_{n,m,l,0}^{(s)},
\ \mathcal{T}_{n,0,0,k}^{(s)},
%\ \mathcal{T}_{0,m,0,k}^{(s)}(k \leq 2),
\mathcal{T}_{n,m,0,k}^{(s)}(k \leq 2),\\
&\mathcal{T}_{n,m,l,k}^{(s)}(k \leq 2),
\ \mathcal{T}_{n,0,l,k}^{(s)}(k \leq 2),
\ \mathcal{T}_{0,m,l,k}^{(s)}(k \leq 2),
\ \mathcal{P}^{(s)}_{0,0,l,k},
\ \mathcal{P}^{(s)}_{0,m,0,k},
\end{align*}
where $k, l, m, n\in \mathbb{Z}_{\geq 1}$, $s\in \mathbb{Z}$, are anti-special.
\end{theorem}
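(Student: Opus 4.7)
The plan is to derive Theorem~\ref{anti-special} from Theorem~\ref{special} and Theorem~\ref{her 07} by exhibiting a duality on $\mathbb{Z}\mathcal{P}$ that swaps dominant and anti-dominant monomials and matches each module listed in Theorem~\ref{anti-special} with a special module from Theorem~\ref{special} or Theorem~\ref{her 07}. Concretely, define the $\mathbb{Z}$-algebra involution $\sigma$ on $\mathbb{Z}\mathcal{P}$ by $\sigma(Y_{i,aq^r}) = Y_{i,aq^{-r}}^{-1}$ for $i\in I$, $a\in\mathbb{C}^{\times}$, $r\in\mathbb{Z}$. A direct computation from \eqref{eqution a} gives $\sigma(A_{i,aq^r}) = A_{i,aq^{-r}}^{-1}$, so $\sigma$ interchanges $\mathcal{Q}^+$ and $\mathcal{Q}^-$, reverses the partial order $\leq$ on $\mathcal{P}$, and sends dominant monomials bijectively to anti-dominant monomials.

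The key general fact to invoke is that $\sigma$ lifts to a ring involution on $\rep(U_q\widehat{\mathfrak{g}})$: for every finite-dimensional simple module $L(m_+)$ with lowest $\ell$-weight monomial $m_-$, the monomial $\sigma(m_-)$ is dominant and $\chi_q(L(\sigma(m_-))) = \sigma(\chi_q(L(m_+)))$. This reflects the right-dual operation for quantum affine algebras, combined with the triviality of the Dynkin diagram automorphism of $F_4$. Consequently, if $L(m_+)$ is special, then $L(\sigma(m_-))$ is anti-special, since the unique anti-dominant monomial in $\chi_q(L(\sigma(m_-)))$ is $\sigma(m_+)$.

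I would then check case by case that each module listed in Theorem~\ref{anti-special} arises as $L(\sigma(m_-))$ for some simple $L(m_+)$ belonging to Theorem~\ref{special} or Theorem~\ref{her 07}. The four families $\mathcal{T}^{(s)}_{n,m,0,0}$, $\mathcal{T}^{(s)}_{n,0,l,0}$, $\mathcal{T}^{(s)}_{0,m,l,0}$, $\mathcal{T}^{(s)}_{n,m,l,0}$ are immediate from Theorem~\ref{her 07}. The family $\widetilde{\mathcal{T}}^{(s)}_{0,0,l,k}$ pairs with $\mathcal{T}^{(-2)}_{0,0,l,k}$ from Theorem~\ref{special}. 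Finally, the remaining families $\mathcal{T}^{(s)}_{n,0,0,k}$, $\mathcal{T}^{(s)}_{n,m,0,k}$, $\mathcal{T}^{(s)}_{n,m,l,k}$, $\mathcal{T}^{(s)}_{n,0,l,k}$, $\mathcal{T}^{(s)}_{0,m,l,k}$ with $k\leq 2$, together with $\mathcal{P}^{(s)}_{0,0,l,k}$ and $\mathcal{P}^{(s)}_{0,m,0,k}$, pair with their tilde counterparts in Theorem~\ref{special} after an appropriate shift of the spectral parameter.

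The main obstacle is the explicit verification of the pairing: one must compute the lowest $\ell$-weight of each special tilde module in Theorem~\ref{special}, apply $\sigma$, and show that the result equals the prescribed highest weight of the corresponding module in Theorem~\ref{anti-special} up to a controlled shift of the spectral parameter $s$. The mixed short/long root structure of $F_4$ makes this bookkeeping delicate, and one uses the speciality from Theorem~\ref{special} (which makes the Frenkel--Mukhin algorithm available) to obtain closed-form expressions for those lowest weights. As an alternative route that avoids $\sigma$ entirely, one can imitate the proof of Theorem~\ref{special} with ``dominant'' and $\mathcal{Q}_U^{-}$ replaced by ``anti-dominant'' and $\mathcal{Q}_U^{+}$ throughout, invoking an anti-dominant analogue of Theorem~\ref{truncated}. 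Once the pairing (or the direct argument) is carried out in every case, Theorem~\ref{anti-special} follows.
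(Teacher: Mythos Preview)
Your proposal is correct and matches the paper's approach. The paper's proof is a single sentence---``dual arguments in the proof of Theorem~\ref{special}''---and the involution you call $\sigma$ is precisely (up to a harmless spectral shift by $18$) the map $\iota$ that the paper introduces in Lemma~\ref{lemma2} immediately afterwards, which it uses for exactly the purpose you describe: to interchange the tilde and non-tilde families and thereby swap ``special'' with ``anti-special''. Your alternative route of literally repeating the proof of Theorem~\ref{special} with dominant replaced by anti-dominant is also a legitimate reading of ``dual arguments'', so both of your suggested strategies are consistent with what the paper intends.
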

\begin{proof}
The proof of the theorem follows from dual arguments in the proof of Theorem \ref{special}.
\end{proof}

\begin{lemma}\label{lemma2}
Let $\iota: \mathbb{Z}\mathcal{P}\rightarrow \mathbb{Z}\mathcal{P}$ be a homomorphism of rings such that $Y_{1, aq^{s}}\mapsto Y_{1, aq^{18-s}}^{-1}$, $Y_{2, aq^{s}}\mapsto Y_{2, aq^{18-s}}^{-1}$, $Y_{3, aq^{s}}\mapsto Y_{3, aq^{18-s}}^{-1}$, $Y_{4, aq^{s}}\mapsto Y_{4, aq^{18-s}}^{-1}$ for all $a\in \mathbb{C}^{\times}, s\in \mathbb{Z}$. Then
$$\chi_{q}(\widetilde{\mathcal T}_{k, l, m, n}^{(s)})=\iota(\chi_{q}(\mathcal T_{k, l, m, n}^{(s)})), \quad \chi_{q}(\mathcal T_{k, l, m, n}^{(s)})=\iota(\chi_{q}(\widetilde{\mathcal T}_{k, l, m, n}^{(s)})).$$
\end{lemma}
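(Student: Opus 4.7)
The plan is to interpret $\iota$ as the combinatorial shadow of a natural duality on $U_q\widehat{\mathfrak{g}}$-modules and then reduce the lemma to an identification of the relevant antidominant monomial. As a preparatory step, a direct computation from~(\ref{eqution a}) gives
\[
\iota\bigl(A_{i,aq^{s}}\bigr)=A_{i,aq^{18-s}}^{-1}\qquad(i\in I,\ s\in\mathbb{Z}),
\]
so $\iota$ exchanges $\mathcal{Q}^{+}$ and $\mathcal{Q}^{-}$, reverses the partial order~(\ref{partial order of monomials}), and in particular swaps dominant with antidominant monomials. Moreover $\iota^{2}=\mathrm{id}$, which reduces the two asserted equalities to a single one.

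The key structural input is to recognise $\iota$ at the module level. Note that $18=r^{\vee}h^{\vee}$ for $\mathfrak{g}$ of type $F_{4}$ (dual Coxeter number $h^{\vee}=9$, lacing number $r^{\vee}=2$); this is precisely the shift that appears in the combinatorial description of the right-dual functor on $\rep(U_q\widehat{\mathfrak{g}})$. Composing that duality with the spectral-parameter inversion $aq^{s}\mapsto aq^{-s}$ centred at $aq^{9}$ realises $\iota$ as the effect on $q$-characters of a functor on $\rep(U_q\widehat{\mathfrak{g}})$. Consequently $\iota$ carries the $q$-character of every finite-dimensional simple module to that of another finite-dimensional simple module, and the highest weight monomial of the target is the $\iota$-image of the unique antidominant monomial of the source (once the source is anti-special).

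To finish I would pin down that image. By Theorem~\ref{anti-special}, $\mathcal{T}^{(s)}_{k,l,m,n}$ is anti-special, so $\chi_q(\mathcal{T}^{(s)}_{k,l,m,n})$ contains a unique antidominant monomial $T^{(s),-}_{k,l,m,n}$. A direct bookkeeping argument --- starting from the explicit formula for $T^{(s)}_{n,m,l,k}$ and iterating the lowering operators $A^{-1}_{i,b}$ along a reduced expression for the longest element of the Weyl group of $F_{4}$ --- shows that $\iota(T^{(s),-}_{k,l,m,n})=\widetilde T^{(s)}_{n,m,l,k}$, which is the highest weight monomial of $\widetilde{\mathcal{T}}^{(s)}_{k,l,m,n}$. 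Combined with the previous paragraph, and noting that $\widetilde{\mathcal{T}}^{(s)}_{k,l,m,n}$ is special by Theorem~\ref{special}, this forces $\iota(\chi_q(\mathcal{T}^{(s)}_{k,l,m,n}))=\chi_q(\widetilde{\mathcal{T}}^{(s)}_{k,l,m,n})$; a second application of $\iota$ yields the second identity.

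The main obstacle is the module-level identification of $\iota$ in the second step --- one must certify that the image of every $q$-character under $\iota$ really is a $q$-character, not just a Laurent polynomial with the correct leading monomial. A purely combinatorial alternative, bypassing the abstract duality, is to run the Frenkel--Mukhin algorithm in parallel on $\widetilde T^{(s)}_{n,m,l,k}$ and on the antidominant expansion of $\chi_q(\mathcal{T}^{(s)}_{k,l,m,n})$, and to check inductively, using $\iota(A^{-1}_{i,aq^{s}})=A_{i,aq^{18-s}}$, that each lowering step on one side matches an $\iota$-compatible raising step on the other; this is feasible but more computational.
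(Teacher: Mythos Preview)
Your overall strategy---recognise $\iota$ as the combinatorial shadow of duality on $\rep(U_q\widehat{\mathfrak{g}})$, conclude that $\iota$ sends $q$-characters of simple modules to $q$-characters of simple modules, then identify the image by its highest monomial---is exactly the standard argument and is what Lemma~7.3 of \cite{LM13} (which the paper cites in lieu of a proof) does.

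There is, however, a genuine gap in your execution of the identification step. You invoke Theorem~\ref{anti-special} to obtain a unique antidominant monomial in $\chi_q(\mathcal{T}^{(s)}_{k,l,m,n})$ and Theorem~\ref{special} for the other direction, but those theorems are stated only for restricted parameter ranges (several families require $k\le 2$), whereas Lemma~\ref{lemma2} is asserted for arbitrary $k,l,m,n$. Indeed, Section~\ref{conjectural equations for F_4} concerns minimal affinizations that are neither special nor anti-special, so for those the hypotheses you cite simply fail. There is also a risk of circularity: the proof of Theorem~\ref{anti-special} is by ``dual arguments'', which arguably already presupposes the mechanism of this lemma.

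The fix is to drop special/anti-special entirely. Once you have established that $\iota(\chi_q(L(m)))=\chi_q(L(m'))$ for some simple $L(m')$, then $L(m')$ automatically has a \emph{unique maximal} monomial $m'$, and since $\iota$ reverses the partial order~(\ref{partial order of monomials}), $m'=\iota(m_-)$ where $m_-$ is the \emph{unique minimal} monomial of $\chi_q(L(m))$. The existence and uniqueness of $m_-$ hold for every simple module (it is the lowest $l$-weight, determined by the highest $l$-weight via $i_s\mapsto i_{s+18}^{-1}$; see \cite{FM01}), with no (anti-)specialness needed. This is precisely the computation the paper carries out in the proof of Theorem~\ref{The dual M-system}. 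With this correction your argument goes through for all $k,l,m,n$.
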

\begin{proof}
The proof is similar to Lemma 7.3 in \cite{LM13}.
\end{proof}

\begin{theorem}\label{The dual M-system}
For $s\in \mathbb{Z}$, $k, l, m, n \in \mathbb{Z}_{\geq 0}$, we have the following system of equations.
\begin{align}\label{eqn 100}
[\mathcal{\widetilde{T}}_{0,0,l-1,k}^{(-2)}][\mathcal{\widetilde{T}}_{0,0,l,k}^{(-2)}]=[\mathcal{\widetilde{T}}_{0,0,l,k-1}^{(-2)}][\mathcal{\widetilde{T}}_{0,0,l-1,k+1}^{(-2)}]
+[\mathcal{\widetilde{T}}_{0,0,k+l,0}^{(-2)}][\mathcal{\widetilde{T}}_{0,0,l-1,0}^{(-2)}],
\end{align}
\begin{align}\label{eqn 102}
[\mathcal{T}_{n,m-1,0,0}^{(s+4)}][\mathcal{T}_{n,m,0,0}^{(s)}]=[\mathcal{T}_{n-1,m,0,0}^{(s+4)}][\mathcal{T}_{n+1,m-1,0,0}^{(s)}]+[\mathcal{T}_{0,m-1,0,0}^{(s+4n+4)}]
[\mathcal{T}_{0,n+m,0,0}^{(s)}],
\end{align}

\begin{align}\label{eqn 103a}
[\mathcal{T}_{n,0,0,0}^{(s+4)}][\mathcal{T}_{n,0,1,0}^{(s)}]&=[\mathcal{T}_{n-1,0,1,0}^{(s+4)}][\mathcal{T}_{n+1,0,0,0}^{(s)}]+[\mathcal{T}_{0,n,1,0}^{(s)}],
\end{align}

\begin{align}\label{eqn 103b}
[\mathcal{T}_{n,0,l-2,0}^{(s+4)}][\mathcal{T}_{n,0,l,0}^{(s)}]&=[\mathcal{T}_{n-1,0,l,0}^{(s+4)}][\mathcal{T}_{n+1,0,l-2,0}^{(s)}]+[\mathcal{T}_{0,0,l-2,0}^{(s+4n+4)}]
[\mathcal{T}_{0,n,l,0}^{(s)}],\  l \geq 2,
\end{align}

\begin{align}\label{eqn 104a}
[\mathcal{T}_{0,m,0,0}^{(s+4)}][\mathcal{T}_{0,m,1,0}^{(s)}]&=[\mathcal{T}_{0,m-1,1,0}^{(s+4)}][\mathcal{T}_{0,m+1,0,0}^{(s)}]+[\mathcal{T}_{m,0,0,0}^{(s+4)}]
[\mathcal{T}_{0,0,1+2m,0}^{(s)}],
\end{align}

\begin{align}\label{eqn 104b}
[\mathcal{T}_{0,m,l-2,0}^{(s+4)}][\mathcal{T}_{0,m,l,0}^{(s)}]&=[\mathcal{T}_{0,m-1,l,0}^{(s+4)}][\mathcal{T}_{0,m+1,l-2,0}^{(s)}]+[\mathcal{T}_{m,0,l-2,0}^{(s+4)}]
[\mathcal{T}_{0,0,l+2m,0}^{(s)}],\ l \geq 2
\end{align}

\begin{align}\label{eqn 105}
[\mathcal{T}_{n,m-1,l,0}^{(s+4)}][\mathcal{T}_{n,m,l,0}^{(s)}]=[\mathcal{T}_{n-1,m,l,0}^{(s+4)}][\mathcal{T}_{n+1,m-1,l,0}^{(s)}]+[\mathcal{T}_{0,m-1,l,0}^{(s+4n+4)}]
[\mathcal{T}_{0,n+m,l,0}^{(s)}],
\end{align}

\begin{align}\label{eqn 106a}
[\mathcal{T}_{n,0,0,0}^{(s+4)}][\mathcal{T}_{n,0,0,k}^{(s)}]=[\mathcal{T}_{n-1,0,0,k}^{(s+4)}][\mathcal{T}_{n+1,0,0,0}^{(s)}]+[\mathcal{T}_{0,n,0,k}^{(s)}],
\ k=1, 2,
\end{align}

\begin{align}\label{eqn 107a}
[\mathcal{T}_{0,m,0,0}^{(s+4)}][\mathcal{T}_{0,m,0,k}^{(s)}]&=[\mathcal{T}_{0,m-1,0,k}^{(s+4)}][\mathcal{T}_{0,m+1,0,0}^{(s)}]+[\mathcal{T}_{0,0,2m,k}^{(s)}]
[\mathcal{T}_{m,0,0,0}^{(s+4)}], \ k=1, 2,
\end{align}

\begin{align}\label{eqn 108}
[\mathcal{T}_{n,0,0,k-1}^{(s+4)}][\mathcal{T}_{n,0,1,k}^{(s)}]=[\mathcal{T}_{n-1,0,1,k}^{(s+4)}][\mathcal{T}_{n+1,0,0,k-1}^{(s)}]+[\mathcal{T}_{0,n,1,k}^{(s)}],
\ k=1,2,
\end{align}

\begin{align}\label{eqn 109}
[\mathcal{T}_{0,m,0,k-1}^{(s+4)}][\mathcal{T}_{0,m,1,k}^{(s)}]=[\mathcal{T}_{0,m-1,1,k}^{(s+4)}][\mathcal{T}_{0,m+1,0,k-1}^{(s)}]+[\mathcal{T}_{0,0,1+2m,k}^{(s)}][\mathcal{T}_{m,0,0,k-1}^{(s+4)}],
\ k=1,2,
\end{align}

\begin{align}\label{eqn 110}
&[\mathcal{T}_{n,0,l-2,k}^{(s+4)}][\mathcal{T}_{n,0,l,k}^{(s)}]=[\mathcal{T}_{n-1,0,l,k}^{(s+4)}][\mathcal{T}_{n+1,0,l-2,k}^{(s)}]+[\mathcal{T}_{0,n,l,k}^{(s)}]
[\mathcal{T}_{0,0,l-2,k}^{(s+4n+4)}],\ k=1,2,\ l \geq 2,
\end{align}
\begin{align}\label{eqn 111}
&[\mathcal{T}_{0,m,l-2,k}^{(s+4)}][\mathcal{T}_{0,m,l,k}^{(s)}]=[\mathcal{T}_{0,m-1,l,k}^{(s+4)}][\mathcal{T}_{0,m+1,l-2,k}^{(s)}]+[\mathcal{T}_{0,0,l+2m,k}^{(s)}]
[\mathcal{T}_{m,0,l-2,k}^{(s+4)}], \ k=1,2,\ l \geq 2,
\end{align}

\begin{align}\label{eqn 112}
[\mathcal{T}_{n,m-1,0,k}^{(s+4)}][\mathcal{T}_{n,m,0,k}^{(s)}]=[\mathcal{T}_{n-1,m,0,k}^{(s+4)}][\mathcal{T}_{n+1,m-1,0,k}^{(s)}]+[\mathcal{T}_{0,n+m,0,k}^{(s)}]
[\mathcal{T}_{0,m-1,0,k}^{(s+4)}], \ k=1,2,
\end{align}

\begin{align}\label{eqn 113}
[\mathcal{T}_{n,m-1,l,k}^{(s+4)}][\mathcal{T}_{n,m,l,k}^{(s)}]=[\mathcal{T}_{n-1,m,l,k}^{(s+4)}][\mathcal{T}_{n+1,m-1,l,k}^{(s)}]+[\mathcal{T}_{0,n+m,l,k}^{(s)}]
[\mathcal{T}_{0,m-1,l,k}^{(s+4n+4)}],\ k=1,2,
\end{align}

\begin{align}
\begin{split}
[\mathcal{P}^{(s+2)}_{0,0,1,k-1}][\mathcal{P}^{(s)}_{0,0,1,k}]&=[\mathcal{T}^{(s+10)}_{0,0,0,k-4}][\mathcal{T}^{(s+6)}_{0,0,0,k-2}][\mathcal{T}^{(s+2)}_{0,0,0,k}][\mathcal{P}^{(s)}_{0,0,2,k-1}]\\
& \quad + [\mathcal{T}^{(s+12)}_{0,0,0,k-5}][\mathcal{T}^{(s+8)}_{0,0,0,k-3}][\mathcal{T}^{(s+4)}_{0,0,0,k-1}]
[\mathcal{T}^{(s)}_{0,0,0,k+1}][\mathcal{P}^{(s+2)}_{0,1,0,k-2}],\  k \geq 3,
\end{split}
\end{align}

\begin{gather}
\begin{align}
[\mathcal{P}^{(s+2)}_{0,0,l,k-1}][\mathcal{P}^{(s)}_{0,0,l,k}]=[\mathcal{P}^{(s+2)}_{0,0,l-1,k}][\mathcal{P}^{(s)}_{0,0,l+1,k-1}]+[\mathcal{T}^{(s+2l+10)}_{0,0,0,k-5}][\mathcal{T}^{(s)}_{0,0,0,k+l}] [\mathcal{P}^{(s+2)}_{0,\frac{l+1}{2},0,k-2}][\mathcal{P}^{(s+4)}_{0,\frac{l-1}{2},0,k-1}],
\end{align}
\end{gather}
where $k \geq 3, \text{ l is odd}, \ l\geq 3$,

\begin{gather}
\begin{align}
[\mathcal{P}^{(s+2)}_{0,0,l,k-1}][\mathcal{P}^{(s)}_{0,0,l,k}]=[\mathcal{P}^{(s+2)}_{0,0,l-1,k}][\mathcal{P}^{(s)}_{0,0,l+1,k-1}]+[\mathcal{T}^{(s+2l+10)}_{0,0,0,k-5}][\mathcal{T}^{(s)}_{0,0,0,k+l}] [\mathcal{P}^{(s+4)}_{0,\frac{l}{2},0,k-2}][\mathcal{P}^{(s+2)}_{0,\frac{l}{2},0,k-1}],
\end{align}
\end{gather}
where $k\geq3, \text{ l is even}, \ l \geq 2$,

\begin{gather}
\begin{align}
[\mathcal{P}^{(s+4)}_{0,1,0,k-2}][\mathcal{P}^{(s)}_{0,1,0,k}]= [\mathcal{T}^{(s+8)}_{0,0,0,k-2}][\mathcal{T}^{(s+4)}_{0,0,0,k}][\mathcal{P}^{(s)}_{0,2,0,k-2}]+[\mathcal{P}^{(s)}_{0,0,2,k}][\mathcal{T}^{(s+4)}_{1,0,0,k-2}],
\ k \geq 3,
\end{align}
\end{gather}

\begin{gather}
\begin{align}
[\mathcal{P}^{(s+4)}_{0,m,0,k-2}][\mathcal{P}^{(s)}_{0,m,0,k}]= [\mathcal{P}^{(s+4)}_{0,m-1,0,k}][\mathcal{P}^{(s)}_{0,m+1,0,k-2}]+[\mathcal{P}^{(s)}_{0,0,2m,k}][\mathcal{T}^{(s+4)}_{m,0,0,k-2}], \ k\geq 3, \ m\geq 2,
\end{align}
\end{gather}

\begin{gather}
\begin{align}
[\mathcal{T}^{(s+4)}_{n,0,0,k-2}][\mathcal{T}^{(s)}_{n,0,0,k}]&=[\mathcal{T}^{(s+4)}_{n-1,0,0,k}][\mathcal{T}^{(s)}_{n+1,0,0,k-2}]+[\mathcal{P}^{(s)}_{0,n,0,k}], \ k \geq 3.
\end{align}
\end{gather}

Moreover, every module in the summands on the right hand side of above equation is simple.
\end{theorem}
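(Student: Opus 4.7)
The plan is to deduce Theorem \ref{The dual M-system} from Theorem \ref{M-system of type F4} (together with Theorem \ref{irreducible}) by applying the involution $\iota$ of Lemma \ref{lemma2}. Since the $q$-character homomorphism $\chi_q : \mathrm{Rep}(U_q\widehat{\mathfrak{g}}) \to \mathbb{Z}\mathcal{P}$ is an injective ring homomorphism, every identity in the Grothendieck ring listed in Theorem \ref{M-system of type F4} is equivalent to the corresponding polynomial identity in $\mathbb{Z}\mathcal{P}$. The ring automorphism $\iota$ can then be applied term by term to both sides of each such polynomial identity without altering its truth.

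By Lemma \ref{lemma2}, $\iota$ sends $\chi_q(\mathcal{T}^{(s)}_{n,m,l,k})$ to $\chi_q(\widetilde{\mathcal{T}}^{(s')}_{n,m,l,k})$ and $\chi_q(\widetilde{\mathcal{T}}^{(s)}_{n,m,l,k})$ to $\chi_q(\mathcal{T}^{(s')}_{n,m,l,k})$, where the transformed spectral parameter $s'$ is obtained from the rule $aq^s \mapsto aq^{18-s}$ together with an overall rescaling of the base point $a$. Applying $\iota$ to each equation (\ref{eqn 1})--(\ref{eqn 19}) therefore interchanges every $\mathcal{T}$ with $\widetilde{\mathcal{T}}$ and every $\widetilde{\mathcal{P}}$ with $\mathcal{P}$, the latter because the monomials $\widetilde{P}^{(s)}_{0,0,l,k}$ and $\widetilde{P}^{(s)}_{0,m,0,k}$ are by definition products of $\widetilde{T}^{(*)}$-factors and thus transform factorwise. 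After absorbing the global reflection of the spectral parameter into a relabeling of $a$, each transformed equation has precisely the form displayed in (\ref{eqn 100})--(22), yielding the dual system.

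For the simplicity assertion, observe that $\iota$ sends $q$-characters of finite-dimensional simple $U_q\widehat{\mathfrak{g}}$-modules to $q$-characters of their (left) duals, which are again simple: the image of a dominant monomial under $\iota$ is anti-dominant, and via the standard spectral-parameter reflection it becomes the highest monomial of the dual simple module. Consequently, the simplicity of each summand on the right hand side of every equation of Theorem \ref{M-system of type F4}, which is the content of Theorem \ref{irreducible}, transfers directly under $\iota$ to the simplicity of each summand on the right hand side of the corresponding equation in Theorem \ref{The dual M-system}.

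The principal obstacle is combinatorial bookkeeping: one must tabulate the action of $\iota$ on each building block ($\widetilde{T}^{(*)}$, $\widetilde{P}^{(*)}$, $T^{(*)}$, $P^{(*)}$) and verify that the spectral-parameter reflection, combined with the rescaling of $a$, reproduces exactly the superscripts appearing in (\ref{eqn 100})--(22). In particular, care is needed for equations involving $\widetilde{\mathcal{P}}$-modules (such as (\ref{eqn 14})--(\ref{eqn 19})), where multiple shifts of $s$ occur simultaneously. Once the term-by-term correspondence is checked, both the equations and the simplicity of the summands in Theorem \ref{The dual M-system} follow at once from Theorem \ref{M-system of type F4} and Theorem \ref{irreducible}.
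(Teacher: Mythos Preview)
Your proposal is correct and follows essentially the same approach as the paper: apply the involution $\iota$ of Lemma~\ref{lemma2} to each equation of Theorem~\ref{M-system of type F4}, observe that this interchanges $\widetilde{\mathcal{T}}$ with $\mathcal{T}$ (and $\widetilde{\mathcal{P}}$ with $\mathcal{P}$) up to a reflection of the spectral parameter, and deduce simplicity of the right-hand summands from Theorem~\ref{irreducible} via $\iota$. The paper's proof makes the bookkeeping slightly more concrete by computing that the net effect of $\iota$ on highest weight monomials is $i_s \mapsto i_{-s}$, which is exactly the relabeling you allude to when you speak of ``absorbing the global reflection of the spectral parameter into a relabeling of $a$''.
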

\begin{proof}
%We give a proof of the case of $[\mathcal{T}_{n,m-1,l,k}^{(s+4)}][\mathcal{T}_{n,m,l,k}^{(s)}]=[\mathcal{T}_{n-1,m,l,k}^{(s+4)}][\mathcal{T}_{n+1,m-1,l,k}^{(s)}]+[\mathcal{T}_{0,n+m,l,k}^{(s)}]$ $[\mathcal{T}_{0,m-1,l,k}^{(s+4n+4)}]$, where $k=1, 2$. The other cases are similar.

The lowest weight monomial of $ \chi_q(\mathcal{\widetilde{T}}_{n, m, l, k}^{(s)}) $ is obtained from the highest weight monomial of $ \chi_q(\mathcal{\widetilde{T}}_{n, m, l, k}^{(s)}) $ by the substitutions: $1_s \mapsto 1^{-1}_{18+s}$, $2_s \mapsto 2^{-1}_{18+s}$, $3_s \mapsto 3^{-1}_{18+s}$, $4_s \mapsto 4^{-1}_{18+s}$. After we apply $\iota$ to $\chi_{q}(\mathcal{\widetilde{T}}_{n, m, l, k}^{(s)})$, the lowest weight monomial of $ \chi_q(\mathcal{\widetilde{T}}_{n, m, l, k}^{(s)}) $ becomes the highest weight monomial of $\iota(\chi_{q}(\mathcal{\widetilde{T}}_{n, m, l, k}^{(s)}))$. Therefore the highest weight monomial of $\iota(\chi_{q}(\mathcal T_{n, m, l, k}^{(s)}))$ is obtained from the lowest weight monomial of $ \chi_q(\mathcal{\widetilde{T}}_{n, m, l, k}^{(s)}) $ by the substitutions: $1_s \mapsto 1^{-1}_{18-s}$, $2_s \mapsto 2^{-1}_{18-s}$, $3_s \mapsto 3^{-1}_{18-s}$, $4_s \mapsto 4^{-1}_{18-s}$. It follows that the highest weight monomial of $\iota(\chi_{q}(\mathcal{\widetilde{T}}_{n, m, l, k}^{(s)}))$ is obtained from the highest weight monomial of $ \chi_q(\mathcal{\widetilde{T}}_{n, m, l, k}^{(s)}) $ by the substitutions: $1_s \mapsto 1_{-s}$, $2_s \mapsto 2_{-s}$, $3_s \mapsto 3_{-s}$, $4_s \mapsto 4_{-s}$. Therefore the dual system is obtained applying $\iota$ to both sides of every equation of the system in Theorem \ref{M-system of type F4}.

The simplify of every module in the summands on the right hand side of every equation follows from Theorem $\ref{irreducible}$ and Lemma $\ref{lemma2}$.
\end{proof}
\begin{example}
The following are some equations in the system in Theorem \ref{The dual M-system}.
\begin{gather}
\begin{align*}
&[1_{2}][1_{4}2_{1}]=[1_{4}1_{2}][2_{1}]+ [2_{3}2_{1}], \\
&[1_{4}1_{2}][1_{6}1_{4}2_{1}]=[1_{4}2_{1}][1_{6}1_{4}1_{2}]+[2_{5}2_{3}2_{1}], \\
&[1_{6}1_{4}1_{2}][1_{8}1_{6}1_{4} 2_{1}]=[1_{6}1_{4}2_{1}][1_{8}1_{6}1_{4}1_{2}]+[2_{7}2_{5}2_{3}2_{1}],\\
&[3_{4}][1_{0}2_{3}3_{8}]=[1_{0}2_{3}][3_{4}3_{8}]+[1_{0}2_{3}2_{5}2_{7}][4_{6}],\\
&[3_{4}3_{8}][1_{0}2_{3}3_{8}3_{12}=[1_{0}2_{3}3_{8}][3_{4}3_{8}3_{12}]+[1_{0}2_{3}2_{5}2_{7}2_{9}2_{11}][4_{6}4_{10}],\\
&[3_{4}3_{8}3_{12}][1_{0}2_{3}3_{8}3_{12}3_{16}]=[1_{0}2_{3}3_{8}3_{12}][3_{4}3_{8}3_{12}3_{16}]+[1_{0}2_{3}2_{5}2_{7}2_{9}2_{11}2_{13}2_{15}][4_{6}  4_{10}4_{14}],\\
&[4_{6}][1_{0}2_{3}4_{10}]=[1_{0}2_{3}][4_{6}4_{10}]+[1_{0}2_{3}3_{8}],\\
&[4_{6}4_{10}][1_{0}2_{3}4_{10}4_{14}]=[1_{0}2_{3}4_{10}][4_{6}4_{10}4_{14}]+[1_{0}2_{3}3_{8}3_{12}],\\
&[4_{6}4_{10}4_{14}][1_{0}2_{3}4_{10}4_{14}4_{18}]=[1_{0}2_{3}4_{10}4_{14}][4_{6}4_{10}4_{14}4_{18}]+[1_{0}2_{3}3_{8}3_{12}3_{16}].
\end{align*}
\end{gather}
\end{example}
\subsection{The system in Theorem \ref{The dual M-system}}
By replacing each $[\mathcal{\widetilde{T}}_{n, m, l, k}^{(s)}]$ (resp. $[\mathcal{T}_{n, m, l, k}^{(s)}]$) in the system of Theorem \ref{The dual M-system} with  $\chi(\widetilde{\mathfrak{m}}_{n, m, l, k})$ (resp. $\chi(\mathfrak{m}_{n, m, l, k})$), we obtain a system of equations consisting of the characters of $U_{q}\mathfrak{g}$-modules. The following are two equations in the system.
\begin{equation*}
\begin{split}
&\chi(\widetilde{\mathfrak{m}}_{0,0,l-1,k})\chi(\widetilde{\mathfrak{m}}_{0,0,l,k})=\chi(\widetilde{\mathfrak{m}}_{0,0,l,k-1})\chi(\widetilde{\mathfrak{m}}_{0,0,l-1,k+1})+\chi(\widetilde{\mathfrak{m}}_{0,0,k+l,0})\chi(\widetilde{\mathfrak{m}}_{0,0,l-1,0}),\\
&\chi(\mathfrak{m}_{n,m-1,0,0})\chi(\mathfrak{m}_{n,m,0,0})=\chi(\mathfrak{m}_{n-1,m,0,0})\chi_(\mathfrak{m}_{n+1,m-1,0,0})+\chi(\mathfrak{m}_{0,m-1,0,0})\chi(\mathfrak{m}_{0,n+m,0,0}).
%&\chi_{q}(r_{n,0,l-2,0}^{(s+4)})\chi_{q}(r_{n,0,l,0}^{(s)})=\chi_{q}(r_{n-1,0,l,0}^{(s+4)})\chi_{q}(r_{n+1,0,l-2,0}^{(s)})+\chi_{q}(r_{0,0,l-2,0}^{(s+4n+4)})\chi_{q}(r_{0,n,l,0}^{(s)}),\\
%&\chi_{q}(r_{0,m,l-2,0}^{(s+4)})\chi_{q}(r_{0,m,l,0}^{(s)})=\chi_{q}(r_{0,m-1,l,0}^{(s+4)})\chi_{q}(r_{0,m+1,l-2,0}^{(s)})+\chi_{q}(r_{m,0,0,l-2}^{(s+4)})\chi_{q}(r_{0,0,l+2m,0}^{(s)}),\\
%&\chi_{q}(r_{n,m-1,l,0}^{(s+4)})\chi_{q}(r_{n,m,l,0}^{(s)})=\chi_{q}(r_{n-1,m,l,0}^{(s+4)})\chi_{q}(r_{n+1,m-1,l,0}^{(s)})+\chi_{q}(r_{0,m-1,l,0}^{(s+4n+4)})\chi_{q}(r_{0,n+m,l,0}^{(s)});\\
\end{split}
\end{equation*}

\subsection{Relation between the system in Theorem \ref{The dual M-system} and cluster algebras}
Let $I = \{ 1,2,$ $3,4 \}$ and

\begin{align*}
&S=\{2u \mid u\in \mathbb{Z}_{\geq 0}\},\\
&S'=\{2u+1 \mid u\in \mathbb{Z}_{\geq 0}\}.
\end{align*}
Let
\begin{align*}
V & =(\{1\}\times S) \cup (\{2\}\times S')\cup (\{3\}\times S)\cup (\{4\}\times S).
\end{align*}
A quiver $\widetilde{Q}$ with vertex set $V$ will be defined as follows. The arrows of $\widetilde{Q}$ are given by the following rule: there is an arrow from the vertex $(i,r)$ to the vertex $(j,s)$ if and only if $b_{ij}\neq 0$ and $s=r-b_{ij}+d_{i}-d_{j}$.

Let ${\bf \widetilde{t}} = {\bf \widetilde{t}}_1 \cup {\bf \widetilde{t}}_2$, where
\begin{align*}
{\bf \widetilde{t}_1}=\{\widetilde{t}_{0, 0, l, 0}^{(-2)},\ \widetilde{t}_{0, 0, 0, k}^{(-4)} \mid k, l \in \mathbb{Z}_{\geq 1}\}
\end{align*}
and
\begin{gather}
\begin{align*}
{\bf \widetilde{t}}_2=\{t_{n, 0, 0, 0}^{(-4n+4)},\ t_{0, m, 0, 0}^{(-4m)},\ t_{n, 0, 0, 0}^{(-4n+2)}, \ t_{0, m, 0, 0}^{(-4m+2)}, \ t_{0, 0, l, 0}^{(-2l-2)},\ t_{0, 0, 0, k}^{(2k-2)} \mid k, l, m, n \in \mathbb{Z}_{\geq 1}\}.
%\bigcup \{t_{n, 0, 0, 0}^{(-4n+2)},\ t_{0, m, 0, 0}^{(-4m+2)} \ t_{n, 0, 0, 0}^{(-4n)},\ t_{0, m, 0, 0}^{(-4m-2)},\ t_{n, 0, 0, 0}^{(-4n-2)} \mid m, n \in \mathbb{Z}_{\geq 1}\}.
\end{align*}
\end{gather}

Let $\widetilde{\mathscr{A}}$ be the cluster algebra defined by the initial seed $({\bf \widetilde{t}}, \widetilde{Q})$.
By similar arguments in Section \ref{Relation between the M-systems and cluster algebras}, we have the following theorem.
\begin{theorem}\label{minimal affinizations correspond to cluster variablesII}
Every equation in the system in Theorem \ref{The dual M-system} corresponds to a mutation equation in the cluster algebra $\widetilde{\mathscr{A}}$. Every minimal affinization in the system in Theorem \ref{The dual M-system} corresponds to a cluster variable of the cluster algebra $\widetilde{\mathscr{A}}$.
\end{theorem}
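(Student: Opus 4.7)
The plan is to mirror the construction in Section \ref{Relation between the M-systems and cluster algebras} using the ring involution $\iota$ of Lemma \ref{lemma2} as the guiding duality. The quiver $\widetilde{Q}$, whose vertex set is indexed by \emph{positive} spectral parameters, is the reflection of $Q$ across $s=0$, and the initial seed $({\bf \widetilde{t}}, \widetilde{Q})$ swaps the roles of the $T$-type and $\widetilde{T}$-type variables relative to $({\bf t}, Q)$. Because the equations of Theorem \ref{The dual M-system} were produced precisely by applying $\iota$ to those of Theorem \ref{M-system of type F4}, the cluster-algebraic identification for $\widetilde{\mathscr{A}}$ should be obtainable by transporting the identification already established for $\mathscr{A}$ along the same reflection.

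First I would define, by a recursive procedure strictly analogous to Table \ref{mutation sequence of type F_{4}}, a family of cluster variables $t^{(s)}_{n,m,l,k}$, $\widetilde{t}^{(s)}_{n,m,l,k}$, and $p^{(s)}_{n,m,l,k}$ in $\widetilde{\mathscr{A}}$ indexed by nonnegative $s$. Each such variable is produced by a mutation sequence along the columns $C_{1},\dots,C_{6}$ of $\widetilde{Q}$, obtained by replaying the corresponding entry of Table \ref{mutation sequence of type F_{4}} while flipping the sign of every spectral parameter and interchanging the roles of the $T$ and $\widetilde{T}$ symbols. In particular, the initial cluster mutations among the columns $C_1,\ldots,C_6$ in $\widetilde{Q}$ are literally the reflections of the mutation sequences used to produce the variables in (\ref{cluster variables}).

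Next, I would verify for each mutation step that the local arrow configuration at the mutated vertex in $\widetilde{Q}$ coincides with the configuration at the corresponding vertex of $Q$ after reflection, and hence that the exchange relation it produces is precisely the $\iota$-image of the relation used in the proof of Theorem \ref{connection with cluster algebra 1}. Identifying each $t^{(s)}_{n,m,l,k}$ (respectively $\widetilde{t}^{(s)}_{n,m,l,k}$, $p^{(s)}_{n,m,l,k}$) with the class $[\mathcal{T}^{(s)}_{n,m,l,k}]$ (respectively $[\widetilde{\mathcal{T}}^{(s)}_{n,m,l,k}]$, $[\mathcal{P}^{(s)}_{n,m,l,k}]$) then converts each of the resulting exchange relations in $\widetilde{\mathscr{A}}$ into one of the equations (\ref{eqn 100})--(\ref{eqn 113}) of Theorem \ref{The dual M-system} and its ``$\mathcal{P}$''-variable analogues.

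The main obstacle, as in Section \ref{Relation between the M-systems and cluster algebras}, is the bookkeeping of quiver orientations and spectral-parameter shifts under each column mutation: one must confirm that after each step the arrows incident to the next vertex to be mutated have flipped in exactly the same way as in the proof for $\mathscr{A}$, so that the inductive pattern transports cleanly under $\iota$. Once this is checked for an initial segment covering each of the mutation templates in Table \ref{mutation sequence of type F_{4}}, the general case follows by the same induction that establishes Theorem \ref{connection with cluster algebra 1}, yielding both the matching of exchange relations and the realization of each dual minimal affinization as a cluster variable of $\widetilde{\mathscr{A}}$.
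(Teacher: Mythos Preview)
Your proposal is correct and essentially matches the paper's own argument, which consists of the single line ``By similar arguments in Section \ref{Relation between the M-systems and cluster algebras}.'' You have simply spelled out what ``similar arguments'' means here: replay the mutation sequences of Table \ref{mutation sequence of type F_{4}} on the reflected quiver $\widetilde{Q}$ with the $T$/$\widetilde{T}$ roles swapped, and check that the resulting exchange relations are the $\iota$-images of those in Section \ref{Relation between the M-systems and cluster algebras}, hence the equations of Theorem \ref{The dual M-system}. One small caveat worth noting in your write-up: the reflection $s\mapsto -s$ sends $Q$ to the \emph{opposite} of $\widetilde{Q}$ (the arrow rule $s=r-b_{ij}+d_i-d_j$ is not invariant under sign change), but since passing to the opposite quiver merely swaps the two products in each exchange relation, this does not affect the argument.
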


\section{Proof of theorem \ref{special}}\label{proof of special}
In this section, we prove Theorem \ref{special}. Namely, we will prove that for $s \in \mathbb{Z}, k, l, m, n \in \mathbb{Z}_{\geq 1},$ the modules

\begin{equation}\label{special modules 1}
\begin{split}
\mathcal{T}_{0,0,l,k}^{(-2)}, \
\mathcal{\widetilde{T}}_{n,0,l,0}^{(s)}, \
\mathcal{\widetilde{T}}_{0,m,l,0}^{(s)}, \
\mathcal{\widetilde{T}}_{n,m,0,0}^{(s)}, \
\mathcal{\widetilde{T}}_{n,m,l,0}^{(s)}, \
\mathcal{\widetilde{T}}_{n,0,0,k}^{(s)}, \
\mathcal{\widetilde{T}}_{n,m,0,k}^{(s)} \ (k \leq 2), \\
\mathcal{\widetilde{T}}_{n,m,l,k}^{(s)} \ (k \leq 2), \
\mathcal{\widetilde{T}}_{n,0,l,k}^{(s)} \ (k \leq 2), \
\mathcal{\widetilde{T}}_{0,m,l,k}^{(s)} \ (k \leq 2), \
\widetilde{P}^{(s)}_{0,0,l,k}, \
\widetilde{P}^{(s)}_{0,m,0,k},
\end{split}
\end{equation}
are special. Since the modules
\begin{align*}
&\mathcal{T}_{0,0,0,k}^{(s)}, \ \mathcal{T}_{0,0,l,0}^{(s)}, \ \mathcal{T}_{0,m,0,0}^{(s)}, \ \mathcal{T}_{m,0,0,0}^{(s)}, \ \mathcal{\widetilde{T}}_{0,0,0,k}^{(s)}, \ \mathcal{\widetilde{T}}_{0,0,l,0}^{(s)}, \ \mathcal{\widetilde{T}}_{0,m,0,0}^{(s)}, \
\mathcal{\widetilde{T}}_{n,0,0,0}^{(s)},
\end{align*}
are Kirillov-Reshetikhin modules, they are special. By Theorem \ref{Her 07}, the modules $\mathcal{\widetilde{T}}_{n,m,0,0}^{(s)}$, $\mathcal{\widetilde{T}}_{n,0,l,0}^{(s)}$, $\mathcal{\widetilde{T}}_{0,m,l,0}^{(s)}$, $\mathcal{\widetilde{T}}_{n,m,l,0}^{(s)}$ are special.
%We can deal with  $\mathcal{T}_{0,0,l,k}^{(s)}$ for $\mathfrak{g}_{[12]}$ of type $C_3$, and $\mathcal{T}_{0,0,l,k}^{(s)}$ is also special.
In the following, we will prove that the other modules in (\ref{special modules 1}) are special. Without loss of generality, we may assume that $s=0$ in $\mathcal{\widetilde{T}}^{(s)}$, where $\mathcal{\widetilde{T}}$ is a module in (\ref{special modules 1}).

\subsection{The cases of $\mathcal{\widetilde{T}}_{n, 0, 0, k}^{(0)}$, $\mathcal{T}_{0,0,l,k}^{(-2)}$, $\widetilde{P}^{(s)}_{0,0,l,k}$, and $\widetilde{P}^{(s)}_{0,m,0,k}$ }\label{cases 6.1}

Since the proof of each case is similar to each other, we give a detailed proof of the case of $\mathcal{\widetilde{T}}_{n, 0, 0, k}^{(0)}$ as follows.

Let $m_+=\widetilde{T}_{n, 0, 0, k}^{(0)}$ with $n,k \in\mathbb{Z}_{\geq 1}$. Then
\begin{align*}
m_+=(4_{0} 4_{4}\cdots 4_{4n-4}) ( 1_{4n+4} 1_{4n+6} \cdots 1_{4n+2k+2} ).
\end{align*}

Suppose that $n=1$. Let $$U=I \times \{aq^s : s \in \mathbb{Z}, s \leq 2k+6 \}.$$

Since all monomials in $\mathscr{M}(\chi_q(m_+)-\text{trunc}_{m_+ \mathcal{Q}_{U}^{-}} \: \chi_q(m_+))$ are right-negative, it is sufficient to show that $\text{trunc}_{m_+ \mathcal{Q}_{U}^{-}} \: \chi_q(m_+)$ is special.

Let
\begin{gather}
\begin{align*}
\mathscr{M}= \{ m_0=m_+, m_1=m_0 A_{4,2}^{-1},  m_{2} = m_{1} A_{3, 4}^{-1},  m_{3} = m_{2} A_{2, 6}^{-1},  m_{4}=m_{3}A_{2, 4}^{-1},  m_{5} = m_{4}A_{3, 6}^{-1},  m_{6}=m_{5}A_{4, 8}^{-1} \}.
\end{align*}
\end{gather}

It is easy to see that $\mathscr{M}$ satisfies the conditions in Theorem \ref{truncated}. Therefore
\begin{align*}
\text{trunc}_{m_+ \mathcal{Q}_{U}^{-}} \: \chi_q(m_+)=\sum_{m\in \mathscr{M}} m
\end{align*}
and hence $\text{trunc}_{m_+ \mathcal{Q}_{U}^{-}} \: \chi_q(m_+)$ is special.

Suppose that $n\geq 2$. In the following, we write $m_+ = m_1'm_2' = m_1''m_2''$ for some monomials $m_1',m_2',m_1'',m_2''$. We will show that the only dominant monomial in $\mathscr{M}(\chi_q(m_1')\chi_q(m_2')) \cap \mathscr{M}(\chi_q(m_1'')\chi_q(m_2''))$ is $m_+$ which implies that $L(m_+)$ is special.

Let $m_+ = m_1'm_2'$, where
\begin{align*}
m'_1=4_{0} 4_{4}\cdots 4_{4n-8}, \ m'_2=4_{4n-4} 1_{4n+4} 1_{4n+6} \cdots 1_{4n+2k+2}.
\end{align*}

We have shown that $L(m'_2)$ is special. Therefore the Frenkel-Mukhin algorithm works for $L(m'_2)$. We will use the Frenkel-Mukhin algorithm to compute $\chi_q(L(m'_1)),  \chi_q(L(m'_2))$ and classify all dominant monomials in $\chi_q(L(m'_1))  \chi_q(L(m'_2))$. Let $m=m_1m_2$ be a dominant monomial, where $m_i \in \mathscr{M}(L(m'_i))$, $i=1, 2$.

Suppose that $m_2 \neq m'_2$. If $m_2$ is right-negative, then $m$ is a right negative monomial and therefore $m$ is not dominant, this is a contradiction. Hence $m_2$ is not right-negative. Through the above discussion, $m_2$ is one of the following monomials
\begin{align*}
& \bar{m}_1=m'_2 A_{4, 4n-2}^{-1} =4_{4n}^{-1} 3_{4n-2} 1_{4n+4} 1_{4n+6} \cdots 1_{4n+2k+2}, \\
& \bar{m}_2=\bar{m}_1A_{3, 4n}^{-1} =3^{-1}_{4n+2} 2_{4n-1} 2_{4n+1} 1_{4n+4} 1_{4n+6} \cdots 1_{4n+2k+2},\\
& \bar{m}_3=\bar{m}_2A_{2, 4n+2}^{-1}=2_{4n-1} 2^{-1}_{4n+3} 1_{4n+2} 1_{4n+4} 1_{4n+6} \cdots 1_{4n+2k+2},\\
& \bar{m}_4=\bar{m}_2A_{2, 4n}^{-1}=3_{4n}2^{-1}_{4n+1}2^{-1}_{4n+3}1_{4n}1_{4n+2} 1_{4n+4} 1_{4n+6} \cdots 1_{4n+2k+2},\\
& \bar{m}_5=\bar{m}_4A_{3, 4n+2}^{-1}=4_{4n+2}3^{-1}_{4n+4}1_{4n}1_{4n+2} 1_{4n+4} 1_{4n+6} \cdots 1_{4n+2k+2},\\
& \bar{m}_6=\bar{m}_4A_{4, 4n+4}^{-1}=4^{-1}_{4n+6} 1_{4n}1_{4n+2} 1_{4n+4} 1_{4n+6} \cdots 1_{4n+2k+2}.
\end{align*}
We nextly discuss $m_2$ as follows:

\textbf{Case 1.} The factor $4_{4n}$  can only come from the monomials in $\chi_q(4_{4n-8})$, the monomials in $\chi_q(4_{4n-8})$ which contain a factor $4_{4n}$ are
\begin{equation}\label{negative factors 1}
\begin{split}
&3^{-1}_{4n+2}4_{4n-2}4_{4n},\ 2_{4n+1}2_{4n+3}3^{-1}_{4n+2}3^{-1}_{4n+4}4_{4n}, \
1_{4n+4}2_{4n+1}2^{-1}_{4n+5}3^{-1}_{4n+2}4_{4n}, \ 1^{-1}_{4n+4}1^{-1}_{4n+6}4_{4n},\\
&1_{4n+2}1_{4n+4}2^{-1}_{4n+3}2^{-1}_{4n+5}4_{4n}, \ 1^{-1}_{4n+6}2_{4n+1}3^{-1}_{4n+2}4_{4n},
\ 1_{4n+2}1^{-1}_{4n+6}2^{-1}_{4n+3}4_{4n}.
\end{split}
\end{equation}

The negative factors in (\ref{negative factors 1}) can not be canceled by monomials in $\chi_q(4_{4n-12})$. However, the negative factor $1^{-1}_{4n+4}1^{-1}_{4n+6}4_{4n}$ can be canceled by $\bar{m}_1$. Therefore $m_{2}=\bar{m}_1$ in the situation.

\textbf{Case 2.} The factor $3_{4n+2}$ can only come from the monomials in $\chi_q(4_{4n-8})$, the monomials in $\chi_q(4_{4n-8})$ which contain a factor $3_{4n+2}$ are
\begin{align}\label{negative factors 2}
1_{4n+2} 1_{4n+4} 2^{-1}_{4n+3} 2^{-1}_{4n+5} 3_{4n+2} 4^{-1}_{4n+4},\ 1_{4n+2} 1^{-1}_{4n+6} 2^{-1}_{4n+3} 3_{4n+2} 4^{-1}_{4n+4},\ 1^{-1}_{4n+4} 1^{-1}_{4n+6} 3_{4n+2} 4^{-1}_{4n+4}.
\end{align}

The negative factors in (\ref{negative factors 2}) can not be canceled by monomials in $\chi_q(4_{4n-12})$.

\textbf{Case 3.}  The factors $2_{4n+3}, 3_{4n+4}, 4_{4n+6}$ can only come from the monomials in $\chi_q(4_{4n-8})$, the monomials in $\chi_q(4_{4n-8})$ which contain factors $2_{4n+3}, 3_{4n+4}, 4_{4n+6}$ are
\begin{equation}\label{negative factors 3}
\begin{split}
&1_{4n-4}2_{4n+3}3^{-1}_{4n+4}, \quad 1^{-1}_{4n-2}2_{4n-3}2_{4n+3}3^{-1}_{4n+4}, \quad 2^{-1}_{4n-1}2_{4n+3}3_{4n-2}3^{-1}_{4n+4}, \\
&2_{4n+1}2_{4n+3}3^{-1}_{4n+2}3^{-1}_{4n+4}4_{4n}, \quad  2_{4n+1}2_{4n+3}3^{-1}_{4n+4}4^{-1}_{4n+4},\quad 1^{-1}_{4n+4}2_{4n+3}2^{-1}_{4n+7}, \\
&1^{-1}_{4n+4}1^{-1}_{4n+6}2_{4n+3}2_{4n+5}3^{-1}_{4n+6}, \quad 3^{-1}_{4n+8}4_{4n+6}, \quad 2^{-1}_{4n+5}2^{-1}_{4n+7}3_{4n+4}.
\end{split}
\end{equation}

The negative factors in (\ref{negative factors 3}) can not be canceled by monomials  in $\chi_q(4_{4n-12})$.

%Therefore $\bar{m}_3$, $\bar{m}_4$, $\bar{m}_5$, $\bar{m}_6$ are not in $\chi_q(4_{0}4_{4}\cdots 4_{4n-8})$.

Since
\begin{align*}
\chi_q(4_{0}4_{4}\cdots 4_{4n-8})\subseteq \chi_q(4_{0}4_{4}\cdots 4_{4n-12})\chi_q(4_{4n-8})\subseteq \chi_q(4_{0}4_{4}\cdots 4_{4n-16})\chi_q(4_{4n-12})\chi_q(4_{4n-8}),
\end{align*}
and $m=m_1m_2$ is dominant, $m_2=m'_2$ or $\bar{m}_1$.

Suppose that $m_{2}=\bar{m}_1$. By the above discussion, $m$ is in
\begin{align}\label{m 1}
\chi_q(4_{0}4_{4}\cdots 4_{4n-12})3_{4n-2}  1_{4n+8} \cdots 1_{4n+2k+2}.
\end{align}
%Only monomials in $\chi_q(4_{4n-12})$ can cancel $3_{4n-2}$. If $3_{4n-2}$ can be canceled by $\chi_q(4_{4n-12})$, then $m \in \chi_q(4_{0}4_{4} \cdots 4_{4n-16})4_{4n-6}4_{4n-4}1_{4n+8} \cdots 1_{4n+2k+2}$.

Suppose that $m_{2}=m'_2$. If $m_1 \neq m'_1$, then $m_1$ is right negative. Since $m$ is dominant, each factor with a negative power in $m_1$ needs to be canceled by a factor in $m'_2$. We have $\mathscr{M}(L(m'_1)) \subset \mathscr{M}(\chi_q(4_{0}4_{4}\cdots 4_{4n-12}))$ $\chi_q(L(4_{4n-8}))$. Only monomials in $\chi_q(L(4_{4n-8}))$ can cancel $4_{4n-4}$, $1_{4n+4}$, $1_{4n+6}$.
%(The monomial $4^{-1}_{4n-4}\notin \chi_q(L(4_{4n-16}))$, otherwise there exists negative factors in $\chi_q(L(4_{4n-8}))$ or $\chi_q(L(4_{4n-12}))$ which can not be canceled by $\chi_q(L(4_{4n-20}))$).
Therefore
$m_1$ is one of the sets
\begin{align*}
&\mathscr{M}(\chi_q(4_{0}4_{4}\cdots 4_{4n-12}))3_{4n-6}4^{-1}_{4n-4},\\
&\mathscr{M}(\chi_q(4_{0}4_{4}\cdots 4_{4n-12}))1^{-1}_{4n+4}1^{-1}_{4n+6}4_{4n},\\
&\mathscr{M}(\chi_q(4_{0}4_{4}\cdots 4_{4n-12}))1^{-1}_{4n+4}1^{-1}_{4n+6}3_{4n+2}4_{4n+4},\\
&\mathscr{M}(\chi_q(4_{0}4_{4}\cdots 4_{4n-12}))1^{-1}_{4n+4}1^{-1}_{4n+6}2_{4n+3}2_{4n+5}3^{-1}_{4n+6},\\
&\mathscr{M}(\chi_q(4_{0}4_{4}\cdots 4_{4n-12}))1^{-1}_{4n+4}2_{4n+3}2^{-1}_{4n+7},\\
&\mathscr{M}(\chi_q(4_{0}4_{4}\cdots 4_{4n-12}))1_{4n-4}1^{-1}_{4n+6},\\
&\mathscr{M}(\chi_q(4_{0}4_{4}\cdots 4_{4n-12}))1^{-1}_{4n-2}1^{-1}_{4n+6}2_{4n-3},\\
&\mathscr{M}(\chi_q(4_{0}4_{4}\cdots 4_{4n-12}))1^{-1}_{4n+6}2^{-1}_{4n-1}3_{4n-2},\\
&\mathscr{M}(\chi_q(4_{0}4_{4}\cdots 4_{4n-12}))1^{-1}_{4n+6}2_{4n+1}3^{-1}_{4n+2}4_{4n},\\
&\mathscr{M}(\chi_q(4_{0}4_{4}\cdots 4_{4n-12}))1_{4n+2}1^{-1}_{4n+6}2^{-1}_{4n+3}4_{4n},\\
&\mathscr{M}(\chi_q(4_{0}4_{4}\cdots 4_{4n-12}))1^{-1}_{4n+6}2_{4n+1}4^{-1}_{4n+4},\\
&\mathscr{M}(\chi_q(4_{0}4_{4}\cdots 4_{4n-12}))1_{4n+2}1^{-1}_{4n+6}2^{-1}_{2n+3}3_{4n+2}4^{-1}_{4n+4},\\
&\mathscr{M}(\chi_q(4_{0}4_{4}\cdots 4_{4n-12}))1_{4n+2}1^{-1}_{4n+6}2_{4n+5}3^{-1}_{4n+6}.
\end{align*}

By (\ref{negative factors 3}), we know that $m_{1} \notin \mathscr{M}(\chi_q(4_{0}4_{4}\cdots 4_{4n-12}))1^{-1}_{4n+6}2^{-1}_{4n-1}3_{4n-2}$. The factors which contain $1_{4n-2}$ in $\chi_q(4n-12)$ are
\begin{equation}\label{negative factors 4}
\begin{split}
&1_{4n-2}1_{4n}2^{-1}_{4n-1}2^{-1}_{4n+1}4_{4n-4},\  1_{4n-2}1_{4n}2^{-1}_{4n-1}2^{-1}_{4n+1}3_{4n-2}4^{-1}_{4n},\ 1_{4n-2}1^{-1}_{4n+2}2^{-1}_{4n-1}4_{4n-4},\\
&1_{4n-2}1^{-1}_{4n+2}2^{-1}_{4n-1}3_{4n-2}4^{-1}_{4n},\  1_{4n-2}1_{4n}3^{-1}_{4n+2},\ 1_{4n-2}1^{-1}_{4n+2}2_{4n+1}3^{-1}_{4n+2},\  1_{4n-2}2^{-1}_{4n+3}.
\end{split}
\end{equation}

The negative factors in (\ref{negative factors 4}) can not be canceled by monomials  in $\chi_q(4_{4n-16})$.

Since $\chi_q(4_{0}4_{4}\cdots 4_{4n-12})\subseteq \chi_q(4_{0}4_{4}\cdots 4_{4n-16})\chi_q(4n-12)$ and $1_{4n+4},$ $1_{4n+6},$ $2_{4n+7}$ $\notin \chi_q(4n-12)$. Therefore $m_1$ is in one of the sets
\begin{align*}
&\mathscr{M}(\chi_q(4_{0}4_{4}\cdots 4_{4n-12}))3_{4n-6}4^{-1}_{4n-4},\\
&\mathscr{M}(\chi_q(4_{0}4_{4}\cdots 4_{4n-12}))1_{4n-4}1^{-1}_{4n+6}.
\end{align*}

Therefore $m$ is in one of the sets
\begin{align}
&\mathscr{M}(\chi_q(4_{0}4_{4}\cdots 4_{4n-12}))3_{4n-6}1_{4n+4} 1_{4n+6} \cdots 1_{4n+2k+2},\label{m 2}\\
&\mathscr{M}(\chi_q(4_{0}4_{4}\cdots 4_{4n-12}))1_{4n-4}4_{4n-4} 1_{4n+4} 1_{4n+8} \cdots 1_{4n+2k+2}. \label{m 3}
\end{align}

Let $m_+ = m_1''m_2''$, where
\begin{align*}
m''_1=4_{0} 4_{4}\cdots 4_{4n-4}, \ m''_2=1_{4n+4} 1_{4n+6} \cdots 1_{4n+2k+2}.
\end{align*}

If $m$ is the expressions of (\ref{m 1}) (\ref{m 2}), (\ref{m 3}), we know that $m \notin  \mathscr{M}(\chi_q(m''_1)\chi_q(m''_2))$ by the Frenkel-Mukhin algorithm.

%otherwise $1_{4n+4}$, $1_{4n+6}$ must be canceled by $\chi_q(4_{4n-8})$, $\chi_q(4_{4n-4})$ respectively, then there exists negative factors $2^{-1}_{4n+5}$ or $3^{-1}_{4n+6}$ or $2^{-1}_{4n+7}$ or $2^{-1}_{4n+11}$ can not be canceled by $\chi_q({4_{0} 4_{4}\cdots 4_{4n-12}})$.

%If $m_{1}\in \mathscr{M}(\chi_q(4_{0}4_{4}\cdots 4_{4n-12}))1_{4n-4}1^{-1}_{4n+6}$, then
%\begin{align*}
%m \in \mathscr{M}(\chi_q(4_{0}4_{4}\cdots 4_{4n-12}))1_{4n-4}4_{4n-4} 1_{4n+4} 1_{4n+8} \cdots 1_{4n+2k+2} \notin  \mathscr{M}(\chi_q(m''_1)\chi_q(m''_2)).
%\end{align*}

%Since $A_{i, a}, i\in I, a\in \mathbb{C}^{\times}$ are algebraically independent, the expression of $n$ of the form $m_+\prod_{i\in I, a\in \mathbb{C}^{\times}} A_{i, a}^{-v_{i, a}}$, where $v_{i, a}$ are some integers, is unique. Suppose that the monomial $n$ is in $\chi_q(L(m''_1))  \chi_q(L(m''_2))$. Then $n=n'_1n'_2$, where $n'_i \in \mathscr{M}(L(m''_i)), i=1, 2$. We have $n'_2=m''_2$ and $n'_1=4_{0}4_{4}\cdots 4_{4n-12}3_{4n-6}$. The monomial $4_{0}4_{4}\cdots 4_{4n-12}3_{4n-6}$ is not in $\mathscr{M}(L(m''_1))$ by the Frenkel-Mukhin algorithm, that is, $m_1m_2$ is not in $\chi_q(m''_1)\chi_q(m''_2)$.

%Then $\mathscr{M}(L(m_+)) \subset \mathscr{M}(\chi_q(m'_1)\chi_q(m'_2)) \cap \mathscr{M}(\chi_q(m''_1)\chi_q(m''_2))$.

Therefore the only dominant monomial in $\mathscr{M}(\chi_q(m'_1)\chi_q(m'_2)) \cap \mathscr{M}(\chi_q(m''_1)\chi_q(m''_2))$ is $m_+$. Hence the only dominant monomial in $\chi_q(m_+)$ is $m_+$.

\subsection{The case of $\mathcal{\widetilde{T}}_{n, m, 0 , k}^{(0)}$ $(k \leq2)$, $\mathcal{\widetilde{T}}_{n, 0, l , k}^{(0)}$ $(k\leq 2)$, and $\mathcal{\widetilde{T}}_{0, m, l , k}^{(0)}$ $(k\leq 2)$}

Since the proof of each case is similar to each other, we give a detailed proof of the case of $\mathcal{\widetilde{T}}_{n, m, 0 , k}^{(0)}$ $(k \leq2)$ as follows.

Let $m_+=\widetilde{T}_{n, m, 0 , k}^{(0)}$ with $n, m, k \in\mathbb{Z}_{\geq 1}$ and $k \leq 2$. Then

\begin{align*}
m_+ = (4_{0}4_{4}\cdots 4_{4n-4}) (3_{4n+2} 3_{4n+6} \cdots 3_{4n+4m-2}) (1_{4n+4m+4}\cdots 1_{4n+4m+2k+2}).
\end{align*}

Let
\begin{align*}
& m'_1=(4_{0}4_{4}\cdots 4_{4n-4}) (3_{4n+2} 3_{4n+6} \cdots 3_{4n+4m-2}), \ m'_2= (1_{4n+4m+4}\cdots 1_{4n+4m+2k+2}), \\
& m''_1=(4_{0}4_{4}\cdots 4_{4n-4}), \ m''_2=(3_{4n+2} 3_{4n+6} \cdots 3_{4n+4m-2}) (1_{4n+4m+4}\cdots 1_{4n+4m+2k+2}).
\end{align*}

Then $\mathscr{M}(L(m_+)) \subset \mathscr{M}(\chi_q(m'_1)\chi_q(m'_2)) \cap \mathscr{M}(\chi_q(m''_1)\chi_q(m''_2))$.

By using similar arguments as in Subsection \ref{cases 6.1}, we can show that the only possible dominant monomial in $\chi_q(m'_1)\chi_q(m'_2)$ $\cap$ $\chi_q(m''_1)\chi_q(m''_2)$ is $m_+$. Hence the only dominant monomial in $\chi_q(m_+)$ is $m_+$.

%By the similar arguments as in the case of $\mathcal{\widetilde{T}}_{n, m, 0 , k}^{(0)}$ $(k\leq 2)$, we prove that the modules $\mathcal{\widetilde{T}}_{n, 0, l , k}^{(0)}$ $(k\leq 2)$ and $\mathcal{\widetilde{T}}_{0, m, l , k}^{(0)}$ $(k\leq 2)$ are special.

\subsection{The case of $\mathcal{\widetilde{T}}_{n, m, l , k}^{(0)}(k \leq2)$}

Let $m_+=\widetilde{T}_{n, m, l , k}^{(0)}$ with $n, m, l, k \in\mathbb{Z}_{\geq 1}$ and $k \leq 2$. Then

\begin{gather}
\begin{align*}
m_+=& (4_{0}4_{4}\cdots 4_{4n-4}) (3_{4n+2} 3_{4n+6} \cdots 3_{4n+4m-2})(2_{4n+4m+3}2_{4n+4m+5}\cdots2_{4n+4m+2l+1})\\
&(1_{4n+4m+2l+4}1_{4n+4m+2l+1+6}\cdots 1_{4n+4m+2l+2k+2}).
\end{align*}
\end{gather}

Let

\begin{gather}
\begin{align*}
m'_1&=(4_{0}4_{4}\cdots 4_{4n-4}) (3_{4n+2} 3_{4n+6} \cdots 3_{4n+4m-2})(2_{4n+4m+3}2_{4n+4m+5}\cdots 2_{4n+4m+2l+1}),\\
m'_2&=(1_{4n+4m+2l+4}1_{4n+4m+2l+1+6}\cdots 1_{4n+4m+2l+2k+2}), \\
m''_1&=(4_{0}4_{4}\cdots 4_{4n-4}),\\
m''_2&=(3_{4n+2} 3_{4n+6} \cdots 3_{4n+4m-2})(2_{4n+4m+3}2_{4n+4m+5}\cdots 2_{4n+4m+2l+1})(1_{4n+4m+2l+4}1_{4n+4m+2l+1+6}\cdots 1_{4n+4m+2l+2k+2}).
\end{align*}
\end{gather}

Then $\mathscr{M}(L(m_+)) \subset \mathscr{M}(\chi_q(m'_1)\chi_q(m'_2)) \cap \mathscr{M}(\chi_q(m''_1)\chi_q(m''_2))$.

By using similar arguments as in Subsection \ref{cases 6.1}, we can show that the only possible dominant monomial in $\chi_q(m'_1)\chi_q(m'_2)$ $\cap$ $\chi_q(m''_1)\chi_q(m''_2)$ is $m_+$. Hence the only dominant monomial in $\chi_q(m_+)$ is $m_+$.

\section{Proof of Theorem $\ref{M-system of type F4}$} \label{proof main1}

In this section, we prove Theorem $\ref{M-system of type F4}$.

\subsection{Classification of dominant monomials in the summands on both sides of the system}

In Section \ref{proof of special}, we have shown that for $s \in \mathbb{Z}, k, l, m, n \in \mathbb{Z}_{\geq 1},$ the modules
in Theorem \ref{special} are special.
Now we use the Frenkel-Mukhin algorithm to classify dominant monomials in the summands on both sides of the system in Theorem $\ref{M-system of type F4}$.

\begin{lemma}\label{lemma1}
The dominant monomials in each summand on the left and right hand sides of every equation in the system of Theorem \ref{M-system of type F4} are given in Table \ref{dominant monomials in the M-system of type F_4 1} and \ref{dominant monomials in the M-system of type F_4 2}.
\end{lemma}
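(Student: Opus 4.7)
The plan is to reduce the lemma to a finite, essentially mechanical case analysis that parallels the reasoning already carried out in Section \ref{proof of special}. By Theorem \ref{Her 07} and Theorem \ref{special}, every module appearing as a factor in a summand of the system in Theorem \ref{M-system of type F4} is special, so its $q$-character can be generated by the Frenkel-Mukhin algorithm of \cite{FM01} and its only dominant monomial is the highest $l$-weight monomial. Hence for summands consisting of a single module (such as the right hand side of (\ref{eqn 3a}) or of (\ref{eqn 19})), the claimed entries in Tables \ref{dominant monomials in the M-system of type F_4 1} and \ref{dominant monomials in the M-system of type F_4 2} are immediate and there is nothing more to prove.

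For a summand of the form $[L(m_1^{+})][L(m_2^{+})]$, I would analyze the dominant monomials of the product $\chi_q(m_1^{+})\chi_q(m_2^{+})$ by the same strategy as in Subsection \ref{cases 6.1}. Write an arbitrary monomial as $m = m_1 m_2$ with $m_i \in \mathscr{M}(L(m_i^{+}))$. If $m_2 = m_2^{+}$ then one is reduced to finding dominant monomials in $\chi_q(m_1^{+}) \cdot m_2^{+}$, which by specialness of $L(m_1^{+})$ forces $m_1$ to be either $m_1^{+}$ or one of a short list of non-highest but ``almost dominant'' monomials whose finitely many negative factors are all of the shape $Y_{i,b}^{-1}$ with $Y_{i,b}$ appearing in $m_2^{+}$. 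Using Lemma \ref{well-known}, right-negativity of $A_{i,a}^{-1}$, and the fact that products of right-negative monomials are right-negative (so cannot produce new dominant monomials), one obtains a finite menu of candidate pairs $(m_1, m_2)$. A symmetric analysis applies when $m_1 = m_1^{+}$ and $m_2 \neq m_2^{+}$, and the case $m_1 \neq m_1^{+}, m_2 \neq m_2^{+}$ is ruled out similarly because the combined right-negative tail cannot be canceled. In each of the resulting candidates one then checks, using the explicit Frenkel-Mukhin expansion, whether the product is in fact dominant; the surviving monomials will match the list tabulated in Tables \ref{dominant monomials in the M-system of type F_4 1}, \ref{dominant monomials in the M-system of type F_4 2}.

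For the summands involving the auxiliary modules $\widetilde{\mathcal{P}}^{(s)}_{0,0,l,k}$ and $\widetilde{\mathcal{P}}^{(s)}_{0,m,0,k}$ (appearing in (\ref{eqn 14})--(\ref{eqn 19})), I would first expand each $\widetilde{P}^{(s)}_{0,0,l,k}$ or $\widetilde{P}^{(s)}_{0,m,0,k}$ as the product of Kirillov-Reshetikhin type monomials prescribed in Section \ref{main results}, so that a summand becomes a product of several special modules. The classification of dominant monomials in such a longer product is carried out by iterating the two-factor argument: peel off one factor at a time, apply the right-negativity obstruction to eliminate candidates, and then fold the surviving monomial back into the next factor's analysis. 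Because the strings of spectral parameters appearing in each KR factor are ordered in $q$-powers, the only non-trivial cancelations occur at the boundary between consecutive strings, which keeps the case analysis bounded.

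The main obstacle will be the combinatorial bookkeeping rather than any genuinely new idea: there are many equations, and for the longer products arising from the $\widetilde{P}$-expansion each step opens several subcases corresponding to the local Frenkel-Mukhin moves $m \mapsto m A_{i,a}^{-1}$ at the ``right end'' of the current factor. To keep this manageable, I would organize the proof by induction on the number of KR factors in the product, reusing the two-factor analysis of Subsection \ref{cases 6.1} as the base case and leaning on the observation (also used there) that a dominant product can only arise when each candidate $m_i$ differs from its highest weight by a sequence of mutations whose negative factors are exactly matched by highest-weight factors from a single adjacent $m_j$. Once this framework is in place, producing the entries of Tables \ref{dominant monomials in the M-system of type F_4 1}--\ref{dominant monomials in the M-system of type F_4 2} is a direct, finite verification.
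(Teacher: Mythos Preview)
Your proposal is correct and follows essentially the same approach as the paper: both rely on the specialness established in Theorems \ref{Her 07} and \ref{special} so that the Frenkel--Mukhin algorithm applies, then use right-negativity of the $A_{i,a}^{-1}$ to eliminate most candidate products $m_1 m_2$, leaving only a finite explicit list. The paper's proof treats one representative equation (namely (\ref{eqn 13})) in detail and declares the rest similar; its case organization is slightly more streamlined than yours---rather than splitting symmetrically on whether $m_1=m_1^+$ or $m_2=m_2^+$, it first uses right-negativity to force the ``tail'' parts $m_3,m_4$ of both factors to stay at highest weight, then pins down $m_1=m_1'$ completely, and only then enumerates the finitely many admissible $m_2$---but this is a minor organizational difference, not a different idea.
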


\begin{table}[!htbp] \resizebox{.7\width}{.7\height}{
\begin{tabular}{|c|c|c|c|}
\hline %
Equations & Summands in the equations & $M$ & Dominant monomials \\
\hline %
(\ref{eqn 1}) &$\substack{\chi_{q}(\mathcal{T}^{(-2)}_{0,0,l-1,k})  \chi_{q}(\mathcal{T}^{(-2)}_{0,0,l,k})}$
& $\substack{M=T^{(-2)}_{0,0,l-1,k}  T^{(-2)}_{0,0,l,k}}$
& $\substack{M_{r}=M\prod _{i=0}^{r-1}A^{-1}_{1,aq^{-2l-1-2i}}, \\ 0\leq r\leq k}$ \\
\hline %
(\ref{eqn 1}) & $\substack{\chi_q(\mathcal T_{0,0,l,k-1}^{(-2)})  \chi_q(\mathcal{T}_{0,0,l-1,k+1}^{(-2)})}$
& $\substack{M=T_{0,0,l,k-1}^{(-2)}   T_{0,0,l-1,k+1}^{(-2)}}$
& $\substack{M_{r}=M\prod _{i=0}^{r-1}A^{-1}_{1,aq^{-2l-1-2i}},\\ 0\leq r\leq k-1}$ \\
\hline %
(\ref{eqn 1}) & $\substack{\chi_q(\mathcal T_{0,0,k+l,0}^{(-2)}) \chi_q(\mathcal T_{0,0,l-1,0}^{(-2)})}$
& $\substack{M= T_{0,0,k+l,0}^{(-2)} T_{0,0,l-1,0}^{(-2)}}$
& $M$\\
\hline %
(\ref{eqn 2})&$\substack{\chi_{q}(\mathcal{\widetilde{T}}_{n,m-1,0,0}^{(s+4)})
\chi_{q}(\mathcal{\widetilde{T}}_{n,m,0,0}^{(s)})}$
& $\substack{M=\widetilde{T}_{n,m-1,0,0}^{(s+4)} \widetilde{T}_{n,m,0,0}^{(s)}}$
& $ \substack{M_{r}=M\prod _{i=0}^{r-1}A^{-1}_{4,aq^{s+4n-2-4i}},\\ 0\leq r\leq n}$ \\
\hline %
(\ref{eqn 2})&$\substack{\chi_{q}(\mathcal{\widetilde{T}}_{n-1,m,0,0}^{(s+4)})
\chi_{q}(\mathcal{\widetilde{T}}_{n+1,m+1,0,0}^{(s)})}$
& $\substack{M=\widetilde{T}_{n-1,m,0,0}^{(s+4)} \widetilde{T}_{n+1,m-1,0,0}^{(s)}}$
& $\substack{M_{r}=M\prod _{i=0}^{r-1}A^{-1}_{4,aq^{s+4n-2-4i}}, \\-1\leq r\leq n-1}$\\
\hline %
(\ref{eqn 2}) & $\substack{\chi_{q}(\mathcal{\widetilde{T}}_{0,m-1,0,0}^{(s+4n+4)}) \chi_{q}(\mathcal{\widetilde{T}}_{0,n+m,0,0}^{(s)})}$
&$\substack{M=\widetilde{T}_{0,0,l-2,0}^{(s+4n+4)}  \widetilde{T}_{0,n,l,0}^{(s)}}$
& $M$\\
\hline %
(\ref{eqn 3a})&$\substack{\chi_{q}(\mathcal{\widetilde{T}}_{n,0,0,0}^{(s+4)})
\chi_{q}(\mathcal{\widetilde{T}}_{n,0,1,0}^{(s)})}$
& $\substack{M=\widetilde{T}_{n,0,0,0}^{(s+4)} \widetilde{T}_{n,0,1,0}^{(s)}}$
& $ \substack{M_{r}=M\prod _{i=0}^{r-1}A^{-1}_{4,aq^{s+4n-2-4i}},\\ 0\leq r\leq n}$ \\
\hline %
(\ref{eqn 3a})&$\substack{\chi_{q}(\mathcal{\widetilde{T}}_{n-1,0,1,0}^{(s+4)})
\chi_{q}(\mathcal{\widetilde{T}}_{n+1,0,0,0}^{(s)})}$
& $\substack{M=\widetilde{T}_{n-1,0,1,0}^{(s+4)} \widetilde{T}_{n+1,0,0,0}^{(s)}}$
& $\substack{M_{r}=M\prod _{i=0}^{r-1}A^{-1}_{4,aq^{s+4n-2-4i}}, \\-1\leq r\leq n-1}$\\
\hline %
(\ref{eqn 3a}) & $\substack{ \chi_{q}( \mathcal{ \widetilde{T}}_{0,n,l,0}^{(s)})}$
&$\substack{M=\widetilde{T}_{0,n,l,0}^{(s)}}$
& $M$\\
\hline %
(\ref{eqn 3b})&$\substack{\chi_{q}(\mathcal{\widetilde{T}}_{n,0,l-2,0}^{(s+4)})
\chi_{q}(\mathcal{\widetilde{T}}_{n,0,l,0}^{(s)})}$
& $\substack{M=\widetilde{T}_{n,0,l-2,0}^{(s+4)} \widetilde{T}_{n,0,l,0}^{(s)}}$
& $ \substack{M_{r}=M\prod _{i=0}^{r-1}A^{-1}_{4,aq^{s+4n-2-4i}},\\ 0\leq r\leq n}$ \\
\hline %
(\ref{eqn 3b})&$\substack{\chi_{q}(\mathcal{\widetilde{T}}_{n-1,0,l,0}^{(s+4)})
\chi_{q}(\mathcal{\widetilde{T}}_{n+1,0,l-2,0}^{(s)})}$
& $\substack{M=\widetilde{T}_{n-1,0,l,0}^{(s+4)} \widetilde{T}_{n+1,0,l-2,0}^{(s)}}$
& $\substack{M_{r}=M\prod _{i=0}^{r-1}A^{-1}_{4,aq^{s+4n-2-4i}}, \\-1\leq r\leq n-1}$\\
\hline %
(\ref{eqn 3b}) & $\substack{\chi_{q}(\mathcal{\widetilde{T}}_{0,0,l-2,0}^{(s+4n+4)}) \chi_{q}(\mathcal{\widetilde{T}}_{0,n,l,0}^{(s)})}$
&$\substack{M=\widetilde{T}_{0,0,l-2,0}^{(s+4n+4)}  \widetilde{T}_{0,n,l,0}^{(s)}}$
& $M$\\
\hline %
(\ref{eqn 4a}) & $\substack{\chi_{q}(\mathcal{\widetilde{T}}_{0,m,0,0}^{(s+4)}) \chi_{q}(\mathcal{\widetilde{T}}_{0,m,1,0}^{(s)})}$
& $\substack{M=\widetilde{T}_{0,m,0,0}^{(s+4)}  \widetilde{T}_{0,m,1,0}^{(s)}}$
& $\substack{M_{r}=M\prod _{i=0}^{r-1}A^{-1}_{3,aq^{s+4m-4i}}, \\
0\leq r \leq m}$ \\
\hline %
(\ref{eqn 4a}) & $\substack{\chi_{q}(\mathcal{\widetilde{T}}_{0,m-1,1,0}^{(s+4)}) \chi_{q}(\mathcal{\widetilde{T}}_{0,m+1,0,0}^{(s)})}$
&$\substack{M=\widetilde{T}_{0,m,0,0}^{(s+4)}  \widetilde{T}_{0,m,1,0}^{(s)}}$
&$\substack{M_{r}=M\prod _{i=0}^{r-1}A^{-1}_{3,aq^{s+4m-4i}}, \\0\leq r \leq m-1}$ \\
\hline %
(\ref{eqn 4a}) & $\substack{\chi_{q}(\mathcal{\widetilde{T}}_{m,0,0,0}^{(s+4)}) \chi_{q}(\mathcal{\widetilde{T}}_{0,0,1+2m,0}^{(s)})}$
&$\substack{M=\widetilde{T}_{m,0,0,0}^{(s+4)}  \widetilde{T}_{0,0,1+2m,0}^{(s)}}$
& $M$\\
\hline %
(\ref{eqn 4b}) & $\substack{\chi_{q}(\mathcal{\widetilde{T}}_{0,m,l-2,0}^{(s+4)}) \chi_{q}(\mathcal{\widetilde{T}}_{0,m,l,0}^{(s)})}$
& $\substack{M=\widetilde{T}_{0,m,l-2,0}^{(s+4)}  \widetilde{T}_{0,m,l,0}^{(s)}}$
& $\substack{M_{r}=M\prod _{i=0}^{r-1}A^{-1}_{3,aq^{s+4m-4i}}, \\
0\leq r \leq m}$ \\
\hline %
(\ref{eqn 4b}) & $\substack{\chi_{q}(\mathcal{\widetilde{T}}_{0,m-1,l,0}^{(s+4)}) \chi_{q}(\mathcal{\widetilde{T}}_{0,m+1,l-2,0}^{(s)})}$
&$\substack{M=\widetilde{T}_{0,m,l-2,0}^{(s+4)}  \widetilde{T}_{0,m,l,0}^{(s)}}$
&$\substack{M_{r}=M\prod _{i=0}^{r-1}A^{-1}_{3,aq^{s+4m-4i}}, \\0\leq r \leq m-1}$ \\
\hline %
(\ref{eqn 4b}) & $\substack{\chi_{q}(\mathcal{\widetilde{T}}_{m,0,0,l-2}^{(s+4)}) \chi_{q}(\mathcal{\widetilde{T}}_{0,0,l+2m,0}^{(s)})}$
&$\substack{M=\widetilde{T}_{m,0,0,l-2}^{(s+4)}  \widetilde{T}_{0,0,l+2m,0}^{(s)}}$
& $M$\\
\hline %
(\ref{eqn 5}) & $\substack{\chi_{q}(\mathcal{\widetilde{T}}_{n,m-1,m,0}^{(s+4)}) \chi_{q}(\mathcal{\widetilde{T}}_{n,m,l,0}^{(s)})}$
&$\substack{M=\widetilde{T}_{n,m-1,l,0}^{(s+4)}  \widetilde{T}_{n,m,l,0}^{(s)}}$
&$\substack{M_{r}=M \prod _{i=0}^{r-1}A^{-1}_{4,aq^{s+4n-2-4i}}, \\ 0\leq r \leq n}$ \\
\hline %
(\ref{eqn 5}) & $\substack{\chi_{q}(\mathcal{\widetilde{T}}_{n-1,m,l,0}^{(s+4)})  \chi_{q}(\mathcal{\widetilde{T}}_{n+1,m-1,l,0}^{(s)})}$
&$\substack{M=\widetilde{T}_{n-1,m,l,0}^{(s+4)}  \widetilde{T}_{n+1,m-1,l,0}^{(s)}}$
&$\substack{M_{r}=M\prod _{i=0}^{r-1}A^{-1}_{4,aq^{s+4n-2-4i}}, \\0\leq r \leq n-1}$ \\
\hline %
(\ref{eqn 5}) & $\substack{\chi_{q}(\mathcal{\widetilde{T}}_{0,m-1,l,0}^{(s+4n+4)}) \chi_{q}(\mathcal{\widetilde{T}}_{0,n+m,l,0}^{(s)})}$
&$\substack{M=\widetilde{T}_{0,m-1,l,0}^{(s+4n+4)})  \widetilde{T}_{0,n+m,l,0}^{(s)}}$
&$M$\\
\hline %
(\ref{eqn 6a}) & $\substack{\chi_{q}(\mathcal{\widetilde{T}}_{n,0,0,0}^{(s+4)}) \chi_{q}(\mathcal{\widetilde{T}}_{n,0,0,k}^{(s)})}$
&$\substack{M=\widetilde{T}_{n,0,0,0}^{(s+4)}  \widetilde{T}_{n,0,0,k}^{(s)}}$
&$\substack{M_{r}=M\prod _{i=0}^{r-1}A^{-1}_{4,aq^{s+4n-2-4i}},\\ 0\leq r \leq n}$ \\
\hline %
(\ref{eqn 6a}) & $\substack{\chi_{q}(\mathcal{\widetilde{T}}_{n-1,0,0,k}^{(s+4)}) \chi_{q}(\mathcal{\widetilde{T}}_{n+1,0,0,0}^{(s)})}$
&$\substack{M=\widetilde{T}_{n-1,0,0,k}^{(s+4)} \widetilde{T}_{n+1,0,0,0}^{(s)}}$
&$\substack{M_{r}=M\prod _{i=0}^{r-1}A^{-1}_{4,aq^{s+4n-2-4i}},\\ 0 \leq r \leq n-1}$ \\
\hline %
(\ref{eqn 6a}) & $\substack{\chi_{q}(\mathcal{\widetilde{T}}_{0,n,0,k}^{(s)})}$
&$\substack{M=\widetilde{T}_{0,n,0,k}^{(s)}}$
&$M$ \\
\hline %
(\ref{eqn 7a}) & $\substack{\chi_{q}(\mathcal{\widetilde{T}}_{0,m,0,0}^{(s+4)}) \chi_{q}(\mathcal{\widetilde{T}}_{0,m,0,k}^{(s)})}$
&$\substack{M=\widetilde{T}_{0,m,0,0}^{(s+4)} \widetilde{T}_{0,m,0,k}^{(s)}}$
&$ \substack{M_{r}=M\prod _{i=0}^{r-1}A^{-1}_{3,aq^{s+4m-4i}}, \\0 \leq r \leq m}$ \\
\hline %
(\ref{eqn 7a}) & $\substack{\chi_{q}(\mathcal{\widetilde{T}}_{0,m-1,0,k}^{(s+4)}) \chi_{q}(\mathcal{\widetilde{T}}_{0,m+1,0,0}^{(s)})}$
&$\substack{M=\widetilde{T}_{0,m-1,0,k}^{(s+4)} \widetilde{T}_{0,m+1,0,0}^{(s)}}$
&$\substack{M_{r}=M\prod _{i=0}^{r-1}A^{-1}_{3,aq^{s+4m-4i}}, \\ 0 \leq r \leq m-1}$ \\
\hline %
(\ref{eqn 7a}) & $\substack{\chi_{q}(\mathcal{\widetilde{T}}_{m,0,0,0}^{(s+4)})
\chi_{q}(\mathcal{\widetilde{T}}_{0,0,2m,k}^{(s)})}$
&$\substack{M=\widetilde{T}_{m,0,0,0}^{(s+4)} \widetilde{T}_{0,0,2m,k}^{(s)}}$
&$M$ \\
\hline %
(\ref{eqn 8}) & $\substack{\chi_{q}(\mathcal{\widetilde{T}}_{n,0,0,k-1}^{(s+4)}) \chi_{q}(\mathcal{\widetilde{T}}_{n,0,1,k}^{(s)})}$
& $ \substack{M=\widetilde{T}_{n,0,0,k-1}^{(s+4)} \widetilde{T}_{n,0,1,k}^{(s)}}$
& $ \substack{M_{r}=M\prod _{i=0}^{r-1}A^{-1}_{4,aq^{s+4n-2-4i}},\\ 0 \leq r \leq n}$\\
\hline %
(\ref{eqn 8}) & $\substack{\chi_{q}(\mathcal{\widetilde{T}}_{n-1,0,1,k}^{(s+4)}) \chi_{q}(\mathcal{\widetilde{T}}_{n+1,0,0,k-1}^{(s)})}$
&$\substack{M=\widetilde{T}_{n-1,0,1,k}^{(s+4)} \widetilde{T}_{n+1,0,0,k-1}^{(s)}}$
&$\substack{M_{r}=M \prod _{i=0}^{r-1}A^{-1}_{4,aq^{s+4n-2-4i}},\\ 0 \leq r \leq n-1}$\\
\hline %
(\ref{eqn 8}) & $\substack{\chi_{q}(\mathcal{\widetilde{T}}_{0,0,0,k-1}^{(s+4n+4)})
\chi_{q}(\mathcal{\widetilde{T}}_{0,n,1,k}^{(s)})}$
&$\substack{M=\widetilde{T}_{0,0,0,k-1}^{(s+4n+4)} \widetilde{T}_{0,n,l,k}^{(s)}}$
&$M$\\
\hline %
(\ref{eqn 9}) & $\substack{\chi_{q}(\mathcal{\widetilde{T}}_{n,0,l-2,k}^{(s+4)}) \chi_{q}(\mathcal{\widetilde{T}}_{n,0,l,k}^{(s)})}$
&$\substack{M=\widetilde{T}_{n,0,l-2,k}^{(s+4)} \widetilde{T}_{n,0,l,k}^{(s)}}$
&$\substack{M_{r}=M\prod _{i=0}^{r-1}A^{-1}_{4,aq^{s+4n-2-4i}},\\ 0 \leq r \leq n}$\\
\hline %
(\ref{eqn 9}) &$\substack{\chi_{q}(\mathcal{\widetilde{T}}_{n-1,0,l,k}^{(s+4)}) \chi_{q}(\mathcal{\widetilde{T}}_{n+1,0,l-2,k}^{(s)})}$
&$\substack{M=\widetilde{T}_{n-1,0,l,k}^{(s+4)} \widetilde{T}_{n+1,0,l-2,k}^{(s)}}$
&$ \substack{M_{r}=M\prod _{i=0}^{r-1}A^{-1}_{4,aq^{s+4n-2-4i}},\\ 0 \leq r \leq n-1}$\\
\hline %
(\ref{eqn 9}) &$\substack{\chi_{q}(\mathcal{\widetilde{T}}_{0,0,l-2,k}^{(s+4n+4)})
\chi_{q}(\mathcal{\widetilde{T}}_{0,n,l,k}^{(s)})}$
&$\substack{M=\widetilde{T}_{0,0,l-2,k}^{(s+4n+4)} \widetilde{T}_{0,n,l,k}^{(s)}}$
&$M$\\
\hline %
(\ref{eqn 10}) & $\substack{\chi_{q}(\mathcal{\widetilde{T}}_{0,m,0,k-1}^{(s+4)}) \chi_{q}(\mathcal{\widetilde{T}}_{0,m,1,k}^{(s)})}$
&$ \substack{M=\widetilde{T}_{0,m,0,k-1}^{(s+4)} \widetilde{T}_{0,m,1,k}^{(s)}}$
&$\substack{M_{r}=M\prod _{i=0}^{r-1}A^{-1}_{3,aq^{s+4m-4i}}, \\0 \leq r \leq m}$\\
\hline %
(\ref{eqn 10}) & $\substack{\chi_{q}(\mathcal{\widetilde{T}}_{0,m-1,1,k}^{(s+4)}) \chi_{q}(\mathcal{\widetilde{T}}_{0,m+1,0,k-1}^{(s)})}$
&$\substack{M=\widetilde{T}_{0,m-1,1,k}^{(s+4)} \widetilde{T}_{0,m+1,0,k-1}^{(s)}}$
&$\substack{M_{r}=M\prod _{i=0}^{r-1}A^{-1}_{3,aq^{s+4m-4i}}, \\0 \leq r \leq m-1}$\\
\hline %
(\ref{eqn 10}) & $\substack{\chi_{q}(\mathcal{\widetilde{T}}_{m,0,0,k-1}^{(s+4)})
\chi_{q}(\mathcal{\widetilde{T}}_{0,0,1+2m,k}^{(s)})}$
&$\substack{M=\widetilde{T}_{m,0,0,k-1}^{(s+4)} \widetilde{T}_{0,0,1+2m,k}^{(s)}} $
&$M$ \\
\hline %
(\ref{eqn 11}) & $\substack{\chi_{q}(\mathcal{\widetilde{T}}_{0,m,l-2,k}^{(s+4)}) \chi_{q}(\mathcal{\widetilde{T}}_{0,m,l,k}^{(s)})}$
& $\substack{M=\widetilde{T}_{0,m,l-2,k}^{(s+4)} \widetilde{T}_{0,m,l,k}^{(s)}}$
& $\substack{M_{r}=M\prod _{i=0}^{r-1}A^{-1}_{3,aq^{s+4m-4i}},\\ 0 \leq r \leq m}$\\
\hline %
(\ref{eqn 11}) & $\substack{\chi_{q}(\mathcal{\widetilde{T}}_{0,m-1,l,k}^{(s+4)}) \chi_{q}(\mathcal{\widetilde{T}}_{0,m+1,l-2,k}^{(s)})}$
& $\substack{M=\widetilde{T}_{0,m-1,l,k}^{(s+4)} \widetilde{T}_{0,m+1,l-2,k}^{(s)}}$
& $\substack{M_{r}=M\prod _{i=0}^{r-1}A^{-1}_{3,aq^{s+4m-4i}}, \\0 \leq r \leq m-1}$\\
\hline %
(\ref{eqn 11}) & $\substack{\chi_{q}(\mathcal{\widetilde{T}}_{m,0,l-2,k}^{(s+4)})
\chi_{q}(\mathcal{\widetilde{T}}_{0,0,l+2m,k}^{(s)}) }$
& $\substack{M=\widetilde{T}_{m,0,l-2,k}^{(s+4)} \widetilde{T}_{0,0,l+2m,k}^{(s)} }$
& $M$\\
\hline %
(\ref{eqn 12}) & $\substack{\chi_{q}(\mathcal{\widetilde{T}}_{n,m-1,0,k}^{(s+4)}) \chi_{q}(\mathcal{\widetilde{T}}_{n,m,0,k}^{(s)})}$
& $\substack{M=\widetilde{T}_{n,m-1,0,k}^{(s+4)} \widetilde{T}_{n,m,0,k}^{(s)}}$
& $\substack{M_{r}=M\prod _{i=0}^{r-1}A^{-1}_{4,aq^{s+4n-2-4i}}, \\0 \leq r \leq n}$ \\
\hline %
(\ref{eqn 12}) & $\substack{\chi_{q}(\mathcal{\widetilde{T}}_{n-1,m,0,k}^{(s+4)}) \chi_{q}(\mathcal{\widetilde{T}}_{n+1,m-1,0,k}^{(s)})}$
&$\substack{M=\widetilde{T}_{n-1,m,0,k}^{(s+4)} \widetilde{T}_{n+1,m-1,0,k}^{(s)}}$
&$\substack{M_{r}=M\prod _{i=0}^{r-1}A^{-1}_{4,aq^{s+4n-2-4i}}, \\0 \leq r \leq n-1}$ \\
\hline %
(\ref{eqn 12}) & $\substack{\chi_{q}(\mathcal{\widetilde{T}}_{0,m-1,0,k}^{(s+4)})
\chi_{q}(\mathcal{\widetilde{T}}_{0,n+m,0,k}^{(s)})}$
&$\substack{M=\widetilde{T}_{0,m-1,0,k}^{(s+4)} \widetilde{T}_{0,n+m,0,k}^{(s)} }$
&$M$ \\
\hline %
(\ref{eqn 13}) & $\substack{ \chi_{q}(\mathcal{\widetilde{T}}_{n, m-1, l, k}^{(s+4)})
\chi_{q}(\mathcal{\widetilde{T}}_{n, m, l, k}^{(s)})}$
&$\substack{M=\widetilde{T}_{n,m-1,l,k}^{(s+4)} \widetilde{T}_{n,m,l,k}^{(s)}}$
&$\substack{M_{r}=M\prod _{i=0}^{r-1}A^{-1}_{4,aq^{s+4n-2-4i}}, \\0 \leq r \leq n} $ \\
\hline %
(\ref{eqn 13}) & $\substack{\chi_{q}(\mathcal{\widetilde{T}}_{n-1,m,l,k}^{(s+4)}) \chi_{q}(\mathcal{\widetilde{T}}_{n+1,m-1,l,k}^{(s)})}$
&$\substack{M=\widetilde{T}_{n-1,m,l,k}^{(s+4)} \widetilde{T}_{n+1,m-1,l,k}^{(s)}}$
&$\substack{M_{r}=M\prod _{i=0}^{r-1}A^{-1}_{4,aq^{s+4n-2-4i}}, \\0 \leq r \leq n-1} $ \\
\hline %
(\ref{eqn 13}) & $\substack{\chi_{q}(\mathcal{\widetilde{T}}_{0,m-1,l,k}^{(s+4n+4)}) \chi_{q}(\mathcal{\widetilde{T}}_{0,n+m,l,k}^{(s)})}$
&$\substack{M=\widetilde{T}_{0,n+m,l,k}^{(s)} \widetilde{T}_{0,m-1,l,k}^{(s+4n+4)}}$
&$M$ \\
\hline %
\end{tabular}}
\caption{Classification of dominant monomials in the system in Theorem \ref{M-system of type F4}.}
\label{dominant monomials in the M-system of type F_4 1}
\end{table}

\begin{table}[!htbp] \resizebox{.65\width}{.65\height}{
\begin{tabular}{|c|c|c|c|}
\hline %
Equations & Summands in the equations & $M$ & Dominant monomials \\
\hline %
(\ref{eqn 14}) & $\substack{ \chi_{q}(\widetilde{\mathcal{P}}^{(s+2)}_{0,0,1,k-1}) \chi_{q}(\widetilde{\mathcal{P}}^{(s)}_{0,0,1,k})}$
&$\substack{M=\widetilde{P}^{(s+2)}_{0,0,1,k-1}\widetilde{P}^{(s)}_{0,0,1,k}}$
&$\substack{M_{0}=M, M_{1}=M A^{-1}_{2,aq^{s+4}}} $ \\
\hline %
(\ref{eqn 14}) & $\substack{\chi_{q}(\mathcal{T}^{(s+10)}_{0,0,0,k-4}) \chi_{q}(\mathcal{T}^{(s+6)}_{0,0,0,k-2})\chi_{q}(\mathcal{T}^{(s+2)}_{0,0,0,k})\chi_{q}(\widetilde{\mathcal{P}}^{(s)}_{0,0,2,k-1})}$
&$\substack{M=T^{(s+10)}_{0,0,0,k-4} T^{(s+6)}_{0,0,0,k-2} T^{(s+2)}_{0,0,0,k}\widetilde{P}^{(s)}_{0,0,2,k-1}}$
&$\substack{ M } $ \\
\hline %
(\ref{eqn 14}) & $\substack{\chi_{q}(\mathcal{T}^{(s+12)}_{0,0,0,k-5}) \chi_{q}(\mathcal{T}^{(s+8)}_{0,0,0,k-3}) \chi_{q}(\mathcal{T}^{(s+4)}_{0,0,0,k-1}) \chi_{q}(\mathcal{T}^{(s)}_{0,0,0,k+1}) \chi_{q}(\widetilde{\mathcal{P}}^{(s+2)}_{0,1,0,k-2})}$
&$\substack{M=T^{(s+12)}_{0,0,0,k-5} T^{(s+8)}_{0,0,0,k-3} T^{(s+4)}_{0,0,0,k-1}
T^{(s)}_{0,0,0,k+1} \widetilde{P}^{(s+2)}_{0,1,0,k-2}}$
&$\substack{ M } $ \\
\hline %
(\ref{eqn 15}) & $\substack{ \chi_{q}(\widetilde{\mathcal{P}}^{(s+2)}_{0,0,l,k-1}) \chi_{q}(\widetilde{\mathcal{P}}^{(s)}_{0,0,l,k})}$
&$\substack{M=\widetilde{P}^{(s+2)}_{0,0,l,k-1} \widetilde{P}^{(s)}_{0,0,l,k}}$
&$\substack{M_{r}=M\prod _{i=0}^{r-1}A^{-1}_{2,aq^{s+2l+2-2i}}, \\0 \leq r \leq l} $ \\
\hline %
(\ref{eqn 15}) & $\substack{\chi_{q}(\widetilde{\mathcal{P}}^{(s+2)}_{0,0,l-1,k}) \chi_{q}(\widetilde{\mathcal{P}}^{(s)}_{0,0,l+1,k-1})}$
&$\substack{M=\widetilde{P}^{(s+2)}_{0,0,l-1,k} \widetilde{P}^{(s)}_{0,0,l+1,k-1}}$
&$\substack{M_{r}=M\prod _{i=0}^{r-1}A^{-1}_{2,aq^{s+2l+2-2i}}, \\0 \leq r \leq l-1} $ \\
\hline %
(\ref{eqn 15}) & $\substack{\chi_{q}(\mathcal{T}^{(s+2l+10)}_{0,0,0,k-5}) \chi_{q}(\mathcal{T}^{(s)}_{0,0,0,k+l})
\chi_{q}(\widetilde{\mathcal{P}}^{(s+2)}_{0,\frac{l+1}{2},0,k-2}) \chi_{q}(\widetilde{\mathcal{P}}^{(s+4)}_{0,\frac{l-1}{2},0,k-1})}$
&$\substack{M=T^{(s+2l+10)}_{0,0,0,k-5} T^{(s)}_{0,0,0,k+l} \widetilde{P}^{(s+2)}_{0,\frac{l+1}{2},0,k-2}
\widetilde{\mathcal{P}}^{(s+4)}_{0,\frac{l-1}{2},0,k-1}}$
&$\substack{ M } $ \\
\hline %
(\ref{eqn 16}) & $\substack{\chi_{q}(\widetilde{\mathcal{P}}^{(s+2)}_{0,0,l,k-1})
 \chi_{q}(\widetilde{\mathcal{P}}^{(s)}_{0,0,l,k})}$
&$\substack{M=\widetilde{P}^{(s+2)}_{0,0,l,k-1} \widetilde{P}^{(s)}_{0,0,l,k}}$
&$\substack{M_{r}=M_{r}=M\prod _{i=0}^{r-1}A^{-1}_{2,aq^{s+2l+2-2i}}, \\0 \leq r \leq l} $ \\
\hline %
(\ref{eqn 16}) & $\substack{\chi_{q}(\widetilde{\mathcal{P}}^{(s+2)}_{0,0,l-1,k}) \chi_{q}(\widetilde{\mathcal{P}}^{(s)}_{0,0,l+1,k-1})}$
&$\substack{M=\widetilde{P}^{(s+2)}_{0,0,l-1,k} \widetilde{P}^{(s)}_{0,0,l+1,k-1}}$
&$\substack{M_{r}=M_{r}=M\prod _{i=0}^{r-1}A^{-1}_{2,aq^{s+2l+2-2i}}, \\0 \leq r \leq l-1} $ \\
\hline %
(\ref{eqn 16}) & $\substack{\chi_{q}(\mathcal{T}^{(s+2l+10)}_{0,0,0,k-5})\chi_{q}(\mathcal{T}^{(s)}_{0,0,0,k+l})
\chi_{q}(\widetilde{\mathcal{P}}^{(s+4)}_{0,\frac{l}{2},0,k-2}) \chi_{q}(\widetilde{\mathcal{P}}^{(s+2)}_{0,\frac{l}{2},0,k-1})}$
&$\substack{M=T^{(s+2l+10)}_{0,0,0,k-5} T^{(s)}_{0,0,0,k+l} \widetilde{P}^{(s+4)}_{0,\frac{l}{2},0,k-2}
\widetilde{\mathcal{P}}^{(s+2)}_{0,\frac{l}{2},0,k-1}}$
&$\substack{ M } $ \\
\hline %
\ref{eqn 17}) & $\substack{\chi_{q}(\widetilde{\mathcal{P}}^{(s+4)}_{0,1,0,k-2}) \chi_{q}(\widetilde{\mathcal{P}}^{(s)}_{0,1,0,k})}$
&$\substack{M=\widetilde{P}^{(s+4)}_{0,1,0,k-2} \widetilde{P}^{(s)}_{0,1,0,k}}$
&$ \substack{M_{r}=M\prod _{i=0}^{r-1}A^{-1}_{3,aq^{s+4m-4i}}, \\0 \leq r \leq m }$ \\
\hline %
(\ref{eqn 17}) & $\substack{\chi_{q}(\mathcal{T}^{(s+8)}_{0,0,0,k-2}) \chi_{q}(\mathcal{T}^{(s+4)}_{0,0,0,k}) \chi_{q}(\widetilde{\mathcal{P}}^{(s)}_{0,2,0,k-2})}$
&$\substack{M=T^{(s+8)}_{0,0,0,k-2} T^{(s+4)}_{0,0,0,k} \widetilde{P}^{(s)}_{0,2,0,k-2}}$
&$\substack{M_{r}=M\prod _{i=0}^{r-1}A^{-1}_{3,aq^{s+4m-4i}}, \\ 0 \leq r \leq m-1 }$ \\
\hline %
(\ref{eqn 17}) & $\substack{\chi_{q}(\widetilde{\mathcal{T}}^{(s+4)}_{1,0,0,k-2}) \chi_{q}(\widetilde{\mathcal{P}}^{(s)}_{0,0,2,k})}$
&$\substack{M=\widetilde{T}^{(s+4)}_{1,0,0,k-2} \widetilde{P}^{(s)}_{0,0,2,k} }$
&$\substack{ M } $ \\
\hline %
(\ref{eqn 18}) & $\substack{\chi_{q}(\widetilde{\mathcal{P}}^{(s+4)}_{0,m,0,k-2}) \chi_{q}(\widetilde{\mathcal{P}}^{(s)}_{0,m,0,k})}$
& $ \substack{M=\widetilde{P}^{(s+4)}_{0,1,0,k-2} \widetilde{P}^{(s)}_{0,1,0,k}}$
& $ \substack{M_{r}=M\prod _{i=0}^{r-1}A^{-1}_{3,aq^{s+4m-4i}}, \\0 \leq r \leq m }$\\
\hline %
(\ref{eqn 18}) & $\substack{\chi_{q}(\widetilde{\mathcal{P}}^{(s+4)}_{0,m-1,0,k}) \chi_{q}(\widetilde{\mathcal{P}}^{(s)}_{0,m+1,0,k-2})}$
&$\substack{M=\widetilde{P}^{(s+4)}_{0,m-1,0,k} \widetilde{P}^{(s)}_{0,m+1,0,k-2}}$
&$\substack{M_{r}=M\prod _{i=0}^{r-1}A^{-1}_{3,aq^{s+4m-4i}}, \\ 0 \leq r \leq m-1 }$\\
\hline %
(\ref{eqn 18}) & $\substack{\chi_{q}(\widetilde{\mathcal{T}}^{(s+4)}_{m,0,0,k-2}) \chi_{q}(\widetilde{\mathcal{P}}^{(s)}_{0,0,2m,k})}$
&$\substack{M= \widetilde{T}^{(s+4)}_{m,0,0,k-2} \widetilde{P}^{(s)}_{0,0,2m,k}}$
&$\substack{ M } $ \\
\hline %
(\ref{eqn 19})&$\substack{\chi_{q}(\widetilde{\mathcal{T}}^{(s+4)}_{n,0,0,k-2})
\chi_{q}(\widetilde{\mathcal{T}}^{(s)}_{n,0,0,k})}$
& $\substack{M=\widetilde{T}^{(s+4)}_{n,0,0,k-2} \widetilde{T}^{(s)}_{n,0,0,k}}$
& $ \substack{M_{r}=M\prod _{i=0}^{r-1}A^{-1}_{4,aq^{s+4n-2-4i}},\\ 0\leq r\leq n}$ \\
\hline %
(\ref{eqn 19})&$\substack{\chi_{q}(\widetilde{\mathcal{T}}^{(s+4)}_{n-1,0,0,k})
\chi_{q}(\widetilde{\mathcal{T}}^{(s)}_{n+1,0,0,k-2})}$
& $\substack{M=\widetilde{T}^{(s+4)}_{n-1,0,0,k} \widetilde{T}^{(s)}_{n+1,0,0,k-2}}$
& $\substack{M_{r}=M\prod _{i=0}^{r-1}A^{-1}_{4,aq^{s+4n-2-4i}}, \\ 0\leq r\leq n-1}$\\
\hline %
(\ref{eqn 19}) & $\substack{\chi_{q}(\widetilde{\mathcal{P}}^{(s)}_{0,n,0,k})}$
&$\substack{M=\widetilde{P}^{(s)}_{0,n,0,k}}$
&$\substack{ M } $ \\
\hline %
\end{tabular}}
\caption{Classification of dominant monomials in the system in Theorem \ref{M-system of type F4} (continued).}
\label{dominant monomials in the M-system of type F_4 2}
\end{table}

\begin{proof}
We will prove the case of (\ref{eqn 13}), the other cases are similar.

Let $m'_{1}=\widetilde{T}_{n,m-1,l,k}^{(s+4)}$, $m'_{2}=\widetilde{T}_{n,m,l,k}^{(s)}$. Without loss of generality, we may assume that $s=0$. Then

\begin{align*}
m'_{1}=&(4_{4}\cdots 4_{4n})(3_{4n+6}3_{4n+10}\cdots 3_{4n+4m-2})(2_{4n+4m+3}2_{4n+4m+5}\cdots 2_{4n+4m+2l+1})\\
&(1_{4n+4m+2l+4} 1_{4n+4m+2l+6} \cdots 1_{4n+4m+2l+2k+2}),\\
m'_{2}=&(4_{0}4_{4} \cdots4_{4n-4})(3_{4n+2}3_{4n+6}\cdots3_{4n+4m-2})(2_{4n+4m+3}2_{4n+4m+5}\cdots2_{4n+4m+2l+1})\\
&(1_{4n+4m+2l+4} 1_{4n+4m+2l+6} \cdots 1_{4n+4m+2l+2k+2}).
\end{align*}

Let $m=m_{1}m_{2}$ be a dominant monomial, where $m_{i} \in \chi_{q}(m'_{i}), i=1, 2$. We denote

\begin{align*}
m_{3}=&(3_{4n+6}3_{4n+10}\cdots 3_{4n+4m-2})(2_{4n+4m+3}2_{4n+4m+5}\cdots 2_{4n+4m+2l+1})\\
&(1_{4n+4m+2l+4} 1_{4n+4m+2l+6} \cdots 1_{4n+4m+2l+2k+2}),\\
m_{4}=&(3_{4n+2}3_{4n+6}\cdots3_{4n+4m-2})(2_{4n+4m+3}2_{4n+4m+5}\cdots2_{4n+4m+2l+1})\\
&(1_{4n+4m+2l+4} 1_{4n+4m+2l+6} \cdots 1_{4n+4m+2l+2k+2}).
\end{align*}

Suppose that $m_{1} \in \chi_{q}(4_{4}\cdots 4_{4n})(\chi_{q}(m_{3})-m_{3})$, then $m=m_{1}m_{2}$ is right negative and hence $m$ is not dominant. Therefor $m_{1} \in \chi_{q}(4_{4}\cdots 4_{4n})m_{3}$. Similarly, if $m_{2}\in \chi_{q}(4_{0}4_{4} \cdots4_{4n-4})(\chi_{q}(m_{4})-m_{4})$, then $m=m_{1}m_{2}$ is right negative and hence $m$ is not dominant. This contradicts our assumption. Therefore $m_{2}\in \chi_{q}(4_{0}4_{4} \cdots4_{4n-4})m_{4}$.

Suppose that $m_{1}\in \mathscr{M}(L(m_{1}'))\cap \mathscr{M}(\chi_{q}(4_{4}\cdots 4_{4n-4})(\chi_{q}(4_{4n})-4_{4n})m_{3})$. By the Frenkel-Mukhin algorithm for $L(m_{1}')$, $m_{1}$ must have the factor $4_{4n+4}^{-1}$. But by the Frenkel-Mukhin algorithm and the fact that $m_{2}\in \chi_{q}(4_{0}4_{4} \cdots
4_{4n-4})m_{4}$, $m_{2}$ does not have the factor $4_{4n+4}$. Therefore $m_{1}m_{2}$ is not dominant. Hence $m_{1}\in \chi_q(4_{4}\cdots 4_{4n-4})4_{4n}m_{3}$. It follows that $m_{1}=m_{1}'$.

By the Frenkel-Mukhin algorithm and the fact that $m_{2}\in \chi_{q}(4_{0}4_{4} \cdots4_{4n-4})m_{4}$, $m_{2}$ must be one of the following monomials,

\begin{align*}
& v_{1} =m_{2}'A_{4, 4n-2}^{-1}=4_{0}4_{4}\cdots 4_{4n-8}4_{4n}^{-1}3_{4n-2}m_{4},\\
& v_{2} =m_{2}'A_{4, 4n-2}^{-1}A_{4, 4n-6}^{-1}=4_{0}4_{4}\cdots 4_{4n-12}4_{4n-4}^{-1}4_{4n}^{-1}3_{4n-6}3_{4n-2}m_{4},\\
& \cdots\\
& v_{n} =m_{2}'A_{4, 4n-2}^{-1}A_{4, 4n-6}^{-1} \cdots A_{4, 2}^{-1}=4_{4}^{-1} \cdots 4_{4n-4}^{-1}4_{4n}^{-1}3_{2}\cdots 3_{4n-6}3_{4n-2}m_{4}.
\end{align*}

It follows that the dominant monomials in $\chi_{q}(\widetilde{T}_{n,m-1,l,k}^{(s+4)}) \chi_{q}(\widetilde{T}_{n,m,l,k}^{(s)})$ are

\begin{eqnarray*}
\begin{split}
&M=m_{1}'m_{2}', \ M_{1}=v_{1}m_{1}'=MA_{4, 4n-2}^{-1}, \ M_{2}=v_{2}m_{1}'=M\prod_{i=0}^{1}A_{4, 4n-4i-2}^{-1},\
\ldots, \\
&M_{n-1}=v_{n-1}m_{1}'=M\prod_{i=0}^{n-2}A_{4, 4n-4i-2}^{-1}, \ M_{n}=v_{n}m_{1}'=M\prod_{i=0}^{n-1}A_{4, 4n-4i-2}^{-1}.
\end{split}
\end{eqnarray*}
\end{proof}

\subsection{Proof of Theorem $\ref{M-system of type F4}$}\label{Proof of M system of type F4}

By Lemma $\ref{lemma1}$, the dominant monomials in the $q$-characters of the right hand side and of the left hand side of every equation in Theorem $\ref{M-system of type F4}$ are the same. Therefore the theorem is true.

\section{Proof of Theorem $\ref{irreducible}$}  \label{proof irreducible}

In this section, we prove Theorem \ref{irreducible}.

By Lemma $\ref{lemma1}$, we have known that the modules in the second summand on the right hand side of every equation in Theorem $\ref{M-system of type F4}$ are special, and hence they are simple. Therefore in order to prove Theorem \ref{irreducible}, we only need to show that the modules in the first summand on the right hand side of every equation in Theorem $\ref{M-system of type F4}$ are simple.

\begin{table}[!htbp] \resizebox{.75\width}{.75\height}{
\begin{tabular}{|c|c|c|c|}
\hline %
Equations & Non-highest dominant monomial & $n_{r}$ & Relation \\
\hline %
(\ref{eqn 1})&$\substack{M_{r}, \quad r=1, 2, \ldots, k-1}$
&$\substack{M_{r}A_{1, aq^{-2l-2r+1}}^{-1}A_{2, aq^{-2l-2r+2}}^{-1},\\ r=1, 2, \ldots, k-1}$
&$\substack{n_{r}\in \chi_{q}(M_{r}),\\ n_{r}\not \in \chi_{q}(\mathcal T_{0, 0, l-1, k }^{(-2)})\chi_{q}(\mathcal T_{0, 0, l,k }^{(-2)}), \\ r=1, 2, \ldots, k-1}$\\
\hline%
\ref{eqn 2})&$\substack{M_{r}, \quad r=1, 2, \ldots, n-1}$
&$\substack{M_{r}A_{4, aq^{s+4n-4r+2}}^{-1}A_{3, aq^{s+4n-4r+4}}^{-1}, \\ r=1, 2, \ldots, n-1}$
&$\substack{n_{r}\in \chi_{q}(M_{r}),\\ n_{r}\not \in \chi_{q}(\widetilde{\mathcal{T}}_{n, m-1, 0, 0}^{(s+4)})\chi_{q}(\widetilde{\mathcal{T}}_{n, m, 0, 0}^{(s)}), \\r=1, 2, \ldots, n-1}$\\
\hline%
(\ref{eqn 3a}, \ref{eqn 3b})&$\substack{M_{r}, \quad r=1, 2, \ldots, n-1}$
&$\substack{M_{1}A^{-1}_{4,aq^{s+4n-2}}A^{-1}_{3,aq^{s+4n}}A^{-1}_{2,aq^{s+4n+2}}\\M_{r}A^{-1}_{4, aq^{s+4n-4r+2}}A^{-1}_{3, aq^{s+4n-4r+4}}, \\ r=2, \ldots, n-1}$
&$\substack{n_{r}\in \chi_{q}(M_{r}),\\ n_{r}\not \in \chi_{q}(\widetilde{\mathcal{T}}_{n, 0, l-2, 0}^{(s+4)})\chi_{q}(\widetilde{\mathcal{T}}_{n, 0, l, 0}^{(s)}),\\ r=1, 2, \ldots, n-1}$\\
\hline%
(\ref{eqn 4a}, \ref{eqn 4b})&$\substack{M_{r}, \quad r=1, 2, \ldots, m-1}$
&$\substack{M_{r}A_{3, aq^{s+4m-4r+4}}^{-1}A_{2, aq^{s+4m-4r+6}}^{-1}, \\ r=1, 2, \ldots, m-1}$
&$\substack{n_{r}\in \chi_{q}(M_{r}),\\ n_{r}\not \in \chi_{q}(\widetilde{\mathcal{T}}_{0, m, l-2, 0}^{(s+4)})\chi_{q}(\widetilde{\mathcal{T}}_{0, m, l, 0}^{(s)}), \\ r=1, 2, \ldots, m-1}$\\
\hline%
(\ref{eqn 5})&$\substack{M_{r}, \quad r=1, 2, \ldots, n-1}$
&$\substack{M_{r}A_{4, aq^{s+4n-4r+2}}^{-1}A_{3, aq^{s+4n-4r+4}}^{-1}, \\ r=1, 2, \ldots, n-1}$
&$\substack{n_{r}\in \chi_{q}(M_{r}), \\n_{r}\not \in \chi_{q}(\widetilde{\mathcal{T}}_{n, m-1, l, 0}^{(s+4)})\chi_{q}(\widetilde{\mathcal{T}}_{n, m, l, 0}^{(s)}), \\r=1, 2, \ldots, n-1}$\\
\hline%
(\ref{eqn 6a})& $\substack{M_{r}, \quad r=1, 2, \ldots, n-1}$
&$\substack{M_{1}A_{4, aq^{s+4n-2}}^{-1}A_{3, aq^{s+4n}}^{-1}A_{2, aq^{s+4n+2}}^{-1}A_{1, aq^{s+4n+3}}^{-1},\\
M_{r}A_{4, aq^{s+4n-4r+2}}^{-1}A_{3, aq^{s+4n-4r+4}}^{-1},\\r=2, \ldots, n-1}$
&$\substack{n_{r}\in \chi_{q}(M_{r}), \\n_{r}\not \in \chi_{q}(\widetilde{\mathcal{T}}_{n, 0, 0, k-2}^{(s+4)})\chi_{q}(\widetilde{\mathcal{T}}_{n, 0, 0, k}^{(s)}), \\r=1, 2, \ldots, n-1}$\\
\hline%
(\ref{eqn 7a})&$\substack{M_{r}, \quad r=1, 2, \ldots, m-1}$
&$\substack{M_{1}A_{3, aq^{s+4m}}^{-1}A_{2, aq^{s+4m+2}}^{-1}A_{1, aq^{s+4m+3}}^{-1},\\
M_{r}A_{3, aq^{s+4m-4r+4}}^{-1}A_{2, aq^{s+4m-4r+6}}^{-1},\\r=2, \ldots, m-1}$
&$\substack{n_{r}\in \chi_{q}(M_{r}), \\n_{r}\not \in \chi_{q}(\widetilde{\mathcal{T}}_{0, m, 0, k-2}^{(s+4)})\chi_{q}(\widetilde{\mathcal{T}}_{0, m, 0, k}^{(s)}),\\ r=1, 2, \ldots, m-1}$\\
\hline%
(\ref{eqn 8}, \ref{eqn 9})&$\substack{M_{r}, \quad r=1, 2, \ldots, n-1}$
&$\substack{M_{1}A_{4, aq^{s+4n-2}}^{-1}A_{3, aq^{s+4n}}^{-1}A_{2, aq^{s+4n+2}}^{-1},\\
 M_{r}A_{4, aq^{s+4n-4r+2}}^{-1}A_{3, aq^{s+4n-4r+4}}^{-1}, \\ r= 2, \ldots, n-1}$
&$\substack{n_{r}\in \chi_{q}(M_{r}), \\n_{r}\not \in \chi_{q}(\widetilde{\mathcal{T}}_{n, 0, 0, k-1}^{(s+4)})\chi_{q}(\widetilde{\mathcal{T}}_{n, 0, l, k}^{(s)}), \\r=1, 2, \ldots, n-1 }$\\
\hline%
(\ref{eqn 10}, \ref{eqn 11})&$\substack{M_{r}, \quad r=1, 2, \ldots, m-1}$
&$\substack{M_{r}A_{3, aq^{s+4m-4r+4}}^{-1}A_{2, aq^{s+4m-4r+6}}^{-1},\\ r=1, 2, \ldots, m-1}$
&$\substack{n_{r}\in \chi_{q}(M_{r}), \\ n_{r}\not \in \chi_{q}(\widetilde{\mathcal{T}}_{0, m, 0, k-1}^{(s+4)})\chi_{q}(\widetilde{\mathcal{T}}_{0, m, 1, k}^{(s)}), \\ r=1, 2, \ldots, m-1}$\\
\hline%
(\ref{eqn 12})&$\substack{M_{r}, \quad r=1, 2, \ldots, n-1}$
&$\substack{M_{r}A_{4, aq^{s+4n-4r+2}}^{-1}A_{3, aq^{s+4n-4r+4}}^{-1},\\ r=1, 2, \ldots, n-1}$
&$\substack{n_{r}\in \chi_{q}(M_{r}), \\n_{r}\not \in \chi_{q}(\widetilde{\mathcal{T}}_{n, m-1, 0, k}^{(s+4)})\chi_{q}(\widetilde{\mathcal{T}}_{n, m, 0, k}^{(s)}), \\ r=1, 2, \ldots, n-1}$\\
\hline%
(\ref{eqn 13})&$\substack{M_{r}, \quad r=1, 2, \ldots, n-1}$
&$\substack{M_{r}A_{4, aq^{s+4n-4r+2}}^{-1}A_{3, aq^{s+4n-4r+4}}^{-1},\\ r=1, 2, \ldots, n-1}$
&$\substack{n_{r}\in \chi_{q}(M_{r}), \\n_{r}\not \in \chi_{q}(\widetilde{\mathcal{T}}_{n, m-1, l, k}^{(s+4)})\chi_{q}(\widetilde{\mathcal{T}}_{n, m, l, k}^{(s)}), \\ r=1, 2, \ldots, n-1}$\\
\hline%
(\ref{eqn 14}, \ref{eqn 15}, \ref{eqn 16}) & $\substack{M_{r}, \quad r=1, 2, \ldots, l-1}$
&$\substack{M_{r} A_{2, aq^{s+2l+4-2r}}^{-1} A_{1, aq^{s+2l+5-2r}}^{-1},\\ r=1, 2, \ldots, l-1}$
&$\substack{n_{r}\in \chi_{q}(M_{r}), \\n_{r}\not \in \chi_{q}(\widetilde{\mathcal{P}}^{(s+2)}_{0,0,l,k-1})
\chi_{q}(\widetilde{\mathcal{P}}^{(s)}_{0,0,l,k}), \\ r=1, 2, \ldots, l-1}$\\
\hline%
(\ref{eqn 17}, \ref{eqn 18}) & $\substack{M_{r}, \quad r=1, 2, \ldots, m-1}$
&$\substack{M_{1}A_{3, aq^{s+4m}}^{-1}A_{2, aq^{s+4m+2}}^{-1}A_{1, aq^{s+4m+3}}^{-1},\\
M_{r}A_{3, aq^{s+4m-4r+4}}^{-1}A_{2, aq^{s+4m-4r+6}}^{-1},\\r=2, \ldots, m-1}$
&$\substack{n_{r}\in \chi_{q}(M_{r}), \\n_{r}\not \in \chi_{q}(\widetilde{\mathcal{P}}^{(s+4)}_{0,m,0,k-2})
\chi_{q}(\widetilde{\mathcal{P}}^{(s)}_{0,m,0,k}),\\ r=1, 2, \ldots, m-1}$\\
\hline
(\ref{eqn 19})& $\substack{M_{r}, \quad r=1, 2, \ldots, n-1}$
& $\substack{M_{1}A_{4, aq^{s+4n-2}}^{-1}A_{3, aq^{s+4n}}^{-1}A_{2, aq^{s+4n+2}}^{-1}A_{1, aq^{s+4n+3}}^{-1},\\
M_{r}A_{4, aq^{s+4n-4r+2}}^{-1}A_{3, aq^{s+4n-4r+4}}^{-1},\\r=2, \ldots, n-1}$
& $\substack{n_{r}\in \chi_{q}(M_{r}), \\n_{r}\not \in \chi_{q}(\widetilde{\mathcal{T}}_{n, 0, 0, k-2}^{(s+4)})\chi_{q}(\widetilde{\mathcal{T}}_{n, 0, 0, k}^{(s)}), \\r=1, 2, \ldots, n-1}$\\
\hline%
\end{tabular}}
\caption{Irreducible in Theorem \ref{M-system of type F4}.}
\label{irreducible in the system of type F_{4}}
\end{table}

\subsection{Proof of Theorem \ref{irreducible}}

We will prove the case of $\chi_{q}(\mathcal{\widetilde{T}}_{n-1,m,l,k}^{(s+4)}) \chi_{q}(\mathcal{\widetilde{T}}_{n+1,m-1,l,k}^{(s)})$. The other cases are similar. By definition, we have

\begin{gather}
\begin{align*}
\widetilde{T}_{n, m-1, l, k}^{(s+4)}=&(4_{s+4}4_{4+8}\cdots 4_{s+4n})(3_{s+4n+6}3_{s+4n+10} \cdots 3_{s+4n+4m-2})
(2_{s+4n+4m+3} 2_{s+4n+4m+5}\cdots \\
&2_{s+4n+4m+2l+1})(1_{s+4n+4m+2l+4} 1_{s+4n+4m+2l+6} \cdots 1_{s+4n+4m+2l+2k+2}),\\
\widetilde{T}_{n, m, l, k}^{(s)}=&(4_{s}4_{s+4} \cdots4_{s+4n-4})(3_{s+4n+2}3_{s+4n+6}\cdots3_{s+4n+4m-2})(2_{s+4n+4m+3}2_{s+4n+4m+5}\cdots \\
&2_{s+4n+4m+2l+1})(1_{s+4n+4m+2l+4} 1_{s+4n+4m+2l+6} \cdots 1_{s+4n+4m+2l+2k+2}).
\end{align*}
\end{gather}

By Lemma \ref{lemma1}, the dominant monomials of $\chi_{q}(\mathcal{\widetilde{T}}_{n-1,m,l,k}^{(s+4)}) \chi_{q}(\mathcal{\widetilde{T}}_{n+1,m-1,l,k}^{(s)})$ are

\begin{align*}
M_{r}=M \prod _{i=0}^{r-1}A^{-1}_{4,aq^{s+4n-2-4i}}, 0 \leq r \leq n-1, \text { where } M=\widetilde{T}_{n-1,m,l,k}^{(s+4)}\widetilde{T}_{n+1,m-1,l,k}^{(s)}.
\end{align*}

We need to show that $\chi_q(M_{r})\not \subseteq \chi_q(M)$ for $1 \leq r \leq n-1$. We will prove the case of $r=1$, the other cases are similar.

\begin{align*}
M_{1}=&MA_{4, aq^{s+4n-2}}^{-1}=M4_{s+4n-4}^{-1}4_{s+4n}^{-1}3_{s+4n-2}.
\end{align*}

By $U_{q^{2}}(\widehat {\mathfrak sl}_{2})$ argument, it is clear that $n_{1}=M_{1}A_{4, aq^{s+4n-2}}^{-1}A_{3, aq^{s+4n}}^{-1}$ is in $\chi_{q}(M_{1})$.

If $n_{1}$ is in $\chi_{q}(\widetilde{T}_{n, m-1, l, k}^{(s+4)})\chi_{q}(\widetilde{T}_{n, m, l, k}^{(s)})$, then $\widetilde{T}_{n, m-1, l, k}^{(s+4)}A_{4, aq^{s+4n-2}}^{-1}A_{3, aq^{s+4n}}^{-1}$ is in $\chi_{q}(\widetilde{T}_{n, m-1, l, k}^{(s+4)})$ which is impossible by the Frenkel-Mukhin algorithm for $\widetilde{T}_{n, m-1, l, k}^{(s+4)}$. Hence $\chi_q(M_{1})\not \subseteq \chi_q(M)$.

\section{Conjectural equations satisfied by the $q$-characters of other minimal affinizations in type $F_4$} \label{conjectural equations for F_4}

In this section, we give some conjectural equations satisfied by the $q$-characters of the minimal affinizations in type $F_4$ which are not in Theorem \ref{M-system of type F4} and Theorem \ref{The dual M-system}. In order to study equations satisfied by $q$-characters, we introduce the concept of dominant monomial graphs for a tensor product of simple $U_q \widehat{\mathfrak{g}}$-modules.

\subsection{Conjecture about the minimal affinizations which are not in Theorem \ref{M-system of type F4} and Theorem \ref{The dual M-system}}

Let $l, m, n \in \mathbb{Z}_{\geq 1}$, $k\in \mathbb{Z}_{\geq 3}$, $s\in \mathbb{Z}$. We define
\begin{align*}
%\widetilde{S}_{n,0,0,k}^{(s)}&=\widetilde{T}_{0,n,0,k}^{(s)}\widetilde{T}_{0,0,0,k-2}^{(s+4n+4)},\\
\widetilde{S}_{0,m,0,k}^{(s)}&=\widetilde{T}_{0,0,2m,k}^{(s)}\widetilde{T}_{m,0,0,k-2}^{(s+4)},\\
\widetilde{S}_{n,0,l,k}^{(s)}&=\left\{
                                              \begin{array}{ll}
                                              \widetilde{T}_{0,n,1,k}^{(s)}\widetilde{T}_{0,0,0,k-1}^{(s+4n+4)}, & \hbox{$l=1,$} \\
                                              \widetilde{T}_{0,n,l,k}^{(s)}\widetilde{T}_{0,0,l-2,k}^{(s+4n+4)}, & \hbox{$l\geq 2,$}
                                              \end{array}
                                              \right.                    \\
\widetilde{S}_{0,m,l,k}^{(s)}&=\left\{
                                              \begin{array}{ll}
                                              \widetilde{T}_{0,0,1+2m,k}^{(s)}\widetilde{T}_{m,0,0,k-1}^{(s+4)}, & \hbox{$l=1,$} \\
                                              \widetilde{T}_{0,0,l+2m,k}^{(s)}\widetilde{T}_{m,0,l-2,k}^{(s+4)}, & \hbox{$l\geq 2,$}
                                              \end{array}
                                              \right.              \\
\widetilde{S}_{n,m,0,k}^{(s)}&=\widetilde{T}_{0,n+m,0,k}^{(s)}\widetilde{T}_{0,m-1,0,k}^{(s+4n+4)},\\
\widetilde{S}_{n,m,l,k}^{(s)}&=\widetilde{T}_{0,n+m,l,k}^{(s)}\widetilde{T}_{0,m-1,l,k}^{(s+4n+4)}.
\end{align*}

We use $\widetilde{\mathcal{S}}^{(s)}_{n,m,l,k}$ to denote the simple $U_q \widehat{\mathfrak{g}}$-module with the highest weight monomial $\widetilde{S}^{(s)}_{n,m,l,k}$.

\begin{conjecture} \label{the conjecture 1}
For $s \in \mathbb{Z}$, $n, m, l \in \mathbb{Z}_{\geq 1}$, $k\in \mathbb{Z}_{\geq 3}$, we have the following equations in $\rep(U_q \widehat{\mathfrak{g}})$.

\begin{align}\label{eqn 49}
[\mathcal{\widetilde{T}}_{0,m,0,k-2}^{(s+4)}][\mathcal{\widetilde{T}}_{0,m,0,k}^{(s)}]&=[\mathcal{\widetilde{T}}_{0,m-1,0,k}^{(s+4)}][\mathcal{\widetilde{T}}_{0,m+1,0,k-2}^{(s)}]+[\mathcal{\widetilde{S}}_{0,m,0,k}^{(s)}],
\end{align}

\begin{align}\label{eqn 54}
[\mathcal{\widetilde{T}}_{n,m-1,0,k}^{(s+4)}][\mathcal{\widetilde{T}}_{n,m,0,k}^{(s)}]&=[\mathcal{\widetilde{T}}_{n-1,m,0,k}^{(s+4)}][\mathcal{\widetilde{T}}_{n+1,m-1,0,k}^{(s)}]+[\mathcal{\widetilde{S}}_{n,m,0,k}^{(s)}],
\end{align}

\begin{align}\label{eqn 55}
[\mathcal{\widetilde{T}}_{n,m-1,l,k}^{(s+4)}][\mathcal{\widetilde{T}}_{n,m,l,k}^{(s)}]&=[\mathcal{\widetilde{T}}_{n-1,m,l,k}^{(s+4)}][\mathcal{\widetilde{T}}_{n+1,m-1,l,k}^{(s)}]+[\mathcal{\widetilde{S}}_{n,m,l,k}^{(s)}],
\end{align}

\begin{align}\label{eqn 56a}
[\mathcal{\widetilde{T}}^{(s+4)}_{n,0,0,k-1}][\mathcal{\widetilde{T}}^{(s)}_{n,0,1,k}]&=[\mathcal{\widetilde{T}}^{(s+4)}_{n-1,0,1,k}][\mathcal{\widetilde{T}}^{(s)}_{n+1,0,0,k-1}]+[\mathcal{\widetilde{S}}^{(s)}_{n,0,1,k}], \quad  l=1,
\end{align}

\begin{align}\label{eqn 56b}
[\mathcal{\widetilde{T}}^{(s+4)}_{n,0,l-2,k}][\mathcal{\widetilde{T}}^{(s)}_{n,0,l,k}]&=[\mathcal{\widetilde{T}}^{(s+4)}_{n-1,0,l,k}][\mathcal{\widetilde{T}}^{(s)}_{n+1,0,l-2,k}]+[\mathcal{\widetilde{S}}^{(s)}_{n,0,l,k}], \quad l\geq2,
\end{align}

\begin{align}\label{eqn 57a}
[\mathcal{\widetilde{T}}_{0,m,0,k-1}^{(s+4)}][\mathcal{\widetilde{T}}_{0,m,1,k}^{(s)}]=[\mathcal{\widetilde{T}}_{0,m-1,1,k}^{(s+4)}][\mathcal{\widetilde{T}}_{0,m+1,0,k-1}^{(s)}]+[\mathcal{\widetilde{S}}^{(s)}_{0,m,1,k}],
\quad l=1,
\end{align}

\begin{align}\label{eqn 57b}
[\mathcal{\widetilde{T}}_{0,m,l-2,k}^{(s+4)}][\mathcal{\widetilde{T}}_{0,m,l,k}^{(s)}]=[\mathcal{\widetilde{T}}_{0,m-1,l,k}^{(s+4)}][\mathcal{\widetilde{T}}_{0,m+1,l-2,k}^{(s)}]+[\mathcal{\widetilde{S}}^{(s)}_{0,m,l,k}],
\quad l\geq2.
\end{align}
\end{conjecture}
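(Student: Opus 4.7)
\medskip

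\noindent\textbf{Proof proposal.} The plan is to adapt the strategy used for Theorem \ref{M-system of type F4} in Sections \ref{proof main1} and \ref{proof irreducible}, with the key complication that the modules $\mathcal{\widetilde{T}}^{(s)}_{n,m,l,k}$ appearing on the left hand side for $k\geq 3$ are no longer covered by Theorem \ref{special} and may fail to be special or anti-special. The overall structure of the argument will be: (i) classify the dominant monomials in $\mathscr{M}(\chi_q(\mathcal{\widetilde{T}}^{(s+4)}_{\ast})\chi_q(\mathcal{\widetilde{T}}^{(s)}_{\ast}))$ on the left hand side; (ii) show that every such dominant monomial occurs as a highest $l$-weight of a summand on the right hand side, with multiplicities matched by applying Lemma \ref{well-known}; (iii) verify simplicity of the new modules $\mathcal{\widetilde{S}}^{(s)}_{n,m,l,k}$. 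Without loss of generality I may set $s=0$, and as in Lemma \ref{lemma1} reduce to a factorization $m_+=m_1'm_2'$ where one factor is a product of Kirillov--Reshetikhin modules whose $q$-characters are accessible through the Frenkel--Mukhin algorithm.

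For step (i), I would argue that a dominant monomial $m=m_1m_2$ in the product must have $m_2$ equal to the highest $l$-weight of $\mathcal{\widetilde{T}}^{(s)}_{n,m,l,k}$ on a suitable truncation, using the right-negative property as in Section \ref{proof of special}: if $m_2$ differs from the highest weight by any $A^{-1}_{j,b}$ whose index $b$ lies beyond a prescribed right boundary, the whole monomial becomes right-negative and hence non-dominant. Within this truncation, however, $\mathcal{\widetilde{T}}^{(s)}_{n,m,l,k}$ for $k\geq 3$ is expected to still be special (this is the content of Theorem \ref{special} for the related $\widetilde{P}$-modules), so the truncated $q$-character is computable by the Frenkel--Mukhin algorithm, and the standard contamination analysis of negative factors $4^{-1}_{\ast}, 3^{-1}_{\ast}, 2^{-1}_{\ast}, 1^{-1}_{\ast}$ pins down the candidate dominant monomials, organizing them into a ladder $M_r = M\prod_{i=0}^{r-1}A^{-1}_{4,aq^{s+4n-2-4i}}$ exactly parallel to Lemma \ref{lemma1}.

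For step (ii), I would show that the highest monomial $M_0=M$ is accounted for by $\mathcal{\widetilde{T}}^{(s+4)}_{n-1,m,l,k}\otimes \mathcal{\widetilde{T}}^{(s)}_{n+1,m-1,l,k}$, that the ladder $M_1,\ldots,M_{n-1}$ is accounted for by the same tensor product via Frenkel--Mukhin, and that the terminal dominant monomial $M_n$ factors as $\widetilde{S}^{(s)}_{n,m,l,k}$, which is the highest $l$-weight of the new summand $\mathcal{\widetilde{S}}^{(s)}_{n,m,l,k}$. This is where the \emph{dominant monomial graph} introduced in Section \ref{conjectural equations for F_4} becomes the organizing tool: the graph on the dominant monomials of the left hand side splits into two connected components, one attached to each summand on the right, and the matching of vertices to summands furnishes the combinatorial input needed to conclude the equation in $\rep(U_q\widehat{\mathfrak{g}})$. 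Step (iii), the simplicity of $\mathcal{\widetilde{S}}^{(s)}_{n,m,l,k}$, will be obtained either by exhibiting it as a Hernandez--Leclair type cluster variable (using the mirror of the analysis in Section \ref{Relation between the M-systems and cluster algebras}), or directly by showing that all non-highest dominant monomials one would produce via $A^{-1}$-mutation yield monomials not in $\chi_q$ of either tensor factor, in the spirit of Table \ref{irreducible in the system of type F_{4}}.

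The main obstacle I expect is step (i) in the regime where neither factor on the left hand side is special and neither is anti-special simultaneously, so that neither the Frenkel--Mukhin algorithm nor its dual applies cleanly to the whole $q$-character. The control of non-highest dominant monomials then has to come entirely from the truncation argument together with the dominant monomial graph, and one must rule out the appearance of ``extra'' dominant monomials coming from higher $A^{-1}_{4,\ast}$-descendants interacting with the $1$-string at the tail of $m_+$. Resolving this likely requires either strengthening Theorem \ref{truncated} with a larger truncation set $U$ adapted to the affinized part and the tail, or a careful induction on $k$ using Equations (\ref{eqn 14})--(\ref{eqn 19}) of Theorem \ref{M-system of type F4} to bootstrap the cases $k=3,4$ to all $k\geq 3$; the latter route is consistent with the way $\widetilde{P}^{(s)}_{0,0,l,k}$ and $\widetilde{P}^{(s)}_{0,m,0,k}$ were introduced in Section \ref{main results}.
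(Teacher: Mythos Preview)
The statement you are trying to prove is stated in the paper as a \emph{conjecture}, not a theorem: the paper offers no proof and explicitly explains why. In Section~\ref{conjectural equations for F_4} the authors introduce the dominant monomial graph as an organizing tool, compute these graphs for several small explicit instances (Figures~\ref{The Graph of dominant 1}--\ref{The Graph of dominant 11}), and observe empirically that each graph splits into two pieces matching the two summands on the right hand side. But they then write: ``These graphs are also conjectural, since we are not able to show that the Frenkel--Mukhin algorithm works for the modules which are not special. If we can show that these graphs are indeed the dominant monomial graphs for the corresponding modules, then the corresponding conjectural equations are true.'' So the paper's ``proof'' consists only of experimental evidence, not an argument.

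Your proposal does not close this gap. In step~(i) you assert that within a suitable truncation the module $\mathcal{\widetilde{T}}^{(s)}_{n,m,l,k}$ for $k\geq 3$ ``is expected to still be special (this is the content of Theorem~\ref{special} for the related $\widetilde{P}$-modules)''. This is precisely the unproved point: Theorem~\ref{special} covers $\widetilde{P}^{(s)}_{0,0,l,k}$ and $\widetilde{P}^{(s)}_{0,m,0,k}$, but those are \emph{different} modules (with extra Kirillov--Reshetikhin factors inserted by design to restore specialness), not the minimal affinizations $\mathcal{\widetilde{T}}^{(s)}_{n,m,l,k}$ themselves. For general $k\geq 3$ the modules on the left hand side of Conjecture~\ref{the conjecture 1} are known to be neither special nor anti-special (this is exactly why they fall outside Theorems~\ref{M-system of type F4} and~\ref{The dual M-system}), and no truncation argument in the paper establishes specialness on any truncation large enough to capture all dominant monomials. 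Your step~(ii) then relies on the dominant monomial graph splitting into two components, but as the paper notes, the graphs themselves are only conjecturally correct for the same reason. The ``main obstacle'' you flag at the end is therefore not a technical nuisance but the entire content of the conjecture; neither of your two suggested remedies (enlarging $U$ in Theorem~\ref{truncated}, or inducting on $k$ via Equations~(\ref{eqn 14})--(\ref{eqn 19})) is carried out, and the paper gives no indication that either would succeed.
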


\begin{example}\label{example 1}
The following are some equations of Equation (\ref{eqn 49}) in Conjecture \ref{the conjecture 1}.
{\tiny \begin{align}
&[1_{0}3_{-6}][1_{-4}1_{-2}1_{0}3_{-10}]=[1_{-4}1_{-2}1_{0}][1_{0}3_{-10}3_{-6}]+[1_{-4}1_{-2}1^{2}_{0}2_{-9}2_{-7}4_{-8}],
\label{eqn 70}\\
&[1_{0}3_{-10}3_{-6}][1_{-4}1_{-2}1_{0}3_{-14}3_{-10}]=[1_{-4}1_{-2}1_{0}3_{-10}][1_{0}3_{-14}3_{-10}3_{-6}]+[1_{-4}1_{-2}1^{2}_{0}2_{-13}2_{-11}2_{-9}2_{-7}4_{-12}4_{-8}],
\label{eqn 71}\\
&[1_{-4}1_{-2}1_{0}3_{-10}][1_{-8}1_{-6}1_{-4}1_{-2}1_{0}3_{-14}]=[1_{-8}1_{-6}1_{-4}1_{-2}1_{0}][1_{-4}1_{-2}1_{0}3_{-14}3_{-10}]+[1_{-8}1_{-6}1^{2}_{-4}1^{2}_{-2}1^{2}_{0}2_{-13}2_{-11}4_{-12}].
\label{eqn 72}
\end{align}}
\end{example}

\begin{example}\label{example 2}
The following are some equations of Equation (\ref{eqn 56a}) and  Equation (\ref{eqn 56b}) in Conjecture \ref{the conjecture 1}.

{\tiny
\begin{align}
[1_{-2}1_{0}4_{-10}][1_{-4}1_{-2}1_{0}2_{-7}4_{-14}] & =[1_{-4}1_{-2}1_{0}2_{-7}][1_{-2}1_{0}4_{-14}4_{-10}]+[1_{-4}1^{2}_{-2}
1^{2}_{0}2_{-7}3_{-12}], \label{eqn 73}\\
[1_{-2}1_{0}4_{-14}4_{-10}][1_{-4}1_{-2}1_{0}2_{-7}4_{-18}4_{-14}] & =[1_{-4}1_{-2}1_{0}2_{-7}4_{-14}][1_{-2}1_{0}4_{-18}4_{-14}
4_{-10}]+[1_{-4}1^{2}_{-2}1^{2}_{0}2_{-7}3_{-16}3_{-12}], \label{eqn 74}\\
[1_{-4}1_{-2}1_{0}4_{-12}][1_{-4}1_{-2}1_{0}2_{-9}2_{-7}4_{-16}] & =[1_{-4}1_{-2}1_{0}2_{-9}2_{-7}][1_{-4}1_{-2}1_{0}4_{-16}
4_{-12}]+[1^{2}_{-4}1^{2}_{-2}1^{2}_{0}2_{-9}2_{-7}3_{-14}], \label{eqn 75}\\
[1_{-4}1_{-2}1_{0}4_{-16}4_{-12}][1_{-4}1_{-2}1_{0}2_{-9}2_{-7}4_{-20}4_{-16}] & =[1_{-4}1_{-2}1_{0}2_{-9}2_{-7}4_{-16}][1_{-4}
1_{-2}1_{0}4_{-20}4_{-16}4_{-12}] \nonumber \\
& \quad +[1^{2}_{-4}1^{2}_{-2}1^{2}_{0}2_{-9}2_{-7}3_{-18}3_{-14}], \label{eqn 76}\\
[1_{-4}1_{-2}1_{0}2_{-7}4_{-14}][1_{-4}1_{-2}1_{0}2_{-11}2_{-9}2_{-7}4_{-18}] & =[1_{-4}1_{-2}1_{0}2_{-11}2_{-9}2_{-7}][1_{-4}
1_{-2}1_{0}2_{-7}4_{-18}4_{-14}]\nonumber\\
& \quad +[1^{2}_{-4}1^{2}_{-2}1^{2}_{0}2_{-11}2_{-9}2^{2}_{-7}3_{-16}], \label{eqn 77}\\
[1_{-4}1_{-2}1_{0}2_{-7}4_{-18}4_{-14}][1_{-4}1_{-2}1_{0}2_{-11}2_{-9}2_{-7}4_{-22}4_{-18}] & =[1_{-4}1_{-2}1_{0}2_{-11}2_{-9}
2_{-7}4_{-18}][1_{-4}1_{-2}1_{0}2_{-7}4_{-22}4_{-18}4_{-14}]\nonumber \\
& \quad +[1^{2}_{-4}1^{2}_{-2} 1^{2}_{0}2_{-11}2_{-9}2^{2}_{-7}3_{-20}3_{-16}],\label{eqn 78}\\
[1_{-4}1_{-2}1_{0}2_{-9}2_{-7}4_{-16}][1_{-4}1_{-2}1_{0}2_{-13}2_{-11}2_{-9}2_{-7}4_{-20}] & =[1_{-4}1_{-2}1_{0}2_{-13}2_{-11}
2_{-9}2_{-7}][1_{-4}1_{-2}1_{0}2_{-9}2_{-7}4_{-16}]\nonumber\\
& \quad +[1^{2}_{-4}1^{2}_{-2}1^{2}_{0}2_{-13}2_{-11}2^{2}_{-9}2^{2}_{-7}3_{-18}].\label{eqn 79}
\end{align}
}

\end{example}

\subsection{Dominant monomial graphs}

In order to study equations satisfied by $q$-characters, we introduce dominant monomial graphs for a tensor product of simple $U_q \widehat{\mathfrak{g}}$-modules.

\begin{definition}\label{Dominant monomial graphs}
Let $\mathcal{T} = \mathcal{T}_1 \otimes \cdots \otimes \mathcal{T}_k$ be a tensor product of simple $U_q \widehat{\mathfrak{g}}$-modules. We define the dominant monomial graph $G(\mathcal{T})$ for $\mathcal{T}$ as follows. The vertices of $G(\mathcal{T})$ are dominant monomials in $\chi_q(\mathcal{T}) = \chi_q(\mathcal{T}_1) \cdots \chi_q(\mathcal{T}_k)$. For two vertices $v_1, v_2$ in $G(\mathcal{T})$, there is an arrow from $v_1$ to $v_2$ if and only if $v_2 < v_1$.
\end{definition}

Let $G$ be a dominant monomial graph. Suppose that $a, b$ are two vertices in $G$ and $b<a$. Then $b = m a$ for some $m \in \mathcal{Q}^{-}$. We draw
$$\begin{xy}
(50,0)*+{a}="1";(70,0)*+{b}="2";%
{\ar@{->}|{m} "1";"2"};%
\end{xy}$$
when we draw the graph $G$.

In the following, we draw the dominant monomial graphs for the modules in the equivalence classes on the left hand side of the equations in Examples \ref{example 1} and \ref{example 2}. Figure \ref{The Graph of dominant 1} -- Figure \ref{The Graph of dominant 11} correspond Equation (\ref{eqn 70}) -- Equation (\ref{eqn 79}) respectively.

In all examples of dominant monomial graphs, we find that every graph can be divided into two parts. The vertices in the first (resp. second) part of the graph are dominant monomials in the first  (resp. second) summand of the right hand side of the correpsonding equation. For example, in Figure \ref{The Graph of dominant 2}, the monomials $M$, $a_2$, $a_3$, $a_4$ (resp. $a_5$, $a_6$, $a_7$, $a_8$, $a_9$, $a_{10}$, $a_{11}$) are the dominant monomials of the first (resp. second) summand of the right hand side of Equation (\ref{eqn 71}).

These graphs are also conjectural, since we are not able to show that the Frenkel-Mukhin algorithm works for the modules which are not special. If we can show that these graphs are indeed the dominant monomial graphs for the corresponding modules, then the corresponding conjectural equations are true.

For $k\in\mathbb{Z}$, let
\begin{align*}
N^{(1)}_{k}&=A^{-1}_{4,k}A^{-1}_{3,k+2}A^{-1}_{2,k+4}A^{-1}_{1,k+5},\\
N^{(2)}_{k}&=A^{-1}_{2, k}A^{-1}_{3, k+2}A^{-1}_{2, k+4}A^{-1}_{1, k+5},\\
N^{(3)}_{k}&=A^{-1}_{4,k-2}A^{-1}_{3,k},\\
N^{(4)}_{k}&=A^{-1}_{2,k-2}A^{-1}_{3,k}A^{-1}_{4,k-2},\\
M^{(1)}_{k}&=A^{-1}_{2,k-2}A^{-1}_{3,k}A^{-1}_{4,k+2},
\end{align*}
where $A_{i,k}=A_{i,aq^{k}}$ which is defined in \ref{eqution a}.

We have the following relations
\begin{align*}
A_{2,k}N^{(2)}_{k}=A_{4,k}N^{(1)}_{k}, \quad A_{2,k+2}A_{1,k+3}N^{(1)}_{k-2}=N^{(3)}_{k},\\
N^{(3)}_{k}=A_{2,k-2}N^{(4)}_{k},  \quad   A_{4,k-2}N^{(4)}_{k}=A_{4,k+2}M^{(1)}_{k}.
\end{align*}

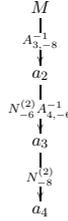
\begin{figure}[H]
\resizebox{.6\width}{.6\height}{
$$\begin{xy}
(50,0)*+{M}="1";(50,-15)*+{a_2}="2";%
(50,-30)*+{a_3}="3";(50,-45)*+{a_4}="4";%
{\ar@{->}|{A^{-1}_{3,-8}} "1";"2"};{\ar@{->}|{N^{(2)}_{-6}A^{-1}_{4,-6}} "2";"3"};
{\ar@{->}|{N^{(2)}_{-8}} "3";"4"};%
\end{xy}$$}
\caption{The dominant monomial graph for $L(m_{1})\bigotimes L(m_{2})$ ($M=m_{1}m_{2}$ where $m_{1}=1_{0}3_{-6}, m_{2}=1_{-4}1_{-2}1_{0}3_{-10})$}.\label{The Graph of dominant 1}
\end{figure}

\begin{figure}[H]
\resizebox{.6\width}{.6\height}{
$$\begin{xy}
(50,0)*+{M}="1";(80,0)*+{a_2}="2";(110,0)*+{a_3}="3";(140,0)*+{a_4}="4";%
(80,-20)*+{a_5}="5";(110,-20)*+{a_6}="6";(140,-20)*+{a_7}="7";(170,-40)*+{a_{10}}="10";
(200,-40)*+{a_{11}}="11";%
(110,-40)*+{a_8}="8";(140,-40)*+{a_9}="9";%
{\ar@{->}|{A^{-1}_{3,-8}} "1";"2"};{\ar@{->}|{N^{(2)}_{-6}A^{-1}_{4,-6}} "2";"3"};{\ar@{->}|{N^{(2)}_{-8}} "3";"4"};%
{\ar@{->}|{N^{(2)}_{-6}A^{-1}_{4,-6}} "5";"6"};{\ar@{->}|{N^{(2)}_{-8}} "6";"7"};{\ar@{->}|{N^{(4)}_{-8}} "7";"9"};
{\ar@{->}|{A^{-1}_{2,-6}A^{-1}_{1,-5}} "9";"10"};{\ar@{->}|{N^{(2)}_{-12}} "10";"11"};
%{\ar@{->}|{N^{(2)}_{-10}A^{-1}_{4,-10}} "7";"10"};%
{\ar@{->}|{A^{-1}_{3,-12}} "2";"5"};{\ar@{->}|{A^{-1}_{3,-12}} "3";"6"};{\ar@{->}|{A^{-1}_{3,-12}} "4";"7"};
{\ar@{->}|{N^{(4)}_{-8}A^{-1}_{2,-8}} "6";"8"};
{\ar@{->}|{N^{(2)}_{-8}A_{2,-8}} "8";"9"};
\end{xy}$$}
\caption{The dominant monomial graph for $L(m_{1})\bigotimes L(m_{2})$ ($M=m_{1}m_{2}$ where $m_{1}=1_{0}3_{-10}3_{-6}, m_{2}=1_{-4}1_{-2}1_{0}3_{-14}3_{-10})$.}\label{The Graph of dominant 2}
\end{figure}

\begin{figure}[H]
\resizebox{.6\width}{.6\height}{
$$\begin{xy}
(45,0)*+{M}="1";(80,0)*+{a_2}="2";(110,0)*+{a_3}="3";(150,0)*+{a_4}="4";(170,0)*+{a_5}="5";%
(45,-20)*+{a_6}="6";(80,-20)*+{a_7}="7";(110,-20)*+{a_8}="8";(110,-40)*+{a_{9}}="9";(170,-40)*+{a_{10}}="10";%
(45,-40)*+{a_{11}}="11";(80,-40)*+{a_{12}}="12";%
(45,-60)*+{a_{13}}="13";(80,-60)*+{a_{14}}="14";%
{\ar@{->}|{A^{-1}_{3,-12}} "1";"6"};{\ar@{->}|{N^{(2)}_{-10}A^{-1}_{4,-10}} "6";"11"};{\ar@{->}|{N^{(2)}_{-12}} "11";"13"};%
{\ar@{->}|{N^{(2)}_{-6}A^{-1}_{3,-8}A^{-1}_{4,-6}} "1";"2"};{\ar@{->}|{N^{(2)}_{-8}} "2";"3"};
{\ar@{->}|{N^{(2)}_{-10}A^{-1}_{3,-12}A^{-1}_{4,-10}} "3";"4"};{\ar@{->}|{N^{(2)}_{-12}} "4";"5"};
{\ar@{->}|{A^{-1}_{3,-12}A^{-1}_{2,-10}} "2";"7"};{\ar@{->}|{N^{(1)}_{-10}} "7";"12"};
{\ar@{->}|{N^{(2)}_{-12}} "12";"14"};%
{\ar@{->}|{A^{-1}_{3,-12}A^{-1}_{2,-10}M^{(1)}_{-10}} "3";"8"};
{\ar@{->}|{M^{(1)}_{-10}} "4";"9"};{\ar@{->}|{A^{-1}_{4,-8}} "5";"10"};
{\ar@{->}|{N^{(1)}_{-10}} "8";"9"};
{\ar@{->}|{A^{-1}_{2,-8}A^{-1}_{1,-7}} "9";"10"};%
{\ar@{->}|{N^{(2)}_{-6}M^{(1)}_{-8}} "6";"7"};{\ar@{->}|{N^{(2)}_{-6}A^{-1}_{3,-8}A^{-1}_{4,-6}} "11";"12"};
{\ar@{->}|{N^{(2)}_{-6}A^{-1}_{3,-8}A^{-1}_{4,-6}}  "13";"14"};%
{\ar@{->}|{N^{(2)}_{-8}M^{(1)}_{-10}} "7";"8"};{\ar@{->}|{N^{(2)}_{-8}M^{(1)}_{-10}} "12";"9"};%
\end{xy}$$}
\caption{The dominant monomial graph for $L(m_{1})\bigotimes L(m_{2})$ ($M=m_{1}m_{2}$ where $m_{1}=1_{-4}1_{-2}1_{0}3_{-10}, m_{2}=1_{-8}1_{-6}1_{-4}1_{-2}1_{0}3_{-14}$).}\label{The Graph of dominant 3}
\end{figure}

\begin{figure}[H]
\resizebox{.6\width}{.6\height}{
$$\begin{xy}
(50,0)*+{M}="1";(70,0)*+{a_2}="2";%
(50,-20)*+{a_3}="3";(70,-20)*+{a_4}="4";%
{\ar@{->}|{N^{(2)}_{-6}} "1";"2"};{\ar@{->}|{A^{-1}_{4,-12}} "1";"3"};
{\ar@{->}|{N^{(2)}_{-6}} "3";"4"};{\ar@{->}|{A^{-1}_{4,-12}} "2";"4"};%
\end{xy}$$}
\caption{The dominant monomial graph for $L(m_{1})\bigotimes L(m_{2})$ ($M=m_{1}m_{2}$ where $m_{1}=1_{-2}1_{0}4_{-10}, m_{2}=1_{-4}1_{-2}1_{0}2_{-7}4_{-14})$.}\label{The Graph of dominant 5}
\end{figure}

\begin{figure}[H]
\resizebox{.6\width}{.6\height}{
$$\begin{xy}
(50,0)*+{M}="1";(70,0)*+{a_2}="2";%
(50,-20)*+{a_3}="3";(70,-20)*+{a_4}="4";%
(50,-40)*+{a_5}="5";(70,-40)*+{a_6}="6";%
{\ar@{->}|{N^{(2)}_{-6}} "1";"2"};{\ar@{->}|{A^{-1}_{4,-12}} "1";"3"};%
{\ar@{->}|{N^{(2)}_{-6}} "3";"4"};{\ar@{->}|{A^{-1}_{4,-12}} "2";"4"};%
{\ar@{->}|{N^{(2)}_{-6}} "5";"6"};{\ar@{->}|{A^{-1}_{4,-16}} "3";"5"};{\ar@{->}|{A^{-1}_{4,-16}} "4";"6"};%
\end{xy}$$}
\caption{The dominant monomial graph for $L(m_{1})\bigotimes L(m_{2})$ ($M=m_{1}m_{2}$ where $m_{1}=1_{-2}1_{0}4_{-14}4_{-10}, m_{2}=1_{-4}1_{-2}1_{0}2_{-7}4_{-18}4_{-14})$.}\label{The Graph of dominant 6}
\end{figure}

\begin{figure}[H]
\resizebox{.6\width}{.6\height}{
$$\begin{xy}
(50,0)*+{M}="1";(70,0)*+{a_2}="2";(90,0)*+{a_3}="3";%
(50,-20)*+{a_4}="4";(70,-20)*+{a_5}="5";(90,-20)*+{a_6}="6";%
{\ar@{->}|{N^{(2)}_{-6}} "1";"2"};{\ar@{->}|{N^{(2)}_{-8}} "2";"3"};
{\ar@{->}|{A^{-1}_{4,-14}} "1";"4"};{\ar@{->}|{A^{-1}_{4,-14}} "2";"5"};
{\ar@{->}|{N^{(2)}_{-6}} "4";"5"};{\ar@{->}|{N^{(2)}_{-8}} "5";"6"};{\ar@{->}|{A^{-1}_{4,-14}} "3";"6"};%
\end{xy}$$}
\caption{The dominant monomial graph for $L(m_{1})\bigotimes L(m_{2})$ ($M=m_{1}m_{2}$ where $m_{1}=1_{-4}1_{-2}1_{0}4_{-12}, m_{1}=1_{-4}1_{-2}1_{0}2_{-9}2_{-7}4_{-16}$).}\label{The Graph of dominant 7}
\end{figure}

\begin{figure}[H]
\resizebox{.6\width}{.6\height}{
$$\begin{xy}
(50,0)*+{M}="1";(70,0)*+{a_2}="2";(90,0)*+{a_3}="3";%
(50,-20)*+{a_4}="4";(70,-20)*+{a_5}="5";(90,-20)*+{a_6}="6";%
(50,-40)*+{a_7}="7";(70,-40)*+{a_8}="8";(90,-40)*+{a_9}="9";%
{\ar@{->}|{N^{(2)}_{-6}} "1";"2"};{\ar@{->}|{N^{(2)}_{-8}} "2";"3"};
{\ar@{->}|{A^{-1}_{4,-14}} "1";"4"};{\ar@{->}|{A^{-1}_{4,-14}} "2";"5"};
{\ar@{->}|{N^{(2)}_{-6}} "4";"5"};{\ar@{->}|{N^{(2)}_{-8}} "5";"6"};{\ar@{->}|{A^{-1}_{4,-14}} "3";"6"};%
{\ar@{->}|{N^{(2)}_{-6}} "7";"8"};{\ar@{->}|{N^{(2)}_{-8}} "8";"9"};
{\ar@{->}|{A^{-1}_{4,-18}} "4";"7"};{\ar@{->}|{A^{-1}_{4,-18}} "5";"8"};{\ar@{->}|{A^{-1}_{4,-18}} "6";"9"};%
\end{xy}$$}
\caption{The dominant monomial graph for $L(m_{1})\bigotimes L(m_{2})$ ($M=m_{1}m_{2}$ where $m_{1}=1_{-4}1_{-2}1_{0}4_{-16}4_{-12}, m_{2}=1_{-4}1_{-2}1_{0}2_{-9}2_{-7}4_{-20}4_{-16})$.}\label{The Graph of dominant 8}
\end{figure}

\begin{figure}[H]
\resizebox{.6\width}{.6\height}{
\begin{xy}
(0,0)*+{M}="1";(0,-30)*+{2a_2}="2";(40,-30)*+{a_3}="8";(0,-60)*+{a_4}="16";(40,-60)*+{a_5}="17";(0,-90)*+{a_6}="21";(40,-90)*+{a_7}="20";%
(10,-20)*+{a_{12}}="5";(10,-50)*+{2a_{13}}="6";(50,-50)*+{a_{14}}="13";(10,-80)*+{a_{15}}="18";(50,-80)*+{a_{16}}="19";(10,-110)*+{a_{17}}="23";(50,-110)*+{a_{18}}="22";%
(-40,-30)*+{a_{8}}="9";(-70,-30)*+{2a_{9}}="3";(-100,-30)*+{2a_{10}}="4";(-150,-30)*+{a_{11}}="15";(-30,-50)*+{a_{19}}="14";%
{\ar@{->}|{A^{-1}_{2,-8}} "2";"9"};{\ar@{->}|{N^{(3)}_{-10}} "9";"3"};{\ar@{->}|{A^{-1}_{2,-10}} "3";"4"};
{\ar@{->}|{N^{(3)}_{-14}A^{-1}_{2,-12}} "4";"15"};{\ar@{-->}|{A^{-1}_{4,-16}} "9";"14"};
{\ar@{-->}|(.3){A^{-1}_{2,-8}}"6";"14"};{\ar@{->}|{N^{(2)}_{-6}} "1";"2"};{\ar@{->}|{N^{(2)}_{-8}} "8";"17"};
{\ar@{->}|{N^{(2)}_{-10}} "17";"20"};{\ar@{->}|(.6){N^{(2)}_{-6}} "2";"8"};{\ar@{->}|(.6){N_{-6}}"16";"17"};
{\ar@{->} |(.6){N^{(2)}_{-6}}"21";"20"};%
{\ar@{-->}|{N^{(2)}_{-6}} "5";"6"};{\ar@{-->}|{N^{(2)}_{-10}} "18";"23"};%
{\ar@{-->}|{N^{(2)}_{-6}} "6";"13"};{\ar@{-->}|{N^{(2)}_{-6}} "18";"19"};{\ar@{->}|{N^{(2)}_{-6}} "23";"22"};%
{\ar@{->} |{A^{-1}_{4,-16}} "1";"5"};{\ar@{-->}|{A^{-1}_{4,-16}} "2";"6"};%
%{\ar@{->}|{N^{(2)}_{-8}} "2";"16"};{\ar@{-->}|{N^{(2)}_{-8}} "6";"18"};
{\ar@{->}|{A^{-1}_{4,-16}} "8";"13"};%
{\ar@{->}|(.6){N^{(2)}_{-10}}"16";"21"};{\ar@{->} |{A^{-1}_{4,-16}} "17";"19"};%
{\ar@{-->}|{A^{-1}_{4,-16}} "16";"18"};{\ar@{->}|{N^{(2)}_{-8}} "13";"19"};%
{\ar@{->}|{A^{-1}_{4,-16}} "21";"23"};{\ar@{->}|{N^{(2)}_{-10}}"19";"22"};%
{\ar@{->}|{A^{-1}_{4,-16}} "20";"22"};{\ar@{->}|{N^{(3)}_{-10}A^{-1}_{3,-14}A^{-1}_{2,-12}A^{-1}_{2,-10}} "14";"15"};%
{\ar@{->}|{N^{(2)}_{-8}A_{2,-8}}"9";"16"};{\ar@{-->}|(.4){N^{(2)}_{-8}A_{2,-8}} "14";"18"};%
\end{xy}}
\caption{The dominant monomial graph for $L(m_{1})\bigotimes L(m_{2})$ ($M=m_{1}m_{2}$ where $m_{1}=1_{-4}1_{-2}1_{0}2_{-7}4_{-14}, m_{2}=1_{-4}1_{-2}1_{0}2_{-11}2_{-9}2_{-7}4_{-18})$.}
\label{The Graph of dominant 9}
\end{figure}

\begin{figure}[H]
\resizebox{.5\width}{.5\height}{
\begin{xy}
(0,0)*+{M}="1";(0,-30)*+{2a_2}="2";(50,-30)*+{a_3}="3";(0,-60)*+{a_4}="4";(50,-60)*+{a_5}="5";(0,-105)*+{a_6}="6";(50,-105)*+{a_7}="7";(-130,-50)*+{a_{28}}="28";%
(10,-20)*+{a_{12}}="12";(10,-50)*+{2a_{13}}="13";(60,-50)*+{a_{14}}="14";(10,-80)*+{a_{15}}="15";(60,-80)*+{a_{16}}="16";(10,-125)*+{a_{17}}="17";(60,-125)*+{a_{18}}="18";%
(-50,-30)*+{a_{8}}="8";(-80,-30)*+{2a_{9}}="9";(-105,-30)*+{2a_{10}}="10";(-140,-30)*+{2a_{11}}="11";(-40,-50)*+{a_{19}}="19";(-30,-70)*+{a_{27}}="27";%
(20,-40)*+{a_{20}}="20";(20,-70)*+{a_{21}}="21";(70,-70)*+{a_{22}}="22";(20,-100)*+{a_{23}}="23";(70,-100)*+{a_{24}}="24";(20,-145)*+{a_{25}}="25";(70,-145)*+{a_{26}}="26";%
{\ar@{->}|{N^{(2)}_{-6}} "1";"2"};{\ar@{->}|{N^{(2)}_{-10}} "4";"6"};%
{\ar@{->}|{N^{(2)}_{-8}} "3";"5"};{\ar@{->}|{N^{(2)}_{-10}} "5";"7"};%
{\ar@{-->}|{N^{(2)}_{-6}} "12";"13"};{\ar@{-->}|{N^{(2)}_{-10}} "15";"17"};%
{\ar@{->}|{N^{(2)}_{-8}} "14";"16"};{\ar@{->}|{N^{(2)}_{-10}} "16";"18"};%
{\ar@{-->}|{N^{(2)}_{-6}} "20";"21"};{\ar@{-->}|{N^{(2)}_{-10}} "23";"25"};%
{\ar@{->}|{N^{(2)}_{-8}} "22";"24"};{\ar@{->}|{N^{(2)}_{-10}} "24";"26"};%
{\ar@{->}|{A^{-1}_{4,-16}} "1";"12"};{\ar@{-->}|(.6){A^{-1}_{4,-20}} "12";"20"};
{\ar@{-->}|{A^{-1}_{4,-16}} "2";"13"};{\ar@{-->}|(.6){A^{-1}_{4,-20}} "13";"21"};
{\ar@{-->}|{A^{-1}_{4,-16}} "4";"15"};{\ar@{-->}|(.6){A^{-1}_{4,-20}} "15";"23"};
{\ar@{->}|{A^{-1}_{4,-16}} "6";"17"};{\ar@{->}|{A^{-1}_{4,-20}} "17";"25"};%
{\ar@{->}|{A^{-1}_{4,-16}} "3";"14"};{\ar@{->}|{A^{-1}_{4,-20}} "14";"22"};
{\ar@{->}|{A^{-1}_{4,-16}} "5";"16"};{\ar@{->}|{A^{-1}_{4,-20}} "16";"24"};
{\ar@{->}|{A^{-1}_{4,-16}} "7";"18"};{\ar@{->}|{A^{-1}_{4,-20}} "18";"26"};%
{\ar@{->}|(.6){A^{-1}_{4,-16}} "2";"3"};{\ar@{->}|(.6){N^{(2)}_{-6}} "4";"5"};{\ar@{->}|(.6){N^{(2)}_{-6}} "6";"7"};
{\ar@{-->}|{N^{(2)}_{-6}} "13";"14"};{\ar@{-->}|{N^{(2)}_{-6}} "15";"16"};{\ar@{->}|{N^{(2)}_{-6}} "17";"18"};
{\ar@{-->}|(.4){N^{(2)}_{-6}} "21";"22"};{\ar@{-->}|(.4){N^{(2)}_{-6}} "23";"24"};
{\ar@{->}|(.4){N^{(2)}_{-6}} "25";"26"};%
{\ar@{->}|{A^{-1}_{2,-8}} "2";"8"};{\ar@{->}|{N^{(3)}_{-10}} "8";"9"};{\ar@{->}|{A^{-1}_{4,-16}} "8";"19"};
{\ar@{->}|{A^{-1}_{2,-10}} "9";"10"};{\ar@{->}|{N^{(3)}_{-14}A^{-1}_{2,-12}} "10";"11"};
{\ar@{->}|{A^{-1}_{4,-20}} "11";"28"};%
{\ar@{->}|{A^{-1}_{4,-20}} "19";"27"};{\ar@{->}|{N^{(3)}_{-10}A^{-1}_{3,-14}A^{-1}_{2,-12}A^{-1}_{2,-10}} "27";"28"};%
{\ar@{-->}|(.4){A^{-1}_{2,-8}} "13";"19"};{\ar@{-->}|(.725){A^{-1}_{2,-8}} "21";"27"};
{\ar@{->}|{N^{(3)}_{-10}A^{-1}_{3,-14}A^{-1}_{2,-12}A^{-1}_{2,-10}} "19";"11"};%
{\ar@{->}|{N^{(2)}_{-8}A_{2,-8}} "8";"4"};{\ar@{->}|(.4){N^{(2)}_{-8}A_{2,-8}} "19";"15"};
{\ar@{->}|(.3){N^{(2)}_{-8}A_{2,-8}} "27";"23"};%
%{\ar@{->}|{N^{(2)}_{-8}} "2";"4"};{\ar@{-->}|{N^{(2)}_{-8}} "13";"15"};{\ar@{-->}|{N^{(2)}_{-8}} "21";"23"};
\end{xy}}
\caption{The dominant monomial graph for $L(m_{1})\bigotimes L(m_{2})$ ($M=m_{1}m_{2}$ where $m_{1}=1_{-4}1_{-2}1_{0}2_{-7}4_{-18}4_{-14}, m_{2}=1_{-4}1_{-2}1_{0}2_{-11}2_{-9}2_{-7}4_{-22}4_{-18})$.}\label{The Graph of dominant 10}
\end{figure}

\begin{figure}[H]
\resizebox{.5\width}{.5\height}{
\begin{xy}
(0,0)*+{M}="1";(0,-30)*+{2a_2}="2";(40,-30)*+{a_3}="3";(0,-60)*+{2a_4}="4";(40,-60)*+{2a_5}="5";(80,-60)*+{a_6}="6";(0,-90)*+{a_7}="7";%
(40,-90)*+{a_{8}}="8";(80,-90)*+{a_9}="9";(0,-120)*+{a_{10}}="10";(40,-120)*+{a_{11}}="11";(80,-120)*+{a_{12}}="12";%
(-30,-30)*+{2a_{13}}="13";(-60,-30)*+{a_{14}}="14";(-90,-30)*+{2a_{15}}="15";(-120,-30)*+{2a_{16}}="16";(-150,-30)*+{a_{17}}="17";%
(-60,-60)*+{a_{18}}="18";(-90,-60)*+{2a_{19}}="19";(-120,-60)*+{2a_{20}}="20";(-150,-60)*+{a_{21}}="21";%
(-60,-90)*+{a_{22}}="22";(-90,-90)*+{2a_{23}}="23";(-120,-90)*+{2a_{24}}="24";(-150,-90)*+{a_{25}}="25";%
(-40,-90)*+{a_{26}}="26";%
(10,-20)*+{a_{27}}="27";(10,-50)*+{2a_{28}}="28";(50,-50)*+{a_{29}}="29";(10,-80)*+{2a_{30}}="30";(50,-80)*+{2a_{31}}="31";(90,-80)*+{a_{32}}="32";(10,-110)*+{a_{33}}="33";%
(50,-110)*+{a_{34}}="34";(90,-110)*+{a_{35}}="35";(10,-140)*+{a_{36}}="36";(50,-140)*+{a_{37}}="37";(90,-140)*+{a_{38}}="38";%
(-20,-50)*+{2a_{39}}="39";(-50,-50)*+{a_{40}}="40";%
(-50,-80)*+{a_{41}}="41";(-50,-110)*+{a_{42}}="42";%
(-30,-110)*+{a_{43}}="43";%
{\ar@{-->}|{A^{(-1)}_{4,-18}} "26";"43"};{\ar@{-->}|(.4){N^{(2)}_{-6}} "41";"42"};%
{\ar@{-->}|{A^{(-1)}_{4,-18}} "18";"41"};{\ar@{->}|{A^{(-1)}_{4,-18}} "22";"42"};%
{\ar@{-->}|{A^{(-1)}_{4,-18}} "13";"39"};{\ar@{-->}|{A^{(-1)}_{4,-18}} "14";"40"};%
{\ar@{->}|{N^{(2)}_{-6}} "1";"2"};{\ar@{->}|(.4){N^{(2)}_{-12}A_{1,-7}} "7";"26"};
{\ar@{-->}|(.32){N^{(2)}_{-12}A_{1,-7}} "33";"43"};{\ar@{->}|{A^{-1}_{1,-7}} "26";"10"};
{\ar@{->}|{A^{-1}_{1,-7}} "43";"36"};%
{\ar@{->}|(.6){N^{(2)}_{-8}}"3";"5"};{\ar@{->} |(.75){N^{(2)}_{-10}}"5";"8"};{\ar@{->} |(.6){N^{(2)}_{-12}}"8";"11"};%
{\ar@{->}|{N^{(2)}_{-10}} "6";"9"};{\ar@{->}|{N^{(2)}_{-12}} "9";"12"};%
{\ar@{->}|{N^{(2)}_{-6}} "2";"3"};{\ar@{->}|{N^{(2)}_{-6}} "4";"5"};{\ar@{->}|{N^{(2)}_{-8}} "5";"6"};
{\ar@{->}|{N^{(2)}_{-6}} "7";"8"};{\ar@{->}|{N^{(2)}_{-8}} "8";"9"};{\ar@{->}|{N^{(2)}_{-6}} "10";"11"};
{\ar@{->}|{N^{(2)}_{-8}} "11";"12"};%
{\ar@{->}|{A^{-1}_{2,-8}} "2";"13"};{\ar@{->}|{A^{-1}_{2,-10}} "13";"14"};{\ar@{->}|{N^{(3)}_{-12}} "14";"15"};
{\ar@{->}|{A^{-1}_{2,-12}} "15";"16"};{\ar@{->}|{N^{(3)}_{-16}A^{-1}_{2,-14}} "16";"17"};%
{\ar@{->}|{A^{-1}_{2,-10}} "4";"18"};{\ar@{->}|{N^{(3)}_{-12}} "18";"19"};
{\ar@{->}|{A^{-1}_{2,-12}} "19";"20"};{\ar@{->}|{N^{(3)}_{-16}A^{-1}_{2,-14}} "20";"21"};%
{\ar@{->}|{N^{(3)}_{-12}} "22";"23"};{\ar@{->}|{A^{-1}_{2,-12}} "23";"24"};
{\ar@{->}|{N^{(3)}_{-16}A^{-1}_{2,-14}} "24";"25"};%
{\ar@{->}|(.7){N^{(2)}_{-6}} "18";"22"};{\ar@{->}|{N^{(2)}_{-6}} "19";"23"};{\ar@{->}|{N^{(2)}_{-6}} "20";"24"};
{\ar@{->}|{N^{(2)}_{-6}} "21";"25"};{\ar@{-->}|(.5){N^{(2)}_{-6}} "27";"28"};{\ar@{-->}|(.5){N_{-8}} "29";"31"};
{\ar@{-->}|(.5){N^{(2)}_{-10}}"31";"34"};{\ar@{-->}|(.5){N^{(2)}_{-12}}"34";"37"};%
{\ar@{->}|(.4){N^{(2)}_{-10}} "32";"35"};{\ar@{->}|(.4){N^{(2)}_{-12}} "35";"38"};%
{\ar@{-->}|{N^{(2)}_{-6}} "28";"29"};{\ar@{-->}|{N^{(2)}_{-6}} "30";"31"};{\ar@{-->}|{N^{(2)}_{-8}} "31";"32"};
{\ar@{-->}|{N^{(2)}_{-6}} "33";"34"};{\ar@{-->}|{N^{(2)}_{-8}} "34";"35"};{\ar@{->}|{N^{(2)}_{-6}} "36";"37"};
{\ar@{->}|{N^{(2)}_{-8}} "37";"38"};%
{\ar@{->}|{A^{(-1)}_{4,-18}} "1";"27"};{\ar@{-->}|{A^{(-1)}_{4,-18}} "2";"28"};{\ar@{->}|{A^{(-1)}_{4,-18}} "3";"29"};%
{\ar@{-->}|(.4){A^{(-1)}_{4,-18}} "4";"30"};{\ar@{-->}|{A^{(-1)}_{4,-18}} "5";"31"};{\ar@{->}|{A^{(-1)}_{4,-18}} "6";"32"};%
{\ar@{-->}|{A^{(-1)}_{4,-18}} "7";"33"};{\ar@{-->}|{A^{(-1)}_{4,-18}} "8";"34"};{\ar@{->}|{A^{(-1)}_{4,-18}} "9";"35"};%
{\ar@{->}|{A^{(-1)}_{4,-18}} "10";"36"};{\ar@{->}|{A^{(-1)}_{4,-18}} "11";"37"};{\ar@{->}|{A^{(-1)}_{4,-18}}"12";"38"};%
{\ar@{-->}|(.4){A^{-1}_{2,-8}}"28";"39"};{\ar@{-->}|{A^{-1}_{2,-10}}"39";"40"};{\ar@{->}|(.7){N^{(2)}_{-8}A_{2,-8}}"14";"18"};
{\ar@{->}|{N^{(2)}_{-8}A_{2,-8}} "15";"19"};{\ar@{->}|{N^{(2)}_{-8}A_{2,-8}} "16";"20"};{\ar@{->}|{N^{(2)}_{-8}A_{2,-8}} "17";"21"};{\ar@{-->}|(.45){N^{(2)}_{-8}A^{-1}_{2,-8}} "40";"41"};{\ar@{-->}|(.6){A^{-1}_{2,-10}} "30";"41"};%
{\ar@{->}|(.4){N^{(2)}_{-10}A_{2,-10}} "18";"7"};{\ar@{-->}|(.6){N^{(2)}_{-10}A_{2,-10}} "41";"33"};
{\ar@{->}|{N^{(2)}_{-8}A_{2,-8}} "13";"4"};{\ar@{-->}|(.45){N^{(2)}_{-8}A_{2,-8}} "39";"30"};%
{\ar@{-->}|(.55){N^{(3)}_{-12}A^{-1}_{3,-16}A^{-1}_{2,-14}A^{-1}_{2,-12}} "40";"17"};%
{\ar@{-->}|(.55){N^{(3)}_{-12}A^{-1}_{3,-16}A^{-1}_{2,-14}A^{-1}_{2,-12}} "41";"21"};%
{\ar@{->}|{N^{(3)}_{-12}A^{-1}_{3,-16}A^{-1}_{2,-14}A^{-1}_{2,-12}} "42";"25"};
%{\ar@{->}|{N^{(2)}_{-8}} "2";"4"};{\ar@{-->}|(.5){N^{(2)}_{-8}} "28";"30"};{\ar@{-->}|(.5){N^{(2)}_{-10}} "30";"33"};{\ar@{->}|{N^{(2)}_{-10}} "4";"7"};{\ar@{->}|(.6){N^{(2)}_{-12}} "7";"10"};{\ar@{-->}|(.5){N^{(2)}_{-12}} "33";"36"};
\end{xy}}
\caption{The dominant monomial graph for $L(m_{1}) \bigotimes L(m_{2})$ ($M=m_{1}m_{2}$ where $m_{1}=1_{-4}1_{-2}1_{0}2_{-9}2_{-7}4_{-16}, m_{2}=1_{-4}1_{-2}1_{0}2_{-13}2_{-11}2_{-9}2_{-7}4_{-20})$.}\label{The Graph of dominant 11}
\end{figure}
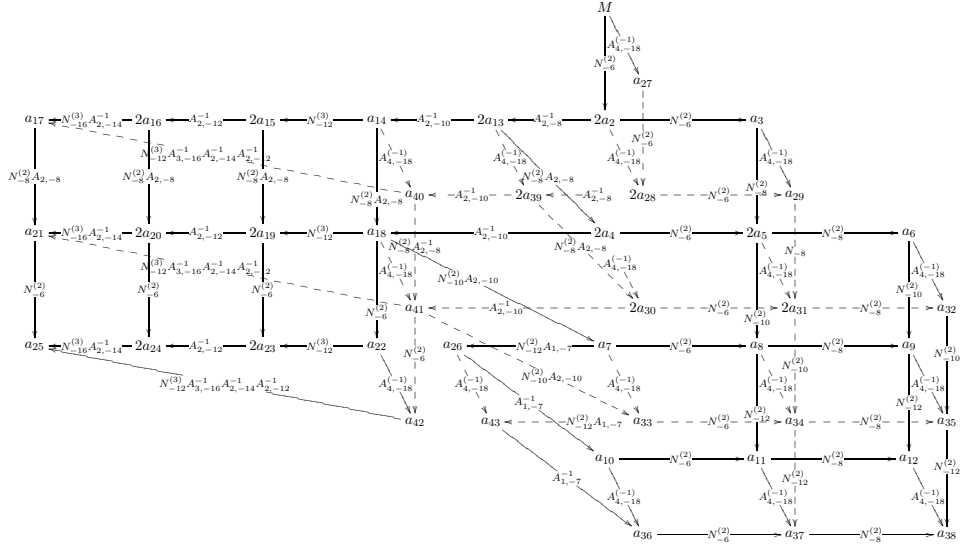

\section*{Acknowledgement}
The authors are supported by the National Natural Science Foundation of China (no. 11371177, 11401275), the Fundamental Research Funds for the Central Universities of China.

\end{document}